\theoremstyle{plain}
\newtheorem{theo}{Theorem}[section]
\newtheorem{lemma}[theo]{Lemma}
\newtheorem{cor}[theo]{Corollary}
\newtheorem*{lemma*}{Lemma}
\newtheorem*{ax*}{Axiom}
\theoremstyle{remark}
\newtheorem{remark}[theo]{Remark}
\newtheorem{definition}[theo]{Definition}
\newcommand{\W}{W(x_{1},\dots,x_{N})}
\newcommand{\Proj}{\mathbb{P}(w_{1},\dots,w_{N})}
\newcommand{\R}{\mathbb{R}}
\newcommand{\C}{\mathbb{C}}
\newcommand{\K}{\kappa_{\mathcal{X}}}
\newcommand{\IsoTo}{\xrightarrow{\,\smash{\raisebox{-0.65ex}{\ensuremath{\scriptstyle\sim}}}\,}}
\newcommand{\w}{\omega_{\text{log}}}
\newcommand{\mult}{\text{age}_\sigma(\mathcal{L})}
\newcommand{\Wmod}{\text{\calligra{W}}_{0,n}^{\quad d}(k_1,\dots, k_n)}
\newcommand{\Wext}{\widetilde{\text{\calligra{L  }}^{\text{  }}}}
\newcommand{\Luni}{\text{\calligra{L}}}
\newcommand{\Ccoarse}{\left| \text{\calligra{C  }} \text{  }\right |}
\newcommand{\parlengths}{\setlength{\parindent}{0pt}}
\begin{document}

\title{Asymptotic Expansion and  the LG/(Fano, General Type) Correspondence}
\author{Pedro Acosta}
\address{Department of Mathematics, The University of Michigan, Ann Arbor, MI 48109, USA}
\email{peacosta@umich.edu}



\begin{abstract}
The celebrated LG/CY correspondence asserts that the Gromov-Witten theory of a Calabi-Yau (CY) hypersurface in weighted projective space is equivalent to its corresponding FJRW-theory
(LG) via analytic continuation. It is well known that this correspondence fails in non-Calabi-Yau cases. The main obstruction is a collapsing or dimensional reduction of the state space of the Landau-Ginzburg model in the Fano case, and a similar collapsing of the state space of Gromov-Witten theory in the general type case. We state and prove a modified version of the cohomological correspondence that describes this collapsing phenomenon at the level of state spaces. This result confirms a physical conjecture of Witten-Hori-Vafa. The main purpose of this article is to provide a quantum explanation for the collapsing phenomenon. A  key observation is that the corresponding Picard-Fuchs equation develops irregular singularities precisely at the points where the collapsing occurs. Our main idea is to replace analytic continuation with asymptotic expansion in this non-Calabi-Yau setting. The main result of this article is that the reduction in rank of the Gromov-Witten $I$-function due to power series asymptotic expansions matches precisely the dimensional reduction of the corresponding state space. Furthermore, asymptotic expansion
under a different asymptotic sequence yields a different $I$-function which can be considered as the mathematical counterpart to the additional "massive vacua" of physics.
\end{abstract}
\maketitle \parlengths

\begingroup
\baselineskip=1pt
\tableofcontents
\endgroup



\section{Introduction}\label{introduction}

The aim of the present work is to study a generalization of the famous Landau-Ginzburg/Calabi-Yau (LG/CY) correspondence to non-Calabi-Yau quotient stacks inside weighted projective spaces. 
The LG/CY correspondence relates the FJRW theory of a quasi-homogeneous polynomial $\W$ of degree $d$ with integer weights $w_{1},\dots, w_{N}$ satisfying $\sum_{i}w_{i}=d$ (CY-condition), and the Gromov-Witten theory of the degree $d$ Calabi-Yau hypersurface $\mathcal{X}_{W}=\{W=0\}$ inside of weighted projective space $\mathbb{P}(w_{1},\dots,w_{N})$. In this paper, we are mainly interested in the non-Calabi-Yau setting $\sum_{i}w_{i}\neq d$. Under this condition, $\mathcal{X}_{W}$ is a Fano hypersurface if $\sum_{i}w_{i}>d$, and a general type hypersurface if $\sum_{i}w_{i} < d$. As observed in \cite{Witten, Hori-Vafa}, in the non-Calabi-Yau setting there is a certain reduction in the dimension of the state spaces (which shall be referred to as \textit{collapsing phenomenon}). More precisely, in the Fano case, when we move from the large complex radius limit to the Landau-Ginzburg point, the dimension of the state space will decrease due to the appearance of certain "massive vacua." In other words, the Gromov-Witten state space will degenerate into a corresponding FJRW state space of smaller dimension. In the case of a general type hypersurface, the role of the Gromov-Witten and FJRW state spaces will be reversed.  In a private communication, Hori explained to us a precise conjecture that allowed us to establish a correspondence at the level of state spaces despite this collapsing phenomenon. In the first part of this paper we verify Hori's conjecture.
 
In the second part of this article, we establish a correspondence at the level of genus zero theories. A great deal of work has already been done in the Calabi-Yau setting. See, for example,  \cite{Chiodo-Ruan2} for the quintic three-fold, \cite{Priddis-Shoemaker} for the mirror quintic, \cite{CIR} for general Calabi-Yau hypersurfaces in weighted projective space, and \cite{Clader} for examples of Calabi-Yau complete intersections. The general strategy in the CY setting is to relate the $I$-functions of the two theories via analytic continuation. The non-CY case, however, presents a seemingly insurmountable obstacle: the Picard-Fuchs operator of the theory develops an irregular singularity at the Landau-Ginzburg point in the Fano case, and at the large complex radius limit in the general type case, making analytic continuation of the $I$-functions impossible. To add to the problem, the $I$-function of the theory at the irregular singularity radius of convergence equal to zero, rendering the function a formal power series. To overcome these obstacles, we appeal to the theory of asymptotic expansions. We show that in this setting, the $I$-functions of the two theories are naturally related via power series asymptotic expansion.
\subsection{Statement of Results}
In order to state the cohomological correspondence, we introduce some notation. Let $\W$ be a \textit{quasi-homogeneous} polynomial of degree $d$ and integer weights $w_{1},\dots, w_{N}$, i.e. the weights of $W$ satisfy
\begin{equation}\label{quasi-homogenous}
W(\lambda^{w_{1}}x_{1},\dots,\lambda^{w_{N}}x_{N})=\lambda^d\W
\end{equation}
for all $\lambda\in \C^{\ast}$. We assume throughout that $\text{gcd}(w_{1},\dots,w_{N})=1$ and that $W$ has a unique singularity at the origin. Then, $\mathcal{X}_{W}:=\{W=0\}$ defines a smooth hypersurface inside of the weighted projective stack $\mathbb{P}(w_{1},\dots,w_{N})$, i.e. $\mathcal{X}_{W}$ is a Deligne-Mumford stack. Define the \textit{canonical bundle index} of $\mathcal{X}_W$ to be
\begin{equation}
\K:=d-\sum_{j=1}^N w_j.
\end{equation}
When $\mathcal{X}_W$ is Fano, we define its \textit{Fano index} to be $r:=-\K$. This matches the notation of \cite{Iritani}.

Let $\text{Aut}(W)\subset (\C^{\ast})^N$ be the group of diagonal symmetries that leaves $W$ invariant under rescaling of the coordinates:
\begin{equation}
W(\alpha_{1}x_{1},\dots,\alpha_{N}x_{N})=W(x_{1},\dots,x_{N}),
\end{equation}
for all $(x_{1},\dots,x_{N})\in \C^N$. Because of the quasi-homogeneous condition (\ref{quasi-homogenous}), the group $\text{Aut}(W)$ always contains the element 
\begin{equation}\label{J}
J_{W}:=(\text{exp}(2\pi i w_{1}/d),\dots,\text{exp}(2\pi i w_{N}/d)).
\end{equation}
When there is no possibility of confusion, we denote $J_W$ simply by $J$ to simplify the notation. Let $G$ be a subgroup of $\text{Aut}(W)$ containing $J_{W}$ and define $\widetilde{G}:=G/ \langle J_{W}\rangle$.

Chiodo and Ruan showed in \cite{Chiodo-Ruan} that if the weights of $W$ satisfy the Calabi-Yau condition $\sum_{i=1}^{N}w_{i} = d$, then the Chen-Ruan cohomology of the Calabi-Yau quotient stack $[\mathcal{X}_{W}/\widetilde{G}]$ is isomorphic to the FJRW state space of the pair $(W, G)$, i.e. we have a bi-degree preserving isomorphism
\[
H_{\text{CR}}^{p,q}([\mathcal{X}_{W}/\widetilde{G}];\C)\cong H_{FJRW}^{p,q}(W,G).
\]

(Detailed definitions of these spaces will be given section \ref{correspondence}.)

In the non-Calabi-Yau case, however, such an isomorphism is impossible since the dimensions of the states spaces are different. To rectify this phenomenon, we need to add certain additional classes to FJRW theory in the Fano case, and to Gromov-Witten theory in the general type case.
Even with these additions,  the isomorphism of state spaces preserves only a difference of gradings and not a bi-grading like in the Calabi-Yau case. To state the results of the paper in more detail, we distinguish between the following cases:

\textbf{Fano case:} $\K<0$.\\
Define the \textit{modified Chen-Ruan state space} to be
\begin{equation}\label{modified-Chen-Ruan-state-space-Fano}
\mathcal{A}_{\text{CR}}^{n}([\mathcal{X}_{W}/\widetilde{G}];\C):=\bigoplus_{p-q=n}H_{\text{CR}}^{p,q}([\mathcal{X}_{W}/\widetilde{G}];\C)
\end{equation}
for all $n\in \mathbb{Z}$, and the \textit{modified FJRW state space} to be
\begin{equation}\label{modified-FJRW-state-space-Fano} 
 \mathcal{A}_{\text{FJRW}}^{n}(W,G) :=
  \begin{cases}
    \bigoplus_{p}H_{\text{FJRW}}^{p,p}(W,G) \oplus\C^{\frac{|G|}{d}r} & \text{if } n=0 \\
	\\
   \bigoplus_{p-q=n}H_{\text{FJRW}}^{p,q}(W,G)       & \text{otherwise.}
  \end{cases}
\end{equation}
Note that since $\K<0$ and $|\widetilde{G}|=|G|/d$, we have that $\frac{|G|}{d}r$ is a positive integer.

\textbf{General type case:}   $\K>0$.\\
In this case, the modified state spaces are defined to be
\begin{equation}\label{modified-Chen-Ruan-state-space-gt}
\mathcal{A}_{\text{CR}}^{n}([\mathcal{X}_{W}/\widetilde{G}];\C):=
\begin{cases}
    \bigoplus_{p}H_{\text{CR}}^{p,p}([\mathcal{X}_{W}/\widetilde{G}];\C) \oplus\C^{\frac{|G|}{d}\K} & \text{if } n=0 \\
	\\
   \bigoplus_{p-q=n}H_{\text{CR}}^{p,q}([\mathcal{X}_{W}/\widetilde{G}];\C)       & \text{otherwise}
  \end{cases}
\end{equation}
and
\\
\begin{equation}\label{modified-FJRW-state-space-gt} 
 \mathcal{A}_{\text{FJRW}}^{n}(W,G) := \bigoplus_{p-q=n}H_{\text{FJRW}}^{p,q}(W,G) 
\end{equation}
for all $n\in \mathbb{Z}$.

The modified cohomological correspondence asserts that we have a degree preserving isomorphism between the modified state spaces described above:
\begin{theo}[Cohomological Correspondence]\label{cohomological-correspondence}
Let $\W$ be a non-degenerate quasi-homogeneous polynomial of degree $d$ and integer weights $w_{1},\dots, w_{N}$, satisfying the condition $\K\neq0$. Let $G$ be a group of diagonal symmetries of $W$ containing the element $J_{W}$ defined in $(\ref{J})$. Then, we have a graded isomorphism of $\C$-vector spaces
\begin{equation}\label{correspondence-equation}
\mathcal{A}_{\text{CR}}^{n}([\mathcal{X}_{W}/\widetilde{G}];\C) \cong   \mathcal{A}_{\text{FJRW}}^{n}(W,G) \quad\text{for all $n\in\mathbb{Z}$,}
\end{equation} 

where $[\mathcal{X}_{W}/\widetilde{G}]$ is the Deligne-Mumford stack defined in $(\ref{stack})$. This isomorphism is canonical for $n\neq 0$.
\end{theo}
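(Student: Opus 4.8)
The plan is to reduce the statement to a comparison of bigraded Poincaré polynomials, adapting the sector-by-sector argument of Chiodo--Ruan \cite{Chiodo-Ruan} to the regime $\K\neq 0$, and then to show that dropping the Calabi--Yau condition disturbs only the \emph{balanced} part of the two state spaces, i.e. the classes of Hodge type $(p,p)$, which contribute exactly to the summand $n=p-q=0$. Before taking the $p-q$ grading, both $\mathcal{A}_{\text{CR}}^{\bullet}$ and $\mathcal{A}_{\text{FJRW}}^{\bullet}$ carry a natural bigrading together with a decomposition indexed by group elements: the Chen--Ruan cohomology of $[\mathcal{X}_{W}/\widetilde{G}]$ decomposes over the twisted sectors of its inertia stack, and the FJRW state space decomposes over $g\in G$ into the $G$-invariants of the $W_{g}$-twisted local cohomology of $\mathrm{Fix}(g)$. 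First I would record these decompositions together with their age and degree shifts, so that the isomorphism (\ref{correspondence-equation}) becomes an identity of two explicit bigraded generating functions.

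Next I would run the Chiodo--Ruan matching of sectors \emph{without} assuming $\K=0$. For each $g$ the broad part of the corresponding sector is governed by the Jacobian (Milnor) ring of the restriction of $W$ to $\mathrm{Fix}(g)$, whose Hilbert series is the standard product $\prod_{i}(1-t^{\,d-w_{i}})/(1-t^{\,w_{i}})$; via the Griffiths residue description of the Hodge structure on primitive cohomology, these pieces are matched with the twisted sectors of $H_{\mathrm{CR}}^{\bullet}([\mathcal{X}_{W}/\widetilde{G}])$. The only role of the Calabi--Yau hypothesis in \cite{Chiodo-Ruan} is in the numerology of the degree shifts; dropping it, the matched pairs no longer share their individual Hodge bidegrees, but a direct computation of the shifts shows that the \emph{difference} $p-q$ is still preserved on every matched pair. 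Equivalently, specializing the two bigraded generating functions to the single variable recording $p-q$, the broad/primitive contributions cancel identically. I expect most of the labor of the proof to be concentrated in this bookkeeping of shifts, but it introduces no genuinely new phenomenon.

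The heart of the matter is the residual mismatch between the \emph{narrow} FJRW generators (those $g\in G$ fixing only the origin) and the \emph{ambient} hyperplane classes of $H_{\mathrm{CR}}^{\bullet}$; both families are balanced, of type $(p,p)$, and so live entirely in $n=0$. I would count them directly. Comparing the Hilbert series above against the Poincaré polynomial of the ambient cohomology of $\mathcal{X}_{W}$ (and its twisted-sector analogues), the net discrepancy in the degree-zero piece equals $\tfrac{|G|}{d}\,r$ extra balanced Chen--Ruan classes in the Fano case $\K<0$, and symmetrically $\tfrac{|G|}{d}\,\K$ extra narrow FJRW generators in the general type case $\K>0$; here the factor $\tfrac{|G|}{d}=|\widetilde{G}|$ counts the twisted sectors over which the discrepancy is replicated. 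Granting this count, the correction summands $\C^{\frac{|G|}{d}r}$ and $\C^{\frac{|G|}{d}\K}$ introduced in (\ref{modified-FJRW-state-space-Fano}) and (\ref{modified-Chen-Ruan-state-space-gt}) restore the equality of dimensions in degree $0$, so that (\ref{correspondence-equation}) holds for every $n$.

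The hard part will be precisely this localization and counting of the balanced classes: one must show that \emph{all} of the excess is of type $(p,p)$ — so that the correction is forced into $n=0$ and affects no other degree — and that its dimension is the asserted value uniformly in $W$ and $G$, which is where the Fano/general-type dichotomy and the sign of $\K$ enter. Finally, for $n\neq 0$ the isomorphism is realized by the explicit sector maps produced in the matching step and is therefore canonical, whereas for $n=0$ it is only an equality of dimensions involving the correction summands and hence non-canonical, in agreement with the last sentence of the statement.
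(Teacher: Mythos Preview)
Your proposal is correct and follows essentially the same route as the paper: decompose both state spaces over the cosets $g^{(i)}\langle J_W\rangle$ (resp.\ $g^{(i)}\C^{\ast}$), match the broad FJRW sectors with the primitive cohomology pieces of the twisted Chen--Ruan sectors while observing that the age shifts cancel in the $p-q$ grading, and then count the residual discrepancy between narrow sectors and ambient hyperplane classes, both of which are of type $(p,p)$. The only real difference is organizational: where you propose to track the count via Hilbert series and bigraded generating functions, the paper packages the same data into a combinatorial rays-and-dots diagram (after Boissi\`{e}re--Mann--Perroni and Chiodo--Ruan), in which internal dots count ambient classes, empty rays count narrow sectors, and the key identity $D-R=-\K$ per coset drops out of the diagram's construction. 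Your Hilbert-series computation and the paper's diagrammatic bookkeeping are equivalent devices; the diagram has the advantage of making the per-coset count $-\K$ immediate and of handling the non-Gorenstein sectors (where $g\bar\nu\notin G$ and $\{W_{g\bar\nu}=0\}$ is the full projective space) transparently via an extra extremal dot, a case you should make sure your generating-function argument covers.
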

The above theorem confirms the conjecture communicated to us by Hori. It is important to remark that to establish the correspondence at the level of state spaces, the Gorenstein condition is not necessary. Example \ref{non-gorenstein} illustrates the cohomological correspondence for a non-Gorenstein case.

In the second half of this paper we state a quantum correspondence, with three important restrictions. First, we assume that the group $G$ is generated by $J_{W}$, i.e. $G=\langle J_{W}\rangle$. We also assume that the hypersurface $\mathcal{X}_W$ is Gorenstein. This is equivalent to the condition $w_j | d$ for $j=1,\dots,N$. Lastly, we restrict to ambient cohomology classes in Gromov-Witten theory and to narrow sectors in FJRW theory (see Definition \ref{narrow-sectors}.) 

The genus zero part of both, Gromov-Witten theory and FJRW theory, is completely determined by their \textit{Givental $J$-functions}. The definition of the $J$-function in Gromov-Witten theory is well known (see, for example, \cite{Givental}). For the case at hand, the $J$-function can be computed via the Givental mirror theorem using the Gromov-Witten $I$-function of $\mathcal{X}_W$:
\begin{equation}
I_{\text{GW}}^{small}(q,z):=zq^{P/z}\sum_{\substack{n\in\mathbb{Q}_{\geq 0}\\ \exists j:\text{ } nw_j\in \mathbb{Z}}} q^n\frac{\prod_{\substack{0<b\leq nd \\ \langle b\rangle=\langle nd\rangle}} (dP+bz)}{\prod_{j=1}^N\prod_{\substack{0<b\leq nw_j \\ \langle b\rangle=\langle nw_j\rangle}} (w_jP+bz)}\textbf{1}_{\langle -n\rangle},
\end{equation}
which was computed in \cite{CCLT}. Here, $\langle b\rangle:=b-\lfloor b\rfloor$ is the fractional part of $b$. This $I$-function represents a complete set of solutions to the irreducible component of the following Picard-Fuchs equation:
\begin{equation}\label{PF-equation}
\left [\prod_{j=1}^N\prod_{c=0}^{w_j-1}(w_jzD_q-cz)-q\prod_{c=1}^d(dzD_q+cz)\right ]I_{\text{GW}}^{small}(q,z)=0,\quad\text{where $D_q:=q\frac{d}{dq}$.}
\end{equation}
A simple analysis reveals that if $\K<0$, this differential operator has an irregular singularity at $q=\infty$, and that if $\K>0$, the operator has an irregular singular point at $q=0$, rendering $I_{\text{GW}}^{small}(q,z)$ a formal solution at $q=0$. We are thus presented with a dichotomy similar to the one observed in the correspondence of state spaces. 

On the Landau-Ginzburg side, let $\{\phi_0,\dots,\phi_s\}$ be a basis for the narrow part of FJRW theory (see Definition \ref{narrow-sectors}) and $\{\phi^0,\dots, \phi^s\}$ its dual basis with respect to the FJRW pairing defined in Equation (\ref{grading}). 
The \textit{big FJRW $J$-function} is defined to be
\begin{equation}
J_{\text{FJRW}}\left(\textbf{t}=\sum_{i=0}^s t_0^i\phi_i,z\right):=z\phi_0+\sum_{h} t_0^h\phi_h+\sum_{\substack{n\geq 0\\ k\geq 0}}\text{ }\sum_{h_1,\dots,h_n}\sum_{\epsilon}\frac{t_{0}^{h_1}\dots t_{0}^{h_n}}{n!z^{k+1}} \langle \phi_{h_1},\dots, \phi_{h_n},\tau_k(\phi_{\epsilon})\rangle_{0,n+1}^{\text{FJRW}} \phi^{\epsilon},
\end{equation}
where $\langle \phi_{h_1},\dots, \phi_{h_n},\tau_k(\phi_{\epsilon})\rangle_{0,n+1}^{\text{FJRW}}$ are the genus zero invariants of FJRW theory, defined in Equation (\ref{invariants}). The \textit{big $I$-function} for the narrow part of FJRW theory is given by
\begin{equation}
I_{\text{FJRW}}(\textbf{t},z):=z\sum_{ \substack{k_i\geq 0:i\in\textbf{Nar} \\ \widetilde{h}(\textbf{k})\in\textbf{Nar} }}\prod_{i}\frac{(t_0^{i})^{k_i}}{z^{k_i}k_i!} \left(\prod_{j=1}^N \prod_{\substack{0 \leq b <q_j\sum_i ik_i\\ \langle b\rangle = \langle q_j\sum_i ik_i\rangle}} (q_j +b)z \right)\phi_{\widetilde{h}(\textbf{k})},
\end{equation}
where $\widetilde{h}(\textbf{k}):=d\left\langle\frac{\sum_i ik_i}{d}\right\rangle$ and the set $\textbf{Nar}$ is defined in Equation (\ref{narrow-set}). The FJRW $I$-function can be related to the FJRW $J$-function by the following Coates-Givental-style mirror theorem:
\begin{theo}[FJRW Mirror Theorem]\label{FJRW-mirror} The FJRW $J$-function can be uniquely determined from the FJRW $I$-function and its derivatives. To be more explicit, we have the following relation:
\begin{equation}\label{mirror-equation}
J_{\text{FJRW}}(\boldsymbol{\tau},-z)=I_{\text{FJRW}}(\textbf{t},-z)+\sum_{i\in\textbf{Nar}} c_i(t,z)z\frac{\partial I_{\text{FJRW}}}{\partial t_0^i}(\textbf{t},-z),
\end{equation}
where $c_i(\textbf{t},z)$ is a formal power series in $\textbf{t}$ and $z$, for $i\in\textbf{Nar}$, and $\boldsymbol{\tau}(\textbf{t})$ is determined by the $z^0$-mode of the right-hand-side of Equation $(\ref{mirror-equation})$.
\end{theo}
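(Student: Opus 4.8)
The plan is to run the entire argument inside Givental's symplectic formalism and to realize the theorem as a Birkhoff factorization on the genus-zero FJRW Lagrangian cone. Let $\mathcal{H}$ denote the Givental space attached to the narrow FJRW state space, equipped with the symplectic form induced by the FJRW pairing and with its standard polarization $\mathcal{H}=\mathcal{H}_+\oplus\mathcal{H}_-$ into the parts regular in $z$ and in $z^{-1}$. Because FJRW theory is a cohomological field theory (Fan-Jarvis-Ruan), the string, dilaton and WDVV relations assemble its genus-zero descendant invariants into an \emph{overruled} Lagrangian cone $\mathcal{L}_{\text{FJRW}}\subset\mathcal{H}$, whose tangent spaces $T$ are preserved by multiplication by $z$ and are tangent to $\mathcal{L}_{\text{FJRW}}$ precisely along $zT$ \cite{Givental}. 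In this language $J_{\text{FJRW}}(\boldsymbol{\tau},-z)$ is nothing but the canonical parametrization of the cone: it is the unique family on $\mathcal{L}_{\text{FJRW}}$ in the normal form $-z\phi_0+\boldsymbol{\tau}+O(z^{-1})$. The theorem will therefore follow once I (i) prove that $I_{\text{FJRW}}(\textbf{t},-z)$ lies on $\mathcal{L}_{\text{FJRW}}$, and (ii) bring this point into normal form along the ruling.

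Step (i) carries the essential content and is the step I expect to be the main obstacle. I would reduce it, via Givental's characterization of points of the cone, to the explicit evaluation of the genus-zero narrow FJRW invariants, and then compute those invariants. The decisive simplification is that for narrow insertions the Witten class is the Euler class of the Witten bundle $R^1\pi_*\mathcal{L}$: concavity forces $R^0\pi_*\mathcal{L}=0$, so the virtual class reduces to this top Chern class, and the $n$-point invariants become integrals of products of its Euler class with $\psi$-classes over the moduli of $W$-curves. Expanding the Chern character of the Witten bundle by orbifold Grothendieck-Riemann-Roch and resumming should reproduce exactly the hypergeometric factors
\[
\prod_{j=1}^N \prod_{\substack{0 \leq b < q_j\sum_i ik_i \\ \langle b\rangle = \langle q_j\sum_i ik_i\rangle}} (q_j +b)z
\]
that appear in $I_{\text{FJRW}}$, identifying $I_{\text{FJRW}}(\textbf{t},-z)$ as a family of points on $\mathcal{L}_{\text{FJRW}}$. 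A parallel route, which I would use as a consistency check, is to verify that each component of $I_{\text{FJRW}}$ solves the same GKZ/Picard-Fuchs system that governs the tangent spaces of $\mathcal{L}_{\text{FJRW}}$ and has the correct $z$-leading term; this replaces the moduli integrals by a differential-equation computation but leaves the same combinatorial matching as the genuine difficulty.

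Granting step (i), the remainder is formal. Differentiating the family in its parameters gives tangent vectors $\partial_{t_0^i}I_{\text{FJRW}}(\textbf{t},-z)\in T_{I}\mathcal{L}_{\text{FJRW}}$, and I would first check that the vectors $\{\partial_{t_0^i}I_{\text{FJRW}}\}_{i\in\textbf{Nar}}$ generate $T_{I}\mathcal{L}_{\text{FJRW}}$ over the completion $\mathbb{C}[[z]]$ of $\mathbb{C}[z]$. This holds because the $k_i=1$ terms of $I_{\text{FJRW}}$ reproduce the basis directions $\phi_i$ of the narrow state space, so the family moves in every coordinate direction. Since the cone is overruled, the ruling subspace $z\,T_{I}\mathcal{L}_{\text{FJRW}}\subset\mathcal{L}_{\text{FJRW}}$ is then spanned over $\mathbb{C}[[z]]$ by $\{z\,\partial_{t_0^i}I_{\text{FJRW}}\}_{i\in\textbf{Nar}}$, and it contains both $I_{\text{FJRW}}(\textbf{t},-z)$ and the normal-form point $J_{\text{FJRW}}(\boldsymbol{\tau},-z)$ of that same ruling, where $\boldsymbol{\tau}=\boldsymbol{\tau}(\textbf{t})$ is determined by the ruling. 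Writing
\[
J_{\text{FJRW}}(\boldsymbol{\tau},-z)=I_{\text{FJRW}}(\textbf{t},-z)+\sum_{i\in\textbf{Nar}}c_i(\textbf{t},z)\,z\frac{\partial I_{\text{FJRW}}}{\partial t_0^i}(\textbf{t},-z),
\]
I determine the $c_i$ uniquely by demanding that the right-hand side be in normal form, i.e. that all powers $z^{\ge 1}$ beyond the dilaton term $-z\phi_0$ cancel. Because $I_{\text{FJRW}}$ is a formal power series in $\textbf{t}$ whose positive $z$-powers at each fixed $\textbf{t}$-order are finite in number, this cancellation can be performed order by order in $\textbf{t}$ and terminates at each order, producing the $c_i(\textbf{t},z)$ as formal power series in $\textbf{t}$ and $z$; the surviving $z^0$-mode is then read off as $\boldsymbol{\tau}(\textbf{t})$. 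Uniqueness of the whole procedure is just uniqueness of the Birkhoff splitting $\mathcal{H}=\mathcal{H}_+\oplus\mathcal{H}_-$, which simultaneously yields the asserted uniqueness of the mirror map $\boldsymbol{\tau}(\textbf{t})$.
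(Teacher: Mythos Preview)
Your overall architecture is correct and matches the paper's: both arguments live in Givental's symplectic formalism, both identify the theorem as placing $I_{\text{FJRW}}(\mathbf{t},-z)$ on the overruled cone $\mathcal{L}_{\text{FJRW}}$ and then performing a Birkhoff-type normalization along the ruling. Your step~(ii) is essentially identical to what the paper does; the paper simply invokes Coates--Givental (Corollary~5 of \cite{Coates-Givental}) for the normal-form step rather than rederiving it.

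The substantive difference is in step~(i). You propose to show $I_{\text{FJRW}}$ lies on $\mathcal{L}_{\text{FJRW}}$ by a direct evaluation of the narrow genus-zero invariants via concavity and orbifold GRR, then ``resumming'' to recover the hypergeometric factors. The paper instead takes the twisted-theory route: it introduces an \emph{extended} state space and a universal $c_{\mathbf{s}}$-twisted theory, computes the untwisted $J$-function explicitly (these invariants are trivial Hodge integrals on $\overline{\mathcal{M}}_{0,n}$), modifies it by an operator built from the function $G_0$ to produce another family on the untwisted cone, and then applies the symplectic transformation $\triangle$ of quantum Riemann--Roch type, which carries $\mathcal{L}^{\mathrm{un}}$ onto $\mathcal{L}^{\mathrm{tw},\mathbf{s}}$. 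The image is the twisted $I$-function, and specializing $\mathbf{s}$ to the equivariant Euler class and taking $\lambda\to 0$ yields $I_{\text{FJRW}}$ on $\mathcal{L}_{\text{FJRW}}$. The GRR input you invoke is present, but it is packaged entirely inside the proof that $\triangle(\mathcal{L}^{\mathrm{un}})=\mathcal{L}^{\mathrm{tw},\mathbf{s}}$ (cited from \cite{Chiodo-Ruan2}), not used to compute individual invariants. This buys the paper a clean, modular argument in which the hypergeometric modification factor $M_{\mathbf{k}}(z)$ drops out of a short computation with $G_0$ and Bernoulli polynomials, and the passage through the extended state space handles the non-narrow directions uniformly before the final specialization kills them. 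Your direct route is in principle viable, but as written the phrase ``resumming should reproduce exactly the hypergeometric factors'' is where all the work hides; making it rigorous would effectively rebuild the $\triangle$-operator argument from scratch.
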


By restricting the FJRW $I$-function to the slice $t_0^i=0$ for $i\neq 1$, we obtain the \textit{small FJRW $I$-function}, which is defined as follows:

\begin{equation}
I_{\text{FJRW}}^{small}(t,z):=z\sum_{k\in \textbf{Nar}}\sum_{l=0}^\infty \frac{t^{dl+k+1}}{z^{dl+k}(dl+k)!}\frac{\prod_{j=1}^N z^{\lfloor q_j(dl+k) \rfloor}\Gamma(q_j(dl+k+1))}{\Gamma(q_j+\langle q_jk\rangle)}  \phi_k.
\end{equation}

In the Calabi-Yau case, $I_{\text{FJRW}}^{small}(t,z)$ and $I_{\text{GW}}^{small}(q,z)$ have the same rank (as a consequence of state space isomorphism) and are related by analytic continuation.
In the non-Calabi-Yau case,  $I_{\text{FJRW}}^{small}(t,z)$ and $I_{\text{GW}}^{small}(q,z)$ have different ranks as cohomology-valued functions. Consequently, it is not precisely clear how to relate them. In terms of the Picard-Fuchs equation, we have two diametrically opposing cases which depend on the sign of the canonical bundle index $\K$. If $\K>0$, $I_{\text{FJRW}}^{small}(t,-z)$ converges for all $t$ and it represents a complete set of solutions to the irreducible component of the Picard-Fuchs operator of Equation (\ref{PF-equation}) at $q=\infty$, where we have used the change of variables $q=t^{-d}$. On the other hand, if $\K<0$, this series diverges for $t\neq 0$ and it represents a formal solution to the Picard-Fuchs operator at $q=\infty$. Therefore, we conclude that the genus zero correspondence must be divided into two cases, which we describe now.

\textbf{Fano case:} $\K<0$.\\
As explained above, in this case the Picard-Fuchs operator has an irregular singular point at $q=\infty$. A simple anlysis also shows that $I_{\text{GW}}^{small}(q,1)$ converges absolutely for all $q\neq \infty$ and it develops an essential singularity at $q=\infty$. As a consequence, we are not able to analytically continue this function to $q=\infty$ as in the Calabi-Yau case. We propose to replace analytic continuation with power series asymptotic expansion. At the same time, $I_{\text{FJRW}}^{small}(t,-1)$ has smaller rank than $I_{\text{GW}}^{small}(q,1)$ as cohomology-valued functions. Intuitively, our theorem asserts that the asymptotic expansion of $I_{\text{GW}}^{small}(q,z)$  collapses its rank to match the rank of $I_{\text{FJRW}}^{small}(t,-1)$.
More precisely, 

\begin{theo}[Genus zero LG/Fano Correspondence]\label{fano-genus-zero-correspondence}
If $\K<0$, there exists a unique linear transformation $L_{\text{GW}}:H_{\text{CR}}^{amb}(\mathcal{X}_W;\C) \longrightarrow H_{\text{FJRW}}^{nar}\left(W\right)$ of rank equal to the dimension of $ H_{\text{FJRW}}^{nar}\left(W\right)$ such that
\[
L_{\text{GW}}\cdot I_{\text{GW}}^{small}(q,1)\sim I_{\text{FJRW}}^{small}(t=q^{-\frac{1}{d}},-1)
\]
as $q\rightarrow\infty$ from some suitable sector of the complex plane.
\end{theo}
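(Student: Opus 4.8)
The plan is to read the statement not as a relation between two unrelated power series but as a \emph{connection and asymptotics problem} for the single scalar operator in Equation (\ref{PF-equation}). Both $I_{\text{GW}}^{small}(q,1)$ and $I_{\text{FJRW}}^{small}(q^{-1/d},-1)$ are, component by component, solutions of the $z=1$ specialization of the relevant irreducible component of that operator: the first by the Givental/CCLT computation for finite $q$, the second as a \emph{formal} solution at $q=\infty$ (this is exactly the content, recalled just before the theorem, that $I_{\text{FJRW}}^{small}(t,-z)$ solves the operator at $q=\infty$ after the change of variables $q=t^{-d}$). I would first analyze the local structure of this operator at $q=\infty$. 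Its order equals the rank $\dim H_{\text{CR}}^{amb}(\mathcal{X}_W;\C)$, while the term carrying the extra factor of $q$ has strictly smaller order; since $\K<0$ the Newton polygon at $q=\infty$ has a genuine positive-slope segment. A Newton-polygon plus WKB analysis then shows that the space of formal solutions splits as $V_0\oplus V_{\mathrm{wild}}$, where $V_0$ is the slope-$0$ part consisting of algebraic solutions $q^{\alpha}\times(\text{power series in }q^{-1/d})$ and $V_{\mathrm{wild}}$ is spanned by solutions carrying exponential factors $\exp(c\,q^{1/r})$ with $r=-\K$. I would identify $\dim V_0=\dim H_{\text{FJRW}}^{nar}(W)$ and $\dim V_{\mathrm{wild}}=\dim H_{\text{CR}}^{amb}-\dim H_{\text{FJRW}}^{nar}$, the latter being precisely the number $\tfrac{|G|}{d}r$ of massive vacua from Theorem \ref{cohomological-correspondence}; the basis of $V_0$ is matched to the components $\phi_k$, $k\in\textbf{Nar}$, by checking that the indicial exponents $\alpha$ reproduce the $\textbf{Nar}$ indexing.

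Second, I would produce a Mellin--Barnes integral representation for each Chen--Ruan component of $I_{\text{GW}}^{small}(q,1)$, exploiting the product-of-$\Gamma$-functions form of the hypergeometric coefficients (equivalently, recognizing the components as Meijer $G$-functions). Closing the contour in one direction and summing residues recovers the convergent series defining $I_{\text{GW}}^{small}$ for finite $q$; closing it in the opposite direction and invoking the classical asymptotic theory of such integrals (Barnes, Braaksma) produces the large-$q$ expansion. This expansion separates into an algebraic part lying in $V_0$ and a saddle-point/exponential part reproducing $V_{\mathrm{wild}}$. The decisive analytic point is to fix a sector of the $q$-plane, bounded by the anti-Stokes rays of the exponentials $\exp(c\,q^{1/r})$, in which every such exponential is recessive; in that sector the wild contributions are subdominant \emph{to all orders}, so the Poincar\'e asymptotic power series of $I_{\text{GW}}^{small}(q,1)$ lands entirely in $V_0$. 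This is the quantitative mechanism of the collapsing: the essential singularity of $I_{\text{GW}}^{small}$ at $q=\infty$ is invisible to the Poincar\'e expansion, which therefore only sees the $\dim H_{\text{FJRW}}^{nar}$-dimensional algebraic content.

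Third, I would define $L_{\text{GW}}$ to be the linear map that, applied to the $H_{\text{CR}}^{amb}$-valued function $I_{\text{GW}}^{small}(q,1)$, regroups its components into the narrow FJRW basis so that the resulting series agrees termwise with $I_{\text{FJRW}}^{small}(q^{-1/d},-1)$; its matrix entries are the ratios of $\Gamma$-factors read off from the residue computation, collapsed using the reflection and Gauss multiplication formulas (this is also where the specialization $z=1$ versus $z=-1$ is absorbed). Uniqueness is then automatic: a Poincar\'e asymptotic expansion in a fixed sector is unique and $I_{\text{FJRW}}^{small}$ is a fixed formal series, so any two admissible maps agree on the span of the components of $I_{\text{GW}}^{small}$, which is all of $H_{\text{CR}}^{amb}$. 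The rank assertion $\operatorname{rank}L_{\text{GW}}=\dim H_{\text{FJRW}}^{nar}(W)$ follows because the image is exactly $V_0$ while the kernel is the subspace of $H_{\text{CR}}^{amb}$ whose $I_{\text{GW}}^{small}$-image is asymptotically governed by $V_{\mathrm{wild}}$; this kernel has dimension $\tfrac{|G|}{d}r$, matching the massive vacua and hence the collapsing of Theorem \ref{cohomological-correspondence}.

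The hardest step is the second one. Upgrading the formal residue/saddle bookkeeping into rigorous Poincar\'e asymptotics with \emph{uniform} remainder estimates valid on a full sector, and in particular pinning down the Stokes geometry precisely enough to guarantee that the chosen sector suppresses all of $V_{\mathrm{wild}}$ simultaneously, is where the real analytic work lies; handling the cohomology-valued (as opposed to scalar) nature of the expansion and the fractional exponents $q^{1/r}$ adds bookkeeping. By contrast, the $\Gamma$-function identities identifying the algebraic part with $I_{\text{FJRW}}^{small}$, and the linear algebra giving the rank and uniqueness of $L_{\text{GW}}$, are comparatively mechanical once the sectorial asymptotic expansion is in hand.
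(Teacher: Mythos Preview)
Your proposal takes a genuinely different route from the paper, and the strategy contains a real gap at the step you yourself flag as hardest.

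The paper does \emph{not} attempt to expand $I_{\text{GW}}^{small}(q,1)$ asymptotically at $q=\infty$. It works from the FJRW side via Borel summation: it defines a \emph{regularized} series $I_{\text{FJRW}}^{reg}(\tau)$ by dividing the coefficients of the divergent FJRW $I$-function by $\Gamma\bigl(1+r(dl+k+1)/d\bigr)$, checks that this converges and solves a regularized Picard--Fuchs equation with only \emph{regular} singular points (so it analytically continues along any ray avoiding finitely many points), and then sets $\mathbb{I}_{\text{FJRW}}(u):=u\,\mathcal{L}(I_{\text{FJRW}}^{reg})(u)$. Watson's Lemma immediately gives $\mathbb{I}_{\text{FJRW}}(q^{1/r})\sim I_{\text{FJRW}}^{small}(q^{-1/d},-1)$, and a direct computation with the Laplace transform shows $\mathbb{I}_{\text{FJRW}}(q^{1/r})$ solves the original Picard--Fuchs equation. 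Since $I_{\text{GW}}^{small}(q,1)$ is a fundamental system at $q=0$, there is a unique $L_{\text{GW}}$ with $L_{\text{GW}}\cdot I_{\text{GW}}^{small}=\mathbb{I}_{\text{FJRW}}$. The matrix of $L_{\text{GW}}$ is never computed; only its existence and uniqueness are used.

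Your ``decisive analytic point''---choosing a sector in which \emph{every} exponential $\exp(c_j\,q^{1/r})$ is recessive---cannot be carried out when $r\ge 2$. The constants $c_j$ are the $r$ distinct $r$-th roots of $r^r d^d\prod_j w_j^{-w_j}$, hence equispaced on a circle; for any ray of $q^{1/r}$ at least one has $\mathrm{Re}(c_j q^{1/r})>0$. This is not a technicality you can repair by shrinking the sector: Theorem~\ref{intro-massive-theorem} of this very paper shows that $I_{\text{GW}}^{small}(q,1)$ itself has exponential growth $\sim C\,q^{-(N-2)/2r}e^{\alpha q^{1/r}}$ as $q\to+\infty$ along the real axis. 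Its Poincar\'e power-series asymptotic is therefore the zero series, and nothing nontrivial can be ``read off'' from it. The collapsing is not caused by the wild part of $I_{\text{GW}}^{small}$ being subdominant; it is caused by $L_{\text{GW}}$ selecting precisely the linear combinations of GW components whose wild parts \emph{cancel}. The paper constructs those combinations directly as Borel sums, rather than trying to extract them from the asymptotics of $I_{\text{GW}}^{small}$.

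A Mellin--Barnes argument can be made to work, but the logic must be reversed relative to what you wrote: for each $k\in\textbf{Nar}$ one uses a Barnes integral to \emph{define} an analytic function whose asymptotic series (via Watson-type estimates on the contour) is the $\phi_k$-component of $I_{\text{FJRW}}^{small}$, and only \emph{then} closes the contour the other way to express it as an explicit $\Gamma$-ratio combination of GW components. That is the connection-problem formulation, essentially equivalent to the paper's Borel-sum construction; it is not the same as computing large-$q$ asymptotics of $I_{\text{GW}}^{small}$ and discarding the exponential part.
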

This theorem yields the following important consequence:
\begin{cor}\label{corollary1.4}
If $\K<0$, the genus zero FJRW invariants are completely determined by the genus zero Gromov-Witten invariants of $\mathcal{X}_W$.
\end{cor}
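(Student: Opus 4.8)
The plan is to read off the genus zero FJRW invariants from the genus zero Gromov--Witten invariants by chaining together the two mirror theorems with the asymptotic comparison of Theorem \ref{fano-genus-zero-correspondence}. First I would record the standard fact that, on the Gromov--Witten side, the genus zero ambient invariants of $\mathcal{X}_W$ are equivalent data to the small $I$-function $I_{\text{GW}}^{small}(q,z)$: by Givental's mirror theorem (see \cite{Givental}) this function lies on the Lagrangian cone generated by the genus zero descendant invariants, and conversely the $J$-function---hence every genus zero ambient invariant---is recovered from $I_{\text{GW}}^{small}$ by the mirror transformation. Thus ``knowing the genus zero Gromov--Witten invariants'' is the same as ``knowing $I_{\text{GW}}^{small}$.''

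Next I would invoke Theorem \ref{fano-genus-zero-correspondence} to cross over to the Landau--Ginzburg side. It supplies a \emph{fixed} linear map $L_{\text{GW}}$, of rank $\dim H_{\text{FJRW}}^{nar}(W)$, with $L_{\text{GW}}\cdot I_{\text{GW}}^{small}(q,1)\sim I_{\text{FJRW}}^{small}(t=q^{-1/d},-1)$ as $q\to\infty$ along the prescribed sector. Because power series asymptotic expansions along a fixed sector and asymptotic sequence are unique, the left-hand side pins down its expansion unambiguously; since $\K<0$ forces $I_{\text{FJRW}}^{small}(t,-1)$ to be a (divergent) formal power series in $t$, this expansion \emph{is} $I_{\text{FJRW}}^{small}$ coefficient by coefficient. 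The rank condition on $L_{\text{GW}}$ guarantees that no narrow component is lost. Hence $I_{\text{GW}}^{small}$ determines $I_{\text{FJRW}}^{small}$.

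The third step promotes the small FJRW $I$-function to the big one. Inspecting the explicit formula for $I_{\text{FJRW}}(\textbf{t},z)$, the coefficient of $\phi_{\widetilde{h}(\textbf{k})}$ depends on the multi-index $\textbf{k}$ only through $m=\sum_i ik_i$, and it is exactly the quantity $\prod_{j=1}^N\prod_{0\le b<q_jm,\ \langle b\rangle=\langle q_jm\rangle}(q_j+b)z$ that already occurs as the coefficient of $\phi_{\widetilde{h}(m)}$ in $I_{\text{FJRW}}^{small}$ (a short Gamma-function manipulation identifies the two, using $\langle q_jm\rangle=\langle q_jk\rangle$ when $m\equiv k \pmod d$). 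Consequently $I_{\text{FJRW}}^{small}$ records every coefficient entering the big $I$-function, so it reconstructs $I_{\text{FJRW}}(\textbf{t},z)$ in full. Finally, the FJRW Mirror Theorem (Theorem \ref{FJRW-mirror}) expresses the big $J$-function $J_{\text{FJRW}}$ in terms of $I_{\text{FJRW}}$ through Equation $(\ref{mirror-equation})$, with $\boldsymbol{\tau}$ and the $c_i$ determined along the way; since the genus zero narrow FJRW invariants are by definition the coefficients of $J_{\text{FJRW}}$, they are now determined. Composing the four determinations gives the corollary.

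I expect the main obstacle to be the third step---verifying that restricting to the small slice loses no information. One must check that the narrow coefficients harvested from $I_{\text{FJRW}}^{small}$ range over precisely the set of indices needed to assemble $I_{\text{FJRW}}(\textbf{t},z)$, i.e. that the congruence constraint $d\langle m/d\rangle\in\textbf{Nar}$ cutting out the small $I$-function matches the index set of the big one. A secondary point requiring care is the uniqueness claim in the second step: it hinges on Theorem \ref{fano-genus-zero-correspondence} fixing the sector and asymptotic sequence, so that the formal expansion attached to $L_{\text{GW}}\cdot I_{\text{GW}}^{small}$ is well defined.
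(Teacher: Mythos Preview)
Your approach is essentially the paper's: chain GW invariants $\Rightarrow I_{\text{GW}}^{small}$ $\Rightarrow I_{\text{FJRW}}^{small}$ via Theorem~\ref{fano-genus-zero-correspondence} $\Rightarrow$ big $I_{\text{FJRW}}$ $\Rightarrow J_{\text{FJRW}}$ via Theorem~\ref{FJRW-mirror} $\Rightarrow$ invariants. The paper's own proof is a one-paragraph version of exactly this.

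One genuine step you skip, however, is the recovery of the $z$-variable. Theorem~\ref{fano-genus-zero-correspondence} hands you only $I_{\text{FJRW}}^{small}(t,-1)$, i.e.\ the specialization at $z=-1$; your step three then manipulates coefficients of $I_{\text{FJRW}}(\mathbf{t},z)$ with $z$ present. The paper fills this gap by invoking the homogeneity relation in Section~\ref{section_z=1}, Equation~(\ref{z=1}), which reconstructs $I_{\text{FJRW}}^{small}(t,z)$ from its $z=1$ (or $z=-1$) specialization via the grading operator $\mathbf{Gr}$. You should insert that step between your second and third paragraphs. For the small-to-big passage itself, the paper packages your coefficient-matching observation as the explicit differential-operator identity (\ref{big-small}); your informal version is correct but would be cleaner if you cited that formula directly rather than re-deriving the dependence on $m=\sum_i ik_i$.
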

\textbf{General type case:} $\K>0$.\\
In the general type case the story is exactly the opposite: $I_{\text{GW}}^{small}(q,1)$ diverges for $q\neq 0$ and $I_{\text{FJRW}}^{small}(t,-1)$ converges for $t\neq\infty$. $I_{\text{FJRW}}^{small}(t,-1)$ has larger rank than $I_{\text{GW}}^{small}(q,1)$ as cohomology-valued functions. As in the Fano case, it is not possible to relate the two series by analytic continuation. However, we have the following result in terms of power series asymptotic expansions:
\begin{theo}[Genus zero LG/General type Correspondence]\label{general-type-genus-zero-correspondence}
If $\K<0$, there exists a unique linear transformation $L_{\text{FJRW}}:H_{\text{FJRW}}^{nar}\left(W\right)\longrightarrow L_{\text{FJRW}}$ of rank equal to the dimension of $H_{\text{CR}}^{amb}(\mathcal{X}_W;\C)$ such that
\[
L_{\text{FJRW}}\cdot I_{\text{FJRW}}^{small}(t,-1)\sim I_{\text{GW}}^{small}(q=t^{-d},1)
\]
as $t\rightarrow\infty$ from some suitable sector of the complex plane.
\end{theo}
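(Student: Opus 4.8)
The plan is to run the argument of Theorem~\ref{fano-genus-zero-correspondence} in reverse, exchanging the roles of the two theories and moving the irregular singular point of the Picard--Fuchs operator~(\ref{PF-equation}) from $q=\infty$ to $q=0$. The starting point is that in the general type case $\K>0$ both $I$-functions are (cohomology-valued) solutions of the irreducible component $\mathcal{P}$ of the scalar operator in~(\ref{PF-equation}), where after the change of variables $q=t^{-d}$ the point $t=\infty$ corresponds to $q=0$. The components $\phi_k$ of $I_{\text{FJRW}}^{small}(t,-1)$ furnish a complete set of \emph{convergent} solutions of $\mathcal{P}$ near $t=0$ (the regular singular point $q=\infty$), extending to entire functions of $t$, so the order of $\mathcal{P}$ equals $\dim H_{\text{FJRW}}^{nar}(W)$; meanwhile $I_{\text{GW}}^{small}(q,1)$ is a \emph{formal} (divergent) solution of $\mathcal{P}$ at the irregular singular point $q=0$ whose number of independent components is $\dim H_{\text{CR}}^{amb}(\mathcal{X}_W;\C)<\dim H_{\text{FJRW}}^{nar}(W)$.

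First I would carry out the formal classification of solutions of $\mathcal{P}$ at $q=0$. By the Hukuhara--Turrittin normal form, the formal solution space decomposes into a \emph{moderate growth} part, spanned by genuine power series (possibly times $q^{\rho}\log q$ blocks), and a complementary part carrying nontrivial exponential factors $e^{Q(q^{-1/p})}$ coming from the irregularity. The next step is to identify $I_{\text{GW}}^{small}(q,1)$ with a basis of the moderate-growth part: its shape $zq^{P/z}\sum_n q^n(\cdots)\mathbf{1}_{\langle -n\rangle}$ is manifestly of power-series type, so it carries no exponential factor, and a rank count shows it exhausts the moderate-growth solutions. The exponential solutions are exactly the "massive vacua"; on a sector where $\mathrm{Re}\,Q(q^{-1/p})\to-\infty$ as $q\to0$, they are exponentially small and hence invisible to Poincar\'e asymptotic expansion.

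With this in hand, I would apply the Main Asymptotic Existence Theorem (Borel--Ritt/Hukuhara) to lift each moderate-growth formal solution to an actual analytic solution of $\mathcal{P}$ on such a sector whose asymptotic expansion as $q\to0$ is the given formal series. Since the solution space of $\mathcal{P}$ is spanned by the entire functions $\phi_k$ appearing in $I_{\text{FJRW}}^{small}$, analytic continuation expresses each of these sectorial lifts as a definite linear combination of the FJRW solutions; assembling these combinations and composing with the identification of the moderate-growth part with $H_{\text{CR}}^{amb}$ defines $L_{\text{FJRW}}:H_{\text{FJRW}}^{nar}(W)\to H_{\text{CR}}^{amb}(\mathcal{X}_W;\C)$. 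On the chosen sector the exponential solutions contribute only terms beyond all orders, so $L_{\text{FJRW}}\cdot I_{\text{FJRW}}^{small}(t,-1)\sim I_{\text{GW}}^{small}(q=t^{-d},1)$, and the rank of $L_{\text{FJRW}}$ equals $\dim H_{\text{CR}}^{amb}$ because the moderate-growth formal solutions are linearly independent, i.e.\ $L_{\text{FJRW}}$ is surjective onto its target. Uniqueness follows from Watson-type uniqueness of asymptotic lifts: on a sector of opening exceeding the critical Stokes width, two solutions with the same power-series asymptotics differ by an exponentially small solution, which must vanish, pinning down each row of $L_{\text{FJRW}}$.

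Concretely, rather than invoking the abstract existence theorem, I expect the cleanest execution to go through a Mellin--Barnes integral representation of the solutions of $\mathcal{P}$, exploiting the $\Gamma$-function coefficients of both $I$-functions: closing the contour to the side dictated by $|q|$ produces the convergent residue sum (the FJRW solutions), while a steepest-descent analysis of the same integral as $q\to0$ yields the divergent series $I_{\text{GW}}^{small}$ as its asymptotic expansion, with the connection coefficients---the entries of $L_{\text{FJRW}}$---read off from the residue bookkeeping and the saddle contributions playing the role of the exponentially small massive vacua. The main obstacle I anticipate is precisely this connection/Stokes analysis: choosing a sector on which the lift is unique, and verifying that $I_{\text{GW}}^{small}$ is \emph{exactly} the moderate-growth block rather than merely contained in it. The bookkeeping of the fractional exponents $\langle nd\rangle,\langle nw_j\rangle$ and of the characteristic exponents at $q=0$ must be matched carefully for the rank of $L_{\text{FJRW}}$ to come out equal to $\dim H_{\text{CR}}^{amb}(\mathcal{X}_W;\C)$.
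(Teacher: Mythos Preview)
Your outline is sound and would yield a proof, but it takes a genuinely different route from the paper. The paper does not invoke the Hukuhara--Turrittin classification, the Main Asymptotic Existence Theorem, or Mellin--Barnes integrals at all. Instead it performs an explicit Borel summation, mirroring step by step its proof of Theorem~\ref{fano-genus-zero-correspondence}: one writes down by hand a \emph{regularized} GW $I$-function
\[
I_{\text{GW}}^{reg}(\tau)=\sum_{f\in F}\mathbf{1}_f\sum_{n\geq 0}\frac{\tau^{\K(n+\overline{f}+P)}}{\Gamma(1+\K(n+\overline{f}+P))}\,(\cdots),
\]
checks directly that it converges on a disk, satisfies a ``regularized'' Picard--Fuchs ODE with only \emph{regular} singular points (hence analytically continues along any ray avoiding them), and then defines $\mathbb{I}_{\text{GW}}(u):=u\,\mathcal{L}(I_{\text{GW}}^{reg})(u)$. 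Watson's Lemma gives the asymptotic expansion $\mathbb{I}_{\text{GW}}\sim I_{\text{GW}}^{small}$ immediately, and a short computation with the Laplace transform (identical to Lemma~\ref{new_I_Picard}) shows $\mathbb{I}_{\text{GW}}(u=t^{d/\K})$ solves the Picard--Fuchs equation at the regular point $t=0$; the map $L_{\text{FJRW}}$ is then just the change-of-basis matrix expressing $\mathbb{I}_{\text{GW}}$ in the complete FJRW solution basis.

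What each approach buys: the paper's Borel-sum construction is entirely elementary and self-contained---no Stokes analysis, no sector-width bookkeeping, and the connection map $L_{\text{FJRW}}$ is produced as a concrete object rather than via an abstract existence theorem. Your approach, by contrast, explains \emph{why} the construction works (the moderate-growth block of the formal decomposition is exactly $I_{\text{GW}}^{small}$, the massive vacua are the exponential block), and your remark about Watson-type uniqueness on a sufficiently wide sector is actually a more careful treatment of the uniqueness of $L_{\text{FJRW}}$ than the paper gives. The Mellin--Barnes alternative you sketch would also work and would make the entries of $L_{\text{FJRW}}$ explicit, but again it is not what the paper does.
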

The LG/General type correspondence implies the following important result:
\begin{cor}\label{corollary1.6}
If $\K>0$, the genus zero GW invariants of $\mathcal{X}_W$ are completely determined by the genus zero FJRW invariants of the pair $\left(W,\langle J_W\rangle\right)$.
\end{cor}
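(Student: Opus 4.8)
The plan is to read Corollary \ref{corollary1.6} as a bookkeeping statement about the equivalences already furnished by the earlier results. Recall that in both theories the genus zero sector is encoded in a Givental $J$-function, and that the $I$-function and the $J$-function lie on a common Givental Lagrangian cone; each of them, together with the mirror transformation, recovers the cone and hence the other. Thus the genus zero FJRW invariants of $\left(W,\langle J_W\rangle\right)$ are equivalent data to $I_{\text{FJRW}}^{small}$, while the genus zero Gromov-Witten invariants of $\mathcal{X}_W$ restricted to ambient classes are equivalent data to $I_{\text{GW}}^{small}$. It therefore suffices to produce a well-defined, one-directional passage from $I_{\text{FJRW}}^{small}$ to $I_{\text{GW}}^{small}$, and the general type correspondence of Theorem \ref{general-type-genus-zero-correspondence} is precisely such a passage.

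First I would make the FJRW side explicit. By the FJRW mirror theorem (Theorem \ref{FJRW-mirror}), the big $J$-function $J_{\text{FJRW}}$ is obtained from $I_{\text{FJRW}}$ through the transformation (\ref{mirror-equation}), whose mirror map $\boldsymbol{\tau}(\textbf{t})$ is the identity to leading order and is therefore formally invertible; consequently the genus zero FJRW correlators, which are the coefficients of $J_{\text{FJRW}}$, and the function $I_{\text{FJRW}}$ determine one another as formal objects. Restricting to the slice $t_0^i=0$ for $i\neq 1$ specializes this to the equivalence between the genus zero FJRW theory and the small $I$-function $I_{\text{FJRW}}^{small}(t,-1)$, together with its $z$-dependence, which is rigidly fixed by the homogeneity of the $I$-function.

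The heart of the argument is the recovery of $I_{\text{GW}}^{small}$ from $I_{\text{FJRW}}^{small}$. I would apply the explicit linear transformation $L_{\text{FJRW}}$ of Theorem \ref{general-type-genus-zero-correspondence} and invoke the uniqueness of Poincar\'e asymptotic expansions: a function admits at most one asymptotic power series in a fixed sector. Since $L_{\text{FJRW}}\cdot I_{\text{FJRW}}^{small}(t,-1)$ is a convergent cohomology-valued function asymptotic to the formal series $I_{\text{GW}}^{small}(q=t^{-d},1)$ as $t\to\infty$, that formal series is recovered unambiguously as its asymptotic expansion. Because $L_{\text{FJRW}}$ collapses the larger-rank FJRW $I$-function onto the smaller-rank Gromov-Witten $I$-function, the determination is intrinsically one-directional, which is exactly the asymmetry reflected in the statement of the corollary, and the mirror image of Corollary \ref{corollary1.4}.

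Finally I would run the Givental mirror theorem in reverse to pass from $I_{\text{GW}}^{small}(q,z)$ to the Gromov-Witten $J$-function and hence to the ambient genus zero invariants, completing the chain. I expect the main obstacle to lie here, in two related well-posedness issues. First, Theorem \ref{general-type-genus-zero-correspondence} is phrased at $z=1$, so one must reconstruct the full $z$-dependence of $I_{\text{GW}}^{small}(q,z)$; this is supplied by the grading of the $I$-function (the prefactor $zq^{P/z}$ and the weighted degrees of the hypergeometric factors), which fixes the $z$-weight of each term in the series. Second, since $\K>0$ the Picard-Fuchs operator (\ref{PF-equation}) is irregular at $q=0$ and $I_{\text{GW}}^{small}$ is only a formal solution there, so the mirror transformation must be interpreted as an identity of formal power series; its leading term is invertible, so the inversion is legitimate and introduces no convergence obstruction. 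Assembling these steps yields the corollary.
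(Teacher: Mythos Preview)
Your proposal is correct and follows essentially the same approach as the paper: the paper's own proof is simply ``The proof is similar to that of Corollary~\ref{corollary1.4},'' and your chain (FJRW invariants $\leftrightarrow$ $I_{\text{FJRW}}^{small}$ via the mirror theorem, then $I_{\text{FJRW}}^{small}\to I_{\text{GW}}^{small}$ via Theorem~\ref{general-type-genus-zero-correspondence} and uniqueness of asymptotic expansions, then $I_{\text{GW}}^{small}\to$ GW invariants via the Givental mirror theorem) is precisely the mirror image of that argument. Your explicit attention to recovering the $z$-dependence by homogeneity (the GW analogue of Equation~(\ref{z=1})) and to the purely formal nature of the mirror map at the irregular point $q=0$ fills in details the paper leaves implicit.
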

Lastly, we observed that the information corresponding to the "massive vacua" is encoded in formal solutions to the Picard-Fuchs equation with an exponential factor as the leading term. For a degree $d$ Fano hypersurface in $\mathbb{P}(w_1,\dots,w_N)$, the \textit{massive vacuum solutions} are defined to be
\[
I_{j,mass}(q):=q^{-\frac{N-2}{2r}} e^{\alpha_j q^{\frac{1}{r}} }\sum_{n=0}^{\infty}\frac{a_{j,n}}{q^{n/r}}\quad\text{for $j=1,\dots,r,$}
\]
where $r=\sum_j w_j-d$, and $\{\alpha_j\}_{j=1}^r$ are the distinct roots of $\left(\frac{\alpha}{r}\right)^r=d^d\prod_{j=1}^N w_j^{-w_j}$. These formal series are solutions to the Picard-Fuchs equation at $q=\infty$. If we restrict to projective space $\mathbb{P}^{N-1}$, we can relate the Gromov-Witten $I$-function to the massive vacuum solutions via asymptotic expansion:
\begin{theo}\label{intro-massive-theorem}
Let $I_{\text{GW}}(q,1)$ be the Gromov-Witten $I$-function of a degree $d$ hypersurface inside $\mathbb{P}^{N-1}$. Then,
\[
I_{\text{GW}}(q=u^r,1)\sim C' \frac{\Gamma(1+P)^N}{\Gamma(1+dP)} u^{-\frac{(N-2)}{2}} e^{\alpha u} (1+\mathcal{O}\left(u^{-1} )\right)\quad\text{as $u\rightarrow +\infty$ along the real axis,}
\]
where $C'$ is a constant, $r=N-d$, and $\alpha>0$ satisfies $\left(\frac{\alpha}{r}\right)^r=d^d$.
\end{theo}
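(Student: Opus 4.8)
The plan is to reduce the statement to a single saddle-point (Laplace) estimate for a power series with Gamma-function coefficients. First I would specialize the general $I$-function to $\mathbb{P}^{N-1}$, where every weight equals $1$ and there is no orbifold structure, so that the sum runs over $n\in\mathbb{Z}_{\geq 0}$ and each $\mathbf{1}_{\langle -n\rangle}$ is the untwisted identity. Collecting the products into Gamma functions and pulling out the $n$-independent factor, one obtains
\[
I_{\text{GW}}(q,1)=\frac{\Gamma(1+P)^N}{\Gamma(1+dP)}\,q^{P}\sum_{n=0}^{\infty}\frac{\Gamma(dP+nd+1)}{\Gamma(P+n+1)^{N}}\,q^{n}.
\]
Since the prefactor is exactly the one appearing in the theorem, it remains to prove that, with $q=u^{r}$ and $r=N-d>0$,
\[
q^{P}\sum_{n=0}^{\infty}\frac{\Gamma(dP+nd+1)}{\Gamma(P+n+1)^{N}}\,u^{rn}\sim C'\,u^{-(N-2)/2}e^{\alpha u}\bigl(1+\mathcal{O}(u^{-1})\bigr)
\]
as $u\to+\infty$. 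Because $P$ is nilpotent in $H^{*}(\mathbb{P}^{N-1})$, I would treat it as a small complex parameter, prove the estimate uniformly for $P$ near $0$, and then read off each cohomological component by expanding in $P$.

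Next I would apply Laplace's method for sums to $S(u)=\sum_n a_n u^{rn}$ with $a_n=\Gamma(dP+nd+1)/\Gamma(P+n+1)^{N}$. Writing $\log(a_n u^{rn})=\Phi(n)$ and inserting Stirling's expansion of each Gamma factor gives, for the continuous variable $n$,
\[
\Phi(n)=-\bigl(nr+Pr+\tfrac{N-1}{2}\bigr)\log n+\bigl(nd+dP+\tfrac12\bigr)\log d+nr+nr\log u+\text{const}+\mathcal{O}(n^{-1}).
\]
Since all terms are positive for real $q>0$ and $P=0$, the summand has a single sharp peak, and solving $\Phi'(n)=0$ yields the saddle $n_{*}=d^{d/r}u=(\alpha/r)u$, where $\alpha=rd^{d/r}$ is precisely the positive root of $(\alpha/r)^{r}=d^{d}$. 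Substituting back, the terms proportional to $u$ collapse to $\Phi(n_{*})\sim n_{*}r=\alpha u$, which produces the required exponential $e^{\alpha u}$.

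For the algebraic prefactor I would keep the next orders. The Gaussian width of the peak is governed by $\Phi''(n_{*})=-r/n_{*}\asymp u^{-1}$, so replacing the sum by its Gaussian approximation contributes a factor $\sqrt{2\pi/(-\Phi''(n_{*}))}\asymp u^{1/2}$, while the $\log u$ parts of the subleading terms in $\Phi(n_{*})$ give $u^{-rP-(N-1)/2}$, so that $S(u)\sim C''\,u^{-rP}u^{-(N-2)/2}e^{\alpha u}$. Finally, multiplying by $q^{P}=u^{rP}$ cancels the $u^{-rP}$ factor exactly, leaving $u^{-(N-2)/2}e^{\alpha u}$; this cancellation is the reason the answer is a genuine power of $u$ despite the $q^{P}$ twist. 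Carrying Watson's-lemma-type higher corrections through the Laplace expansion produces the full asymptotic series, and in particular the error term $1+\mathcal{O}(u^{-1})$.

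The main obstacle is making the Laplace/saddle estimate rigorous rather than heuristic: I must justify replacing the discrete sum by the Gaussian integral (the peak width $\sqrt{n_{*}}\to\infty$ makes the summand slowly varying near $n_{*}$, so Euler--Maclaurin or a direct dominated-tail bound applies), and I must obtain the estimate uniformly in $P$ so that expanding the nilpotent class is legitimate. The restriction to $u\to+\infty$ along the real axis is essential rather than cosmetic: there are $r$ formal exponential solutions $e^{\alpha_j u}$ indexed by the $r$-th roots of $(\alpha_j/r)^{r}=d^{d}$, and only the one with $\alpha_j=\alpha>0$ dominates on the positive real axis, so no Stokes-type competition occurs there. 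An alternative to the hands-on Laplace argument would be to recognize $S$ as a generalized hypergeometric (Wright) function of $d^{d}q$ and invoke the classical large-argument asymptotics of such functions; I prefer the saddle-point route here because it tracks the $P$-dependence and the $u^{-rP}/q^{P}$ cancellation transparently.
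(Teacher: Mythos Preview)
Your approach is correct but genuinely different from the paper's. The paper does not apply Laplace's method directly to the hypergeometric sum; instead it writes the hypersurface $I$-function as a Laplace-type integral of the pullback of the ambient $I$-function of $\mathbb{P}^{N-1}$,
\[
I_{\text{GW}}(q,1)=\frac{1}{\Gamma(1+dP)\,v}\int_{0}^{\infty} i^{\ast}I_{\mathbb{P}}(\tau^{d})\,e^{-\tau/v}\,d\tau
\]
(this is the quantum Lefschetz principle), imports the already-known asymptotic $I_{\mathbb{P}}(\tau)\sim C\,\Gamma(1+H)^{N}\tau^{-(N-1)/2N}e^{N\tau^{1/N}}$ from Galkin--Golyshev--Iritani, and then runs the continuous method of steepest descent on the phase $f(t)=Nt^{d/N}-t$, whose unique critical point $t_{0}=d^{N/(N-d)}$ produces the same exponent $\alpha$ and the same algebraic prefactor. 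Your route---Stirling on the summand, discrete Laplace at $n_{\ast}=d^{d/r}u$, Euler--Maclaurin to pass to an integral, and the $q^{P}$ vs.\ $u^{-rP}$ cancellation---is more self-contained (no external citation, no integral representation) and makes the $P$-dependence transparent; the paper's route is shorter because the hard asymptotics is outsourced to the ambient case and the remaining step is a textbook steepest-descent integral rather than a discrete sum. Both land on exactly the same leading term, and your observation about the $r$ competing exponentials and the role of the positive real axis matches the paper's concluding remark.
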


A more general version of this theorem, which applies to hypersurfaces inside Fano manifolds, was first proved by Galkin-Golyshev-Iritani (see \cite[Section 5.3]{Iritani}) in the context of the Gamma conjectures.

\subsection{Plan of the paper}

The paper is organized as follows. In Section \ref{correspondence} we define Chen-Ruan cohomology and the FJRW state space, prove the cohomological correspondence, and present explicit examples of the correspondence. Section \ref{quantum-theory} contains a review of the genus zero FJRW theory. We also review the formalism of Givental and construct the FJRW $I$-function. In Section \ref{correspondence-asymptotic} we define asymptotic expansions and provide a proof of the genus zero correspondence and its corollaries.
\subsection{Acknowledgments}
I am greatly indebted to my adviser Yongbin Ruan for introducing me to this problem and for the countless hours of advice I have received from him. I would also like to thank Kentaro Hori for explaining his work to me during his visit to Michigan and for patiently listening to my proof of the cohomological correspondence. This paper owes greatly to his work. I would like to thank Peter Miller. I received great help from him while I was studying the asymptotic analysis of the $I$-function. The examples found in his book on asymptotic analysis were particularly useful. I would also like to acknowledge Hiroshi Iritani for his help in the process of writing this paper. In many ways he has acted as a second adviser. Lastly, I would like to acknowledge Alessandro Chiodo, Emily Clader, Jeremy Hoskins, Tyler Jarvis, Albrecht Klemm, Dustin Ross, and Mark Shoemaker for expressing interest in my work, for their valuable advice, and for many fruitful discussions.




\section{The Modified Cohomological Correspondence}\label{correspondence}

In this section we define Chen-Ruan cohomology and the state space of FJRW theory. We mainly follow the presentation found in \cite{Chiodo-Ruan}.

\subsection{The set-up}

We consider quasi-homogenous polynomials of the form
\begin{equation}\label{polynomial}
\W=c_1\prod_{j=1}^{N}x_{j}^{b_{1,j}} + \dots + c_s\prod_{j=1}^{N} x_{j}^{b_{s,j}}
\end{equation}
where $c_{i}\neq 0$ and $b_{i,j}\in \mathbb{Z}_{\geq 0}$. We further assume that each monomial is distinct. A quasi-homogenous polynomial of degree $d$ and integer weights $w_{1},\dots, w_{N}$ is said to be \textit{non-degenerate} if it satisfies the following conditions:

\begin{enumerate}
\item The polynomial $W$ has a unique singular point at the origin;\hfill\refstepcounter{equation}(\theequation)\label{non-degenerate}
\item the rational numbers $q_{i}:=w_i/d$ (known as \textit{charges}) are  completely determined by the polynomial $W$.\hfill\refstepcounter{equation}(\theequation)
\end{enumerate}
\subsubsection{ The state space of Gromov-Witten theory} We use Chen-Ruan cohomology as the state space of Gromov-Witten theory. We briefly review its construction for the case of hypersurfaces in weighted projective space.

Consider the weighted projective stack
\[
\Proj =[ \C^{N}\backslash\{\mathbf{0}\}/\C^{*}]
\]
where $\C^{\ast}$ acts on $\C^{N}\backslash\{\mathbf{0}\}$ by $\lambda\cdot (x_{1},\dots,x_{N})=(\lambda^{w_{1}}x_{1},\dots,\lambda^{w_{N}}x_{N})$.

Inside $\C^{N}\backslash\{\mathbf{0}\}$, the locus $\{W=0\}$ defines a smooth hypersurface since $(\partial_{i}W(x))_{i=1}^N\neq 0$ on $\C^{N}\backslash\{\mathbf{0}\}$ (by the non-degeneracy condition (\ref{non-degenerate})). Because the action of $\C^{\ast}$ fixes $\{W=0\}$, we can define a quotient stack $\mathcal{X}_{W}$ by
\begin{equation}
\mathcal{X}_{W}:=[\{W=0 \}_{\C^{N}\backslash\{\mathbf{0}\}}/\C^{\ast}] \subset \Proj
\end{equation}

Consider now a subgroup $G$ of $\text{Aut}(W)$ containing the element $J_{W}$. The group $\widetilde{G}:=G/\langle J_{W}\rangle$ acts faithfully on the stack $\mathcal{X}_{W}$ and so, we can define a quotient stack $[\mathcal{X}_{W}/\widetilde{G}]$. 

Define a homomorphism of groups from $\C^{\ast}$ to $(\C^{\ast})^N$ by
\begin{equation}
\lambda \mapsto \bar{\lambda}:=(\lambda^{w_1},\dots, \lambda^{w_N}).
\end{equation}
This map is an injection since $\bigcap_{i=1}^N \boldsymbol{\mu}_{w_i}=\{1\}$ (this follows from $\text{gcd}(w_1,\dots,w_N)=1$). Under this map, the multiplicative group of $dth$-roots of unity $\boldsymbol{\mu}_d$ maps to $\langle J_{W}\rangle$ and $G\cap \C^{\ast}=\langle J_{W}\rangle$. It follows that $\widetilde{G}\cong G\C^{\ast}/\C^{\ast}$.

From \cite[Remark 2.4]{Romagni} and the isomorphism $\widetilde{G}\cong G\C^{\ast}/\C^{\ast}$, we have the following isomorphism of stacks
\begin{equation}\label{stack}
[\mathcal{X}_{W}/\widetilde{G}]\cong[\{W=0 \}_{\C^{N}\backslash\{\mathbf{0}\}}/G\C^{\ast}]
\end{equation}

where $G\C^{\ast}:=\{g(\lambda^{w_{1}},\dots,\lambda^{w_{N}}) \mid g\in G \subset (\C^\ast)^{N}, \lambda\in \C^{\ast}  \}$. We will use Equation (\ref{stack}) as the working definition of $[\mathcal{X}_{W}/\widetilde{G}]$. 

We want to compute the Chen-Ruan cohomology with complex coefficients of this stack. We briefly review this construction for a smooth Deligne-Mumford stack of the form $\mathcal{X}=[\mathcal{U}/H]$, where $H$ is an Abelian group (for a more comprehensive review, see \cite{Chen-Ruan, ALR}). As a $\C$-vector space, the Chen-Ruan cohomology of $\mathcal{X}$  is the cohomology with $\C$-coefficients of the \textit{inertia stack} $\mathcal{IX}:=\bigsqcup_{h\in H} \mathcal{X}_h$, where the so-called \textit{$h$-sector} is defined to be
\begin{equation}
\mathcal{X}_h:=[\{ u\in \mathcal{U} \mid h\cdot u = u\}/H]
\end{equation}

Since we are working with complex coefficients, $H^{\ast}(\mathcal{X}_{h};\C)$ is isomorphic to the cohomology of the underlying coarse quotient scheme of $\mathcal{X}_{h}$.

For the case under consideration, we define the following notation:
\begin{definition}
For $\gamma=g\bar{\lambda}=(g_1\lambda^{w_1},\dots, g_N\lambda^{w_N})\in G\C^\ast$ define
\begin{align}
	\C_{\gamma}^{N} &:=\{x\in \C^N \mid \gamma\cdot x = x\} \\
	N_{\gamma} &:=\text{dim}_{\C}(\C_{\gamma}^N) \\
	W_{\gamma} &:= W\mid_{\C_{\gamma}^N}\\
	\boldsymbol{w}_{\gamma}&:=\{1\leq j\leq\ N \mid g_j\lambda^{w_j}=1\} \\
	\{W_{\gamma}=0\}_{\gamma} &:= \{ W_{\gamma}=0\}_{\C_\gamma^N\backslash\{\mathbf{0}\}} \subset \C_\gamma^N\backslash\{\mathbf{0}\}
\end{align}
\end{definition}

The following lemma (\cite[p.12]{Chiodo-Ruan}) ensures that $\{ W_{\gamma}=0\}_\gamma$ is smooth in $\C_\gamma^N\backslash\{\mathbf{0}\}$:
\begin{lemma}\label{dichotomy}
If $\gamma\in G$, then $\{ W_{\gamma}=0\}_\gamma$ is a smooth hypersurface in $\C_\gamma^N\backslash\{\mathbf{0}\}$. If $\gamma\notin G$ then  $\{ W_{\gamma}=0\}_\gamma=\C_\gamma^N\backslash\{\mathbf{0}\}$.
\end{lemma}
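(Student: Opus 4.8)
The plan is to observe that the stated dichotomy is governed entirely by the scalar by which $\gamma$ rescales $W$. Writing $\gamma=g\bar\lambda$ with $g\in G$ and $\bar\lambda=(\lambda^{w_1},\dots,\lambda^{w_N})$, the quasi-homogeneity condition $(\ref{quasi-homogenous})$ together with the fact that $g\in\text{Aut}(W)$ gives the key identity $W(\gamma\cdot x)=\lambda^d W(x)$ for all $x\in\C^N$. First I would record that $\gamma\in G$ if and only if $\lambda^d=1$: since $G$ is a group and $g\in G$, membership $\gamma\in G$ is equivalent to $\bar\lambda\in G\cap\C^{\ast}=\langle J_{W}\rangle$, and a short computation using $\bigcap_j\boldsymbol{\mu}_{w_j}=\{1\}$ (the hypothesis $\text{gcd}(w_1,\dots,w_N)=1$) shows that $\bar\lambda\in\langle J_{W}\rangle$ exactly when $\lambda\in\boldsymbol{\mu}_d$, i.e. $\lambda^d=1$. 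Thus $\gamma\in G$ precisely when $W(\gamma\cdot x)=W(x)$, that is, when $\gamma$ is a genuine symmetry of $W$; this is what aligns the two clauses of the lemma with the trivial/nontrivial rescaling dichotomy.

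For the case $\gamma\notin G$, so $\lambda^d\neq1$, the argument is immediate: every $x\in\C_\gamma^N$ is fixed by $\gamma$, so the key identity yields $W(x)=W(\gamma\cdot x)=\lambda^d W(x)$, whence $(1-\lambda^d)W(x)=0$ and therefore $W_\gamma\equiv0$ on $\C_\gamma^N$. This gives $\{W_\gamma=0\}_\gamma=\C_\gamma^N\backslash\{\mathbf{0}\}$ at once.

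The substantive case is $\gamma\in G$, where $\gamma\cdot x=x$ on $\C_\gamma^N$ and $W$ is genuinely $\gamma$-invariant. The crux is to transport the non-degeneracy of $W$ to its restriction $W_\gamma$. Differentiating the invariance $W(\gamma\cdot x)=W(x)$ in $x_j$ and applying the chain rule gives $\gamma_j(\partial_j W)(\gamma\cdot x)=\partial_j W(x)$; restricting to $x\in\C_\gamma^N$ (where $\gamma\cdot x=x$) yields $(\gamma_j-1)\partial_j W(x)=0$, so $\partial_j W(x)=0$ for every $j\notin\boldsymbol{w}_\gamma$. Consequently, for $x\in\C_\gamma^N\backslash\{\mathbf{0}\}$ the full gradient $\nabla W(x)$, which is nonzero by the non-degeneracy condition $(\ref{non-degenerate})$, has all of its nonvanishing entries indexed by $\boldsymbol{w}_\gamma$, i.e. in directions tangent to $\C_\gamma^N$. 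Hence $\nabla W_\gamma(x)\neq0$ throughout $\C_\gamma^N\backslash\{\mathbf{0}\}$. In particular $W_\gamma$ is not identically zero, and the Jacobian criterion shows $\{W_\gamma=0\}_\gamma$ is smooth of codimension one, i.e. a smooth hypersurface (the statement being vacuous in the degenerate case $\C_\gamma^N=\{\mathbf{0}\}$).

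The main obstacle is precisely this last transport step: a priori $W_\gamma$ could acquire a critical point from the restriction, and one must rule this out. The differentiation identity above is what makes it work, since it shows that passing to the fixed subspace discards only the partial derivatives of $W$ that already vanish there, and therefore cannot manufacture a singular point of $W_\gamma$ away from the origin. The group-theoretic reduction of the first paragraph, identifying $\gamma\in G$ with $\lambda^d=1$, is the other point requiring care, as it is what turns the purely set-theoretic condition in the lemma into the geometric statement about genuine symmetries.
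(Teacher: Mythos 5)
Your proof is correct. The paper offers no argument of its own for this lemma---it simply cites Chiodo--Ruan---and your argument is essentially the standard one underlying that reference: the identity $W(\gamma\cdot x)=\lambda^d W(x)$ forces $W_\gamma\equiv 0$ when $\lambda^d\neq 1$ (equivalently $\gamma\notin G$), while for $\gamma\in G$ the relation $(\gamma_j-1)\partial_j W(x)=0$ on the fixed coordinate subspace shows that the nonvanishing of $\nabla W$ away from the origin is inherited by $\nabla W_\gamma$, so the Jacobian criterion applies.
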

\begin{remark}
The two cases observed in Lemma \ref{dichotomy} will become important in the proof of cohomological correspondence. They represent the difference between the Gorenstein and non-Gorenstein cases.
\end{remark}

The action of $\gamma\in G\C^{\ast}$ on the tangent space $\text{T}_{\boldsymbol{x}}(\{W=0\})$ of a fixed point $\boldsymbol{x}\in \{W_{\gamma}=0\}_{\gamma} $ is given, after choosing a basis, by a diagonal matrix of the form
\begin{equation}
\text{Diag}(\text{exp}(2\pi i a_{1}^{\gamma}),\dots , \text{exp}(2\pi i a_{N{-1}}^{\gamma}))
\end{equation}
where $a_{i}^{\gamma}\in [0,1)$ for all i.
\begin{definition}
The age $a_{\boldsymbol{x}}(\gamma)$ of $\gamma\in G\C^{\ast}$ acting on $\text{T}_{\boldsymbol{x}}(\{W=0\})$ is defined as
\begin{equation}\label{ageshift}
a_{\boldsymbol{x}}(\gamma):=\sum_{i=1}^{N-1}a_{i}^{\gamma}.
\end{equation}
\end{definition}
It is not hard to see that the age is independent of the basis chosen and is constant on a given sector. When no confusion arises, we denote the age simply by $a(\gamma)$.

We are finally ready to define the Chen-Ruan cohomology of the Deligne-Mumford quotient stack $[\mathcal{X}_{W}/\widetilde{G}]$:
\begin{equation}\label{CRstate}
H_{\text{CR}}^{p,q}([\mathcal{X}_{W}/\widetilde{G}];\C):=\bigoplus_{\gamma\in G\C^{\ast}} H^{p-a(\gamma),q-a(\gamma)}(\{W_{\gamma}=0\}_{\gamma}/G\C^{\ast};\C)
\end{equation}

where $a(\gamma)$ is the age defined in Equation (\ref{ageshift}).
\subsubsection{The state space of FJRW theory}

We will now construct the FJRW state space for the singularity $W:\C^{N}\rightarrow \C$ (for a more detailed construction, see \cite{FJRa}). \\
Let $\gamma$ be an element of the group of symmetries $G$. Then $\gamma$ can be expressed uniquely in the form
\[
\gamma =(\text{exp}(2\pi i \Theta_{1}^{\gamma}),\dots, \text{exp}(2\pi i \Theta_{N}^{\gamma}))
\]
with $\Theta_{j}^{\gamma}\in [0,1)$ for all $j=1,\dots,N$.

\begin{definition}
For $\gamma=(\text{exp}(2\pi i \Theta_{1}^{\gamma}),\dots, \text{exp}(2\pi i \Theta_{N}^{\gamma}))\in \text{GL}(\C,N)$, the age $a(\gamma)$ is defined as
\begin{equation}\label{age}
a(\gamma):=\sum_{j=1}^{N}\Theta_{j}^{\gamma}.
\end{equation}
\end{definition}
The corresponding FJRW $\gamma$-sector $\mathcal{H}_{\gamma}$ is defined as the $G$-invariant part of the middle dimensional relative cohomology of $\C_{\gamma}^{N}$, i.e.
\begin{equation}\label{gamma-sector}
\mathcal{H}_{\gamma}:=H^{N_{\gamma}}(\C_{\gamma}^N, W_{\gamma}^{+\infty};\C)^{G}
\end{equation}
where $W_{\gamma}^{+\infty}:=(\mathfrak{Re} W_{\gamma})^{-1}(M,+\infty)$, $M\gg 0$.

The FJRW state space is defined as the direct sum of all these sectors,
\begin{equation}\label{FJRWspace}
H_{\text{FJRW}}^{\ast}(W,G):=\bigoplus_{\gamma\in G}\mathcal{H}_{\gamma}.
\end{equation}

As in the case of Chen-Ruan cohomology, each sector is endowed with a bi-grading defined by
\begin{equation}\label{FJRWstate}
\mathcal{H}_{\gamma}^{p,q}=H^{p-a(\gamma)+\sum_{j=1}^Nq_j,q-a(\gamma)+\sum_{j=1}^Nq_{j}}(\C_{\gamma}^N, W_{\gamma}^{+\infty};\C)^{G}
\end{equation}

where $a(\gamma)$ is the age shift of $\gamma$ defined in (\ref{age}) and the $q_j$ are the charges of Equation (\ref{non-degenerate}). This bi-grading induces a bi-grading on the FJRW state space:
\begin{equation}
H_{\text{FJRW}}^{p,q}(W,G)=\bigoplus_{\gamma \in G}\mathcal{H}_{\gamma}^{p,q}.
\end{equation}

\begin{definition}\label{narrow-sectors}
A sector for which $N_{\gamma}=0$, i.e. for which the fixed locus by the action of $\gamma$ is trivial, is said to be $narrow$. If $N_{\gamma}\neq 0$ the sector is said to be $broad$.
\end{definition}
An important subspace of the FJRW state space is the space of \textit{narrow sectors} $H_{\text{FJRW}}^{nar}(W,G)$. It is defined as the direct sum of all narrow sectors of the FJRW state space. When $G=\langle J_{W}\rangle$, it is simply denoted by $H_{\text{FJRW}}^{nar}(W)$.




\subsection{A Proof of the Modified Cohomological Correspondence}\label{proof}
In this section we explicitly compute the corresponding state spaces and establish a cohomological correspondence. 
\subsubsection{Explicit computation of the Gromov-Witten state space.}

In \cite[Section 5]{Chiodo-Ruan}, Chiodo and Ruan computed the following decomposition of the Chen-Ruan cohomology  $H_{\text{CR}}^{\ast}([\mathcal{X}_{W}/\widetilde{G}];\C)$:

Since $\widetilde{G}:=G/\langle J_{W} \rangle \cong G\C^{\ast}/\C^{\ast}$, we can find $M=|G|/d$ elements $\{g^{(1)},\dots, g^{(M)}\}\subset G$ such that
\begin{equation}\label{cosets}
G\C^{\ast} = \bigsqcup_{i=1}^{M} g^{(i)}\C^{\ast}.
\end{equation}
We then have that
\begin{equation}\label{coset-decomposition}
H_{\text{CR}}^{\ast}([\mathcal{X}_{W}/\widetilde{G}];\C)=\bigoplus_{i=1}^{M}\bigoplus_{\gamma\in g^{(i)}\C^{\ast}} H^{\ast}(\{W_{\gamma}=0\}_{\gamma}/G\C^{\ast};\C)
\end{equation}

For an element $g\in\{g^{(1)},\dots,g^{(M)}\}$, write $g:= (g_1,\dots,g_N)$. Chiodo and Ruan showed that the contribution to the cohomology coming from the coset $g\C^{\ast}$ is
\begin{equation}\label{decomposition}
\bigoplus_{\lambda\in\bigcup_{j=1}^{N}\{\alpha \mid \alpha^{-w_j}=g_{j}\} } H^{\ast}(\{ W_{g\bar{\lambda}} =0\}_{g\bar{\lambda}}/G\C^{\ast};\C).
\end{equation}

Moreover, each summand in Equation (\ref{decomposition}) can be written explicitly as
\begin{equation}\label{hypercoho}
H^{\ast}(\{W_{g\bar{\lambda}}=0\}_{g\bar{\lambda}}/G\C^{\ast};\C)=H^{N_{g\bar{\lambda}}}(\C_{g\bar{\lambda}}^{N},W_{g\bar{\lambda}}^{+\infty};\C)^{G}\oplus\bigoplus_{r=0}^{N_{g\bar{\lambda}}-2}\C \cdot P^{r} \textbf{1}_{g\bar{\lambda}},
\end{equation}
where $P \textbf{1}_{g\bar{\lambda}}$ represents the intersection of the hyperplane class in $\mathbb{P}(w_1,\dots,w_N)$ with the hypersurface $\{W_{g\bar{\lambda}}=0\}_{g\bar{\lambda}}\subset\mathbb{P}(\boldsymbol{w}_\lambda)$.

Combining Equations (\ref{coset-decomposition}), (\ref{decomposition}) and (\ref{hypercoho}) we obtain a complete decomposition of the Chen-Ruan cohomology of the quotient stack $[\mathcal{X}_{W}/\widetilde{G}]$.

\begin{remark}
The first summand in Equation (\ref{hypercoho}) corresponds to the $\widetilde{G}$-invariant part of the primitive cohomology of the hypersurface $\{W_{g\bar{\lambda}}=0\}_{g\bar{\lambda}}\subset \mathbb{P}(\boldsymbol{w}_{g\bar{\lambda}})$.
\end{remark}
\begin{remark}\label{self-intersection}
The Chen-Ruan bi-degree of $P^{r} \textbf{1}_{g\bar{\lambda}}$ is given by $(r+a(g\bar{\lambda}),r+a(g\bar{\lambda}))$, where $a(g\bar{\lambda})$ is the age defined in Equation (\ref{ageshift}). It follows that in the modified Chen-Ruan state space of Equation (\ref{correspondence-equation}), this class will live in degree zero.
\end{remark}
\begin{remark}[Notation]
When $G=\langle J_{W}\rangle$, we only have contributions from the $\C^{\ast}$-coset corresponding to $g=1$. In this case, the sectors of the inertia orbifold of $\mathcal{X}_W$ are classified by elements of the form $\lambda=\exp (2\pi i f)$, with $f\in F:=\left \{\frac{k}{w_j} \middle | \text{ }0\leq k \leq {w_j-1}, \text{ }j=1,\dots,N\right\}$. To simplify notation, we denote $\textbf{1}_{\bar{\lambda}}$ simply by $\textbf{1}_f$.
\end{remark}

\subsubsection{Explicit decomposition of the FJRW state space:}
We now find an explicit decompostion of the FJRW state space defined in (\ref{FJRWspace}):
\[
H_{\text{FJRW}}(W,G)=\bigoplus_{\gamma\in G}H^{N_{\gamma}}(\C_{\gamma}^N, W_{\gamma}^{+\infty};\C)^{G}
\]

We first decompose $G$ into $M=|G|/d$ cosets of the form $g^{(1)}\langle J_{W}\rangle,\dots,g^{(M)}\langle J_{W}\rangle$, where the elements $\{g^{(1)},\dots, g^{(M)}\}\subset G$ are those of Equation (\ref{cosets}). Then, the FJRW state space can be written as
\[
H_{\text{FJRW}}(W,G)=\bigoplus_{i=1}^{M}\bigoplus_{k=0}^{d-1}H^{N_{g^{(i)}J_{W}^{k}}}(\C_{{g^{(i)}J_{W}^{k}}}^N, W_{{g^{(i)}J_{W}^{k}}}^{+\infty};\C)^{G}.
\]

For an element $g\in\{g^{(1)},\dots,g^{(M)}\}$, write $g= (g_1,\dots,g_N)$, as in the previous section. Also, write $J_{W}=(\xi_d^{w_{1}},\dots, \xi_d^{w_{N}})$ where $\xi_{d}:=\text{exp}(2\pi i/d)$. It is easy to see that $N_{gJ_{W}^{k}}=0$ (i.e. the sector $\mathcal{H}_{gJ_{W}^{k}}$ is narrow) if and only if $\xi_{d}^{k}$ does not belong to $\bigcup_{j=1}^N\{\lambda \mid \lambda^{-w_j}=g_j\}$. Thus, the total contribution coming from the $g \langle J_{W}\rangle$ coset is equal to
\begin{equation}\label{FJRW-decomposition}
\bigoplus_{\lambda\in \boldsymbol{\mu}_{d}\cap \bigcup_{j=1}^N\{\alpha \mid \alpha^{-w_j}=g_j\}} H^{N_{g\bar{\lambda}}}(\C_{g\bar{\lambda}}^{N}, W_{g\bar{\lambda}}^{+\infty};\C)\oplus\bigoplus_{\lambda\in \boldsymbol{\mu}_{d}\backslash \bigcup_{j=1}^N\{\alpha \mid \alpha^{-w_j}=g_j\}} \phi_{g\bar{\lambda}}\C
\end{equation}

where $\phi_{g\bar{\lambda}}:=1\in\mathcal{H}_{g\bar{\lambda}}\cong\C$, and we have identified the elements of $\langle J_{W}\rangle$ with $\bar{\lambda}$, for $\lambda\in \boldsymbol{\mu}_{d}$.
\begin{remark}
The first summand of Equation (\ref{FJRW-decomposition}) corresponds to broad sectors whereas the second summand is the contribution arising from narrow sectors.
\end{remark}
\begin{remark}\label{narrow}
The FJRW bi-degree of $\phi_{g\bar{\lambda}}$ is given by $\left(a(g\bar{\lambda})-\sum_{j=1}^N q_j ,a(g\bar{\lambda})-\sum_{j=1}^N q_j\right)$, where $a(g\bar{\lambda})$ is the age defined in Equation (\ref{age}). It follows that in the modified FJRW state space of Equation (\ref{correspondence-equation}), this class will live in degree zero.
\end{remark}
\begin{remark}[Notation]
When $G=\langle J_W\rangle$, the sectors of the FJRW state space are classified by powers of $J_W$. When this is the case, we denote $\phi_{J_W^{k+1}}$ simply by $\phi_k$.
\end{remark}
\subsubsection{A combinatorial diagram}

We now present a combinatorial diagram that will allow us to keep track of the different components of the CR-cohomology and the FJRW state space. This model was first introduced by Boissi\`{e}re, Mann, and Perroni (see \cite{BMP}) in order to compute the orbifold cohomology of weighted projective spaces. The model was later expanded by Chiodo and Ruan (see \cite[Section 5]{Chiodo-Ruan}.)

For each one of the cosets represented by the elements $g^{(1)},\dots, g^{(M)}$ defined in Equation (\ref{cosets}), we can define a diagram that will allows to keep track of the coset's contribution to both the CR cohomology and the FJRW state space. We now outline how to construct this diagram for an element $g^{(i)}$. For the sake of legibility, denote $g^{(i)}=(g_{1}^{(i)},\dots,g_{N}^{(i)})$ simply by $g=(g_1,\dots,g_N)$.

\begin{enumerate}
\item Draw a semi-infinite ray 
\begin{equation}\label{definition1}
\{\rho\nu \in\C \mid \rho\geq 0, \rho\in \R\}
\end{equation}
for every $\nu\in \boldsymbol{\mu}_d\cup \bigcup_{j=1}^{N}\{\lambda \mid \lambda^{-w_j}=g_{j}\}$ .
\\
\item Mark dots in position $j\nu$ if there exists  $1\leq j \leq N$ such that
\begin{equation}\label{definition2}
\nu^{-w_{j}}=g_j.
\end{equation}
\item Mark further dots in position $(N+1)\nu$ if
\begin{equation}\label{definition3}
\nu\in  \bigcup_{j=1}^{N}\{\lambda \mid \lambda^{-w_j}=g_{j}\} \backslash \boldsymbol{\mu}_d.
\end{equation}
\end{enumerate}

\begin{remark}
Note that by construction all dots lie on rays. Also,\\ $\nu\in  \bigcup_{j=1}^{N}\{\lambda \mid \lambda^{-w_j}=g_{j}\} \backslash \boldsymbol{\mu}_d$ if and only if $g\bar{\nu}=(g_1\nu^{w_1},\dots, g_N\nu^{w_N})\notin G$.
\end{remark}
\subsubsection{Interpretation of the diagram:}
\begin{enumerate}
\item A ray with angular coordinate $2\pi k/d$ represents the sector $\mathcal{H}_{gJ^k}$ of the FJRW state space. Note that a ray will carry no dots if and only if the corresponding FJRW sector is narrow (i.e. the fixed locus of $gJ^{k}$ is trivial.)\\
\item Consider the dot $j\nu$ lying on the ray $\{\rho\nu \in\C \mid \rho\geq 0, \rho\in \R\}$. We say the dot is an $extremal$ $dot$ if there is no other dot with higher radial coordinate on the same ray and it is an $internal$ $dot$ otherwise. 
\\
\item An extremal dot of the form $j\nu$ represents the primitive cohomology of the $g\bar{\nu}$-sector in the Chen-Ruan cohomology: $H^{prim}(\{W_{g\bar{\nu}}=0\}_{\mathbb{P}(\boldsymbol{w}_{g\bar{\nu}})}/\widetilde{G};\C)\cong H^{N_{g\bar{\nu}}}(\C_{g\bar{\nu}}^{N},W_{g\bar{\nu}}^{+\infty};\C)^{G}$.
\\
\item Internal dots on a ray of the form $\{\rho\nu \in\C \mid \rho\geq 0, \rho\in \R\}$ represent products of hyperplane classes in $H^{\ast}(\{W_{g\bar{\nu}}=0\}_{\mathbb{P}(\boldsymbol{w}_{g\bar{\nu}})}/\widetilde{G};\C)$, i.e. $\boldsymbol{1}_{g\bar{\nu}}$, $P\boldsymbol{1}_{g\bar{\nu}}$, $P^2\boldsymbol{1}_{g\bar{\nu}}$,$\dots$ 
\end{enumerate}

\begin{remark}
Let $R$ be the total number of rays in the diagram and $D$ the total number of dots in the diagram. From Equation (\ref{definition1}) it is easy to see that
\[
R=d+\left |  \bigcup_{j=1}^{N}\{\lambda \mid \lambda^{-w_j}=g_{j}\} \backslash \boldsymbol{\mu}_d \right |
\]
and from Equations (\ref{definition2}) and (\ref{definition3}), it follows that
\[
D=\sum_{j=1}^{N}w_j + \left |  \bigcup_{j=1}^{N}\{\lambda \mid \lambda^{-w_j}=g_{j}\} \backslash \boldsymbol{\mu}_d \right |.
\]

Hence, the difference between the total number of dots and rays in the diagram equals
\begin{equation}\label{difference}
D-R=\sum_{j=1}^{N}w_j - d=-\K.
\end{equation}
\end{remark}
\subsubsection{Proof of Theorem \ref{cohomological-correspondence}}
We now use the diagram described above to construct a correspondence between the contribution to the CR cohomology coming from the coset $g^{(i)}\C^{\ast}$ and the contribution to the FJRW state space coming from the coset $g^{(i)}\langle J_{W}\rangle$, for $1\leq i \leq M$. As in the previous section, we let $g^{(i)}=g$ to simplify notation.

$Step$ 1: There is a one-to-one correspondence between non-empty rays and extremal dots in the diagram. We must distinguish between two cases. In the first case, consider a ray of the form $\{\rho\nu \in\C \mid \rho\geq 0, \rho\in \R\}$ with $\nu\in \boldsymbol{\mu}_d$. From Equation (\ref{definition2}), it follows that the number of dots on this ray is $N_{g\bar{\nu}}$. The first $N_{g\bar{\nu}}-1$ dots correspond to intersections of the hyperplane class in $\{W_{g\bar{\nu}}=0\}_{\mathbb{P}(\boldsymbol{w}_{g\bar{\nu}})}$. The extremal dot corresponds to the primitive cohomology of $H^{\ast}(\{W_{g\bar{\nu}}=0\}_{\mathbb{P}(\boldsymbol{w}_{g\bar{\nu}})}/\widetilde{G};\C)$ which is isomorphic to the broad sector $\mathcal{H}_{g\bar{\nu}}$ of the FJRW state space by Equation (\ref{hypercoho}). For the second case, consider a ray with $\nu\in  \bigcup_{j=1}^{N}\{\lambda \mid \lambda^{-w_j}=g_{j}\} \backslash \boldsymbol{\mu}_d$. It follows that $g\bar{\nu}\notin G$ and thus, by Lemma \ref{dichotomy}, $\{W_{g\bar{\nu}}=0\}_{\mathbb{P}(\boldsymbol{w}_{g\bar{\nu}})}={\mathbb{P}(\boldsymbol{w}_{g\bar{\nu}})}$. From Equations (\ref{definition2}) and (\ref{definition3}) the number of dots on this ray is $N_{g\bar{\nu}}+1$. The first $N_{g\bar{\nu}}$ dots correspond to intersections of the hyperplane class in ${\mathbb{P}(\boldsymbol{w}_{g\bar{\nu}})}$. The extremal dot corresponds to the primitive cohomology of ${\mathbb{P}(\boldsymbol{w}_{g\bar{\nu}})}$, which is trivial. 

In this way, the contributions to Chen-Ruan cohomology arising from primitive cohomology precisely correspond to the contributions to FJRW theory coming from broad sectors. Note also that if we take a (p,q)-class in either state space, the difference $p-q$ cancels the age shifts of Equations (\ref{CRstate}) and (\ref{FJRWstate}), yielding a degree-preserving correspondence between the modified Chen-Ruan and FJRW state spaces. 

$Step$ 2: Since non-empty rays and extremal dots are exactly matched up, it follows that the difference between the total number of internal dots (which correspond to intersections of hyperplane classes) and the total number of empty rays (which correspond to narrow sectors) in the diagram is equal to $D-R=-\K$ by Equation (\ref{difference}). By Remarks \ref{self-intersection} and \ref{narrow}, the contributions coming from intersections of hyperplane classes in Chen-Ruan cohomology and from narrow sectors in FJRW theory live in degree zero in their respective modified state spaces. Hence, to get a one-to-one correspondence in degree zero, we need to add an extra component of dimension $|G|/d(\sum_{j}w_j - d)$ to $\bigoplus_{p}H_{\text{FJRW}}^{p,p}(W,G)$ (we add one component of dimension $\sum_j w_j -d$ for each one the $M=|G|/d$ cosets) in the Fano case, and an extra component of dimension $|G|/d(d-\sum_{j}w_j )$ to $ \bigoplus_{p}H_{\text{CR}}^{p,p}([\mathcal{X}_{W}/\widetilde{G}];\C)$ in the general type case. This accounts for the definition of $\mathcal{A}_{FJRW}^0(W,G)$ in Equation (\ref{modified-FJRW-state-space-Fano}), and for the definition of $\mathcal{A}_{\text{CR}}^{0}([\mathcal{X}_{W}/\widetilde{G}];\C)$ in Equation (\ref{modified-Chen-Ruan-state-space-gt}).
\begin{flushright}$\square$\end{flushright}


\subsection{Examples}\label{examples}
\subsubsection{A smooth cubic four-fold} Let $\mathcal{X}^3$ be the smooth cubic four-fold in $\mathbb{P}^5$ defined by the vanishing locus $\{W=x_1^3+x_2^3+x_3^3+x_4^3+x_5^3+x_6^3=0\}$. We consider the case $G=\langle J_{W}\rangle$. The Fano index of this cubic four-fold is  $-\kappa_{\mathcal{X}^3}=\sum_i w_i -d =3$. The Hodge diamond of $\mathcal{X}^3$ has the form
\[
\begin{tikzpicture}

\node[] at (0,2) {1};

\node[] at (-0.5,1.5) {0};
\node[] at (0.5,1.5) {0};

\node[] at (-1,1) {0};
\node[] at (0,1) {1};
\node[] at (1,1) {0};

\node[] at (-1.5,0.5) {0};
\node[] at (-0.5,0.5) {0};
\node[] at (0.5,0.5) {0};
\node[] at (1.5,0.5) {0};

\node[] at (0,0) {21};
\node[] at (-1,0) {1};
\node[] at (1,0) {1};
\node[] at (-2,0) {0};
\node[] at (2,0) {0};

\end{tikzpicture}
\]

(see, for example, \cite{Hassett}). From this, we can directly compute the dimension of the modified Chen-Ruan state space: 

\[
\text{dim}\mathcal{A}_{\text{CR}}^n(\mathcal{X}^3;\C)=
\begin{cases}
h^{0,0}+h^{1,1}+h^{2,2}+h^{3,3}+h^{4,4}=25 & \text{if } n=0\\
h^{3,1}=1 & \text{if } n=2\\
h^{1,3}=1 & \text{if } n=-2\\
0 & \text{otherwise.}
\end{cases}
\]

The relevant information for the FJRW state space in contained in the following table:
\begin{center}
\begin{tabular}{ c || c | c | c | c | c | c || c | c}
                    
  $J_{W}^k$ & $x_1$ & $x_2$ & $x_3$ & $x_4$ & $x_5$ & $x_6$ & $\text{deg}_{\text{FJRW}}$ & ($h^{p,q}$| $p+q=\text{deg}_{\text{FJRW}}$) \\
  \hline     
   & & & & & & & & \\
  $J_{W}^0$ & 0 & 0 & 0 & 0 & 0 & 0 & 2 & $h^{2,0}=1,h^{1,1}=20,h^{0,2}=1$\\
  $J_{W}^1$ & 1 & 1 & 1 & 1 & 1 & 1 & 0 & $h^{0,0}=1$\\
  $J_{W}^2$ & 2 & 2 & 2 & 2 & 2 & 2 & 4 & $h^{2,2}=1$\\
  \end{tabular}
\end{center}

and thus, the dimension of the modified FJRW state space is given by
\[
\text{dim}\mathcal{A}_{\text{FJRW}}^n(W,\langle J_{W}\rangle)=
\begin{cases}
h^{0,0}+h^{1,1}+h^{2,2}+3=25 & \text{if } n=0\\
h^{2,0}=1 & \text{if } n=2\\
h^{0,2}=1& \text{if } n=-2\\
0 & \text{otherwise}
\end{cases}
\]
which gives the desired degree preserving isomorphism of modified state spaces.
\begin{figure}[h]

\resizebox{6.2cm}{5.5cm}{

\begin{tikzpicture}
\draw[dashed] (5,5) circle (6);
\draw[dashed] (5,5) circle (5);
\draw[dashed] (5,5) circle (4);
\draw[dashed] (5,5) circle (3);
\draw[dashed] (5,5) circle (2);
\draw[dashed] (5,5) circle (1);

\draw (5,5) -- (12,5);
\draw (5,5) -- (1.5,10.8);
\draw (5,5) -- (1.5,-0.8);

\fill[black] (6,5) circle (0.2);
\fill[black] (7,5) circle (0.2);
\fill[black] (8,5) circle (0.2);
\fill[black] (9,5) circle (0.2);
\fill[black] (10,5) circle (0.2);
\fill[black] (11,5) circle (0.2);

\end{tikzpicture}
}
\caption{Combinatorial diagram for the smooth cubic four-fold.}\label{cubic}
\end{figure}
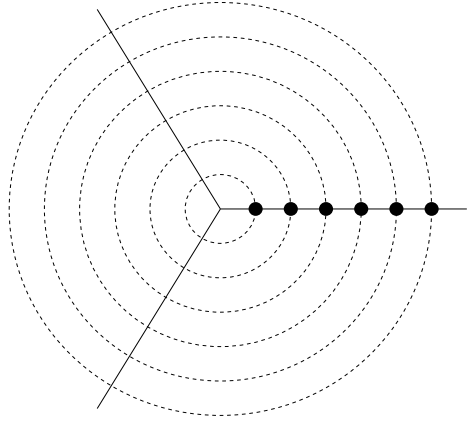

 Figure \ref{cubic} shows the combinatorial diagram for the cubic four-fold. The empty rays correspond to the narrow FJRW sectors $J_W^1$ and $J_W^2$. Note that the difference between the number of internal dots and the number of empty rays is precisely $-\kappa_{\mathcal{X}^3} =3$.
\subsubsection{A non-Gorenstein example}\label{non-gorenstein}
Consider the degree $8$ surface $\mathcal{X}^8$ cut out by $\{W=x_1^4+x_1x_2^2+x_3^2+x_4^2=0\}$ inside $\mathbb{P}(2,3,4,4)$. As in the previous example, let $G=\langle J_W\rangle$. Note that not all the weights divide the degree of the surface and therefore, this is a non-Gorenstein stack in weighted projective space. The Fano index of this surface is given by  $-\kappa_{\mathcal{X}^8}=13-8=5$.  We now compute the cohomology of the different sectors of the inertia stack of $\mathcal{X}^8$. Let $\zeta_4=\text{exp}(\frac{2\pi i}{4})$ and $\zeta_3=\text{exp}(\frac{2\pi i}{3})$. 
\begin{itemize}
\item Untwisted sector: its cohomology is the direct sum of three components of bi-degrees $(0,0)$, $(1,1)$ and $(2,2)$, arising from intersections with the hyperplane class, together with the primitive part of the cohomology.  This last component may be computed as the $\langle J_{W}\rangle$-invariant part of the Milnor ring of the singularity $W:\C^4\rightarrow \C$ (see, for example, \cite{Dolgachev, Steenbrink}). It has only one component, of bi-degree $(1,1)$.\\

\item $\zeta_4 $-sector: this sector corresponds to the locus $\{x_3^2+x_4^2=0\}\subset \mathbb{P}(4,4)$. This gives two orbifold points, both of CR bi-degree $\left(\frac{5}{4},\frac{5}{4}\right)$.\\

\item $\zeta_3,\zeta_3^2 $-sector: their cohomology is given by $H^{\ast}([\text{pt}/\mathbb{Z}_3];\C)\cong H^{\ast}(\text{pt};\C)=\C$ in bi-degrees $\left(\frac{2}{3},\frac{2}{3}\right)$ and $\left(\frac{4}{3},\frac{4}{3}\right)$, respectively.\\

\item $-1$-sector: this sector corresponds to the vanishing locus $\{x_1^4+x_3^2+x_4^2=0\}\subset\mathbb{P}(2,4,4)$. Its cohomology is isomorphic to $H^{\ast}(\mathbb{P}^1;\C)=\C\oplus\C$, in CR bi-degrees $\left(\frac{1}{2},\frac{1}{2}\right)$ and $\left(\frac{3}{2},\frac{3}{2}\right)$.\\

\item $\zeta_4^3 $-sector: as in the $\zeta_4$-sector, the cohomology corresponds to the contribution of two orbifold points of bi-degree $\left(\frac{3}{4},\frac{3}{4}\right)$.
\end{itemize}

Note that all contributions to the CR cohomology have bi-degrees of the from $(p,p)$. It follows that the dimension of modified CR state space is given by
\[
\text{dim}\mathcal{A}_{\text{CR}}^{n}(\mathcal{X}^8;\C)=
\begin{cases}
12 & \text{if } n=0 \\
0 & \text{otherwise.}
\end{cases}
\]

The following table contains all the relevant information on the LG side:
\begin{center}
\begin{tabular}{ c || c | c | c | c || c | c}
                    
  $J_{W}^k$ & $x_1$ & $x_2$ & $x_3$ & $x_4$ & $\text{deg}_{\text{FJRW}}$ & ($h^{p,q}$| $p+q=\text{deg}_{\text{FJRW}}$) \\
  \hline     
   & & & & & & \\
  $J_{W}^0$ & 0 & 0 & 0 & 0 & 3/4 & $h^{\frac{3}{8},\frac{3}{8}}=1$\\
  $J_{W}^1$ & 2 & 3 & 4 & 4 & 0 & $h^{0,0}=1$\\
  $J_{W}^2$ & 4 & 6 & 0 & 0 & 5/4 & $h^{\frac{5}{8},\frac{5}{8}}=1$\\
  $J_{W}^3$ & 6 & 1 & 4 & 4 & $1/2$ & $h^{\frac{1}{4},\frac{1}{4}}=1$\\
  $J_{W}^4$ & 0 & 4 & 0 & 0 &  & \\
  $J_{W}^5$ & 2 & 7 & 4 & 4 & 1 & $h^{\frac{1}{2},\frac{1}{2}}=1$\\
  $J_{W}^6$ & 4 & 2 & 0 & 0 & 1/4 & $h^{\frac{1}{8},\frac{1}{8}}=1$\\
  $J_{W}^7$ & 6 & 5 & 4 & 4 & 3/2 & $h^{\frac{3}{4},\frac{3}{4}}=1$\\
\end{tabular} 
\end{center} 
  From this we can compute the dimension of the modified FJRW state space:
  \[
  \text{dim}\mathcal{A}_{\text{FJRW}}^0(W,\langle J_{W}\rangle)=h^{\frac{3}{8},\frac{3}{8}}+h^{0,0}+h^{\frac{5}{8},\frac{5}{8}}+h^{\frac{1}{4},\frac{1}{4}}+h^{\frac{1}{2},\frac{1}{2}}+h^{\frac{1}{8},\frac{1}{8}}+h^{\frac{3}{4},\frac{3}{4}}+5=12,
  \]
  providing the desired isomorphism of modified state spaces.
  
\begin{figure}[h]
\resizebox{7cm}{6cm}{
\begin{tikzpicture}
\draw[dashed] (5,5) circle (5);
\draw[dashed] (5,5) circle (4);
\draw[dashed] (5,5) circle (3);
\draw[dashed] (5,5) circle (2);
\draw[dashed] (5,5) circle (1);
\draw (-1,5) -- (11,5);
\draw (5,-0.5) -- (5,10.5);
\draw (1,1) -- (9,9);
\draw (1,9) -- (9,1);

\draw[red] (5,5) -- (2,10.1);
\draw[red] (5,5) -- (2,-0.1);

\fill[black] (6,5) circle (0.2);
\fill[black] (7,5) circle (0.2);
\fill[black] (8,5) circle (0.2);
\fill[black] (9,5) circle (0.2);

\fill[black] (5,8) circle (0.2);
\fill[black] (5,9) circle (0.2);

\fill[black] (4,5) circle (0.2);
\fill[black] (2,5) circle (0.2);
\fill[black] (1,5) circle (0.2);

\fill[black] (5,2) circle (0.2);
\fill[black] (5,1) circle (0.2);

\fill[black] (2.5,9.3) circle (0.2);
\fill[black] (4.5,5.9) circle (0.2);

\fill[black] (2.5,0.7) circle (0.2);
\fill[black] (4.5,4.1) circle (0.2);

\end{tikzpicture}
}
\caption{Combinatorial diagram for a degree $8$ surface in $\mathbb{P}(2,3,4,4)$.}\label{degree-8}
\end{figure}
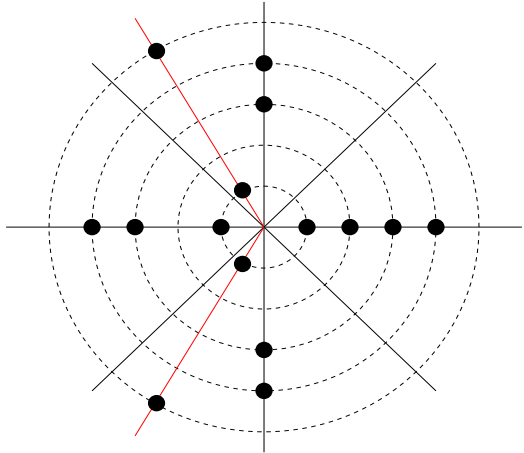

Figure \ref{degree-8} shows the combinatorial diagram for $\mathcal{X}^8$. The difference between the number of internal dots and the number of empty rays is $-\kappa_{\mathcal{X}^8}=5$. The rays in red represent the "non-Gorenstein" contributions to the Chen-Ruan cohomology.

\subsubsection{An example with $G\neq \langle J_W\rangle$} Consider the degree $6$ orbicurve $\mathcal{X}^6$ defined by the vanishing locus of $W=x_1^6+x_2^2+x_3^2$ inside $\mathbb{P}(1,3,3)$. The group of diagonal symmetries $\text{Aut}(W)$ contains the element $g:=(-1,-1,1)$. Define $G<\text{Aut}(W)$ by
\[
G:=\{g^lJ_{W}^k \mid 0\leq l \leq 1,\text{ }0\leq k \leq 5\}.
\]
G is a non-cyclic group of order $12$ and $\widetilde{G}=G/\langle J_W\rangle \cong \mathbb{Z}_2$. We now compute the Chen-Ruan cohomology of the quotient stack $[\mathcal{X}^6/\mathbb{Z}_2]$. We can compute the cohomology of inertia stack of $[\mathcal{X}^6/\mathbb{Z}_2]$ by using the decomposition found in Equations (\ref{decomposition}) and (\ref{hypercoho}). The non-empty sectors are:
\begin{itemize}
\item Untwisted sector: we need to compute $H^{\ast}(\mathcal{X}^6;\C)^{\widetilde{G}}$. As explained in \cite{Dolgachev}, $H^{\ast}(\mathcal{X}^6;\C)\cong H^{\ast}(\mathbb{P}^1;\C)$, and thus, the only contributions come from hyperplane classes of CR degrees $(0,0)$ and $(1,1)$.\\
\item $J_W^2$-sector: the contribution to this sector comes from $H^{\ast}([\{x_2^2+x_3^2=0\}_{\mathbb{P}(3,3)}/\mathbb{Z}_2];\C)$. The solution to $x_2^2+x_3^2=0$ in $\mathbb{P}(3,3)$ can be represented by the two orbifold points $\{(-i,1),(i,1)\}$. However, the action of $\mathbb{Z}_2$ identifies these two points and we end up with a one dimensional contribution to the cohomology, of CR degree $\left(\frac{1}{3},\frac{1}{3}\right)$.\\
\item $J_W^4$-sector: this case is similar to the $J_W^2$-sector. We have a one dimensional contribution of CR degree $\left(\frac{2}{3},\frac{2}{3}\right)$.\\
\item $g J_W^3$-sector: this sector corresponds to $[ \{x_1^6+x_2^2=0\}_{\mathbb{P}(1,3)}/\mathbb{Z}_2]$. It can be represented by the two orbifold points $(1,i)$ and $(1,-i)$, which are not identified by the action of $\mathbb{Z}_2$. Hence, we end up with two contributions to the cohomology, both of CR degree $\left(\frac{1}{2},\frac{1}{2}\right)$.
\end{itemize}
From this analysis, we find that the dimension of the modified Chen-Ruan state space is given by
\[
\text{dim}\mathcal{A}_{\text{CR}}^{n}([\mathcal{X}^6/\mathbb{Z}_2];\C)=
\begin{cases}
6 & \text{if } n=0 \\
0 & \text{otherwise.}
\end{cases}
\]

The corresponding information on the LG side is provided in the following tables:
\begin{center}
\begin{tabular}{ r || c | c | c || c | c }
  $g^lJ_{W}^k$ & $x_1$ & $x_2$ & $x_3$ & $\text{deg}_{\text{FJRW}}$ & ($h^{p,q}$| $p+q=\text{deg}_{\text{FJRW}}$) \\
  \hline     
   & & & & &  \\
  $J_{W}^0$ & 0 & 0 & 0 &  & \\
  $J_{W}^1$ & 1 & 3 & 3 & 0 & $h^{0,0}=1$\\
  $J_{W}^2$ & 2 & 0 & 0 &  & \\
  $J_{W}^3$ & 3 & 3 & 3 & 2/3 & $h^{\frac{1}{3},\frac{1}{3}}=1$\\
  $J_{W}^4$ & 4 & 0 & 0 &  & \\
  $J_{W}^5$ & 5 & 3 & 3 & 4/3 & $h^{\frac{2}{3},\frac{2}{3}}=1$\\
\end{tabular}
\end{center}

\begin{center}
\begin{tabular}{ r || c | c | c || c | c }
  $g^lJ_{W}^k$ & $x_1$ & $x_2$ & $x_3$ & $\text{deg}_{\text{FJRW}}$ & ($h^{p,q}$| $p+q=\text{deg}_{\text{FJRW}}$) \\
    \hline     
  & & & & &  \\
  $gJ_{W}^0$ & 3 & 3 & 0 &  & \\
  $gJ_{W}^1$ & 4 & 0 & 3 &  & \\
  $gJ_{W}^2$ & 5 & 3 & 0 &  & \\
  $gJ_{W}^3$ & 0 & 0 & 3 & 2/3 & $h^{\frac{1}{3},\frac{1}{3}}=1$\\
  $gJ_{W}^4$ & 1 & 3 & 0 & & \\
  $gJ_{W}^5$ & 2 & 0 & 3 &  & \\
\end{tabular} 
\end{center}
Thus, dimension of the modified FJRW state space is given by
  \[
\text{dim}\mathcal{A}_{\text{FJRW}}^n(W,G)=
\begin{cases}
4 +2\times1=6 & \text{if } n=0\\
0 & \text{otherwise}
\end{cases}
\]
from which we obtain a degree preserving isomorphism.
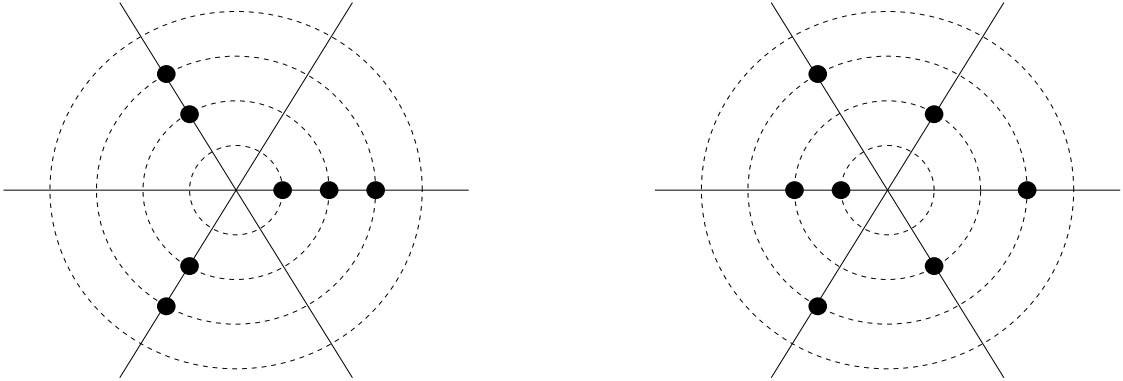
\begin{figure}[h]
\resizebox{15cm}{5cm}{
\begin{tikzpicture}

\draw[dashed] (-10,5) circle (4);
\draw[dashed] (-10,5) circle (3);
\draw[dashed] (-10,5) circle (2);
\draw[dashed] (-10,5) circle (1);
\draw (-15,5) -- (-5,5);

\draw (-7.5,0.8) -- (-12.5,9.2);
\draw (-7.5,9.2) -- (-12.5,0.8);

\fill[black] (-9,5) circle (0.2);
\fill[black] (-8,5) circle (0.2);
\fill[black] (-7,5) circle (0.2);

\fill[black] (-11,6.7) circle (0.2);
\fill[black] (-11.5,7.6) circle (0.2);

\fill[black] (-11.5,2.4) circle (0.2);
\fill[black] (-11,3.3) circle (0.2);


\draw[dashed] (4,5) circle (4);
\draw[dashed] (4,5) circle (3);
\draw[dashed] (4,5) circle (2);
\draw[dashed] (4,5) circle (1);

\draw (-1,5) -- (9,5);
\draw (6.5,0.8) -- (1.5,9.2);
\draw (6.5,9.2) -- (1.5,0.8);

\fill[black] (7,5) circle (0.2);
\fill[black] (3,5) circle (0.2);
\fill[black] (2,5) circle (0.2);

\fill[black] (2.5,7.6) circle (0.2);
\fill[black] (5,6.7) circle (0.2);

\fill[black] (2.5,2.4) circle (0.2);
\fill[black] (5,3.3) circle (0.2);

\end{tikzpicture}
}
\caption{Combinatorial diagram for the quotient stack $[\mathcal{X}^6/\mathbb{Z}_2]$.}\label{degree-6}
\end{figure}

Figure \ref{degree-6} illustrates the combinatorial diagrams for the two $\langle J_W\rangle$-cosets of $G$. In each of the diagrams the difference between the number of internal dots and the number of internal rays is 1.



\section{Quantum Correspondence: computing FJRW $I$-function}\label{quantum-theory}

\subsection{Genus Zero FJRW Theory}

In this section we define the genus zero FJRW theory. We also compute the FJRW $I$-function by using the formalism of Givental. All the results found in this section assume that $G=\langle J_{W}\rangle$ and that the Gorenstein condition $w_j | d$ for $j=1,\dots, N$ is satisfied. The presentation in this section follows the notation found in \cite{CIR}. For a more general construction of FJRW theory, the reader is directed to \cite{FJRa}.
\subsubsection{Pointed orbicurves and orbifold line bundles:}
\begin{definition}
A \textit{n-pointed orbicurve} $(\mathcal{C},p_1,\dots,p_n)$ is a proper and connected Deligne-Mumford stack of dimension one with at worst nodal singularities and which has $n$ marks $p_1,\dots,p_n$ on the smooth locus such that
\begin{enumerate}
\item the curve has possibly non-trivial stabilizers only at the marks and nodes;
\item the nodes are \textit{balanced}, i.e. in the local picture of the node $\{xy=0\}$, the action of the isostropy group $\boldsymbol{\mu}_r$ is given by
\[
(x,y)\mapsto (\zeta_r x,\zeta_r^{-1} y)
\]
\end{enumerate}
where $\zeta_r$ is a preferred generator of $\boldsymbol{\mu}_r$. The coarse underlying curve of the pointed orbicurve is denoted by $(|\mathcal{C}|,|p_1|,\dots, |p_n|)$ and its natural projection by $\rho : \mathcal{C}\rightarrow |\mathcal{C}|$. This projection is a flat morphism so, in particular, if $\mathcal{L}$ is an orbifold line bundle on $\mathcal{C}$, its pushforward $\rho_{\ast}\mathcal{L}$ is a line bundle over $|\mathcal{C}|$.
\end{definition}

\begin{definition}
For a positive integer $d$, an $n$-pointed orbicurve $(\mathcal{C},p_1,\dots,p_n)$ is said to be $d$-\textit{stable} if its coarse underlying curve $(|\mathcal{C}|,|p_1|,\dots, |p_n|)$ is stable and the stabilizers at the marks and nodes are isomorphic to $\boldsymbol{\mu}_d$.
\end{definition}

Given an orbifold line bundle $\mathcal{L}$ on a $d$-stable, $n$-pointed orbicurve $(\mathcal{C},p_1,\dots,p_n)$, we can define the \textit{age} of $\mathcal{L}$ at a node in the following way. Let  $\{xy=0\}\times\C$ be a local trivialization of $\mathcal{L}$ over a node $\sigma$ on $\mathcal{C}$. Then the preferred generator $\zeta_d$ of the isotropy group $\boldsymbol{\mu}_d$ acts on the local trivialization in the following way
\[
(x,y,v)\mapsto (\zeta_d x,\zeta_d^{-1}y,\zeta_d^k v), 
\]
where $k\in\{0,\dots,d-1\}$. The age of $\mathcal{L}$ at $\sigma$ is defined as the rational number
\begin{equation}
\mult := \frac{k}{d} \in [0,1).
\end{equation}
The age of $\mathcal{L}$ at a marked point in defined similarly.

\subsubsection{W-structures}

Given a $d$-stable, $n$-pointed orbicurve $(\mathcal{C},p_1,\dots,p_n)$, we define the invertible sheaf $\omega_{\mathcal{C},\text{log}}$ on $\mathcal{C}$ as the pullback of the dualizing sheaf $\omega_{\mathcal{|C|}}$ on $|\mathcal{C}|$ twisted at the points $|p_1|,\dots,|p_n|$, i.e.
\begin{equation}
\omega_{\mathcal{C},\text{log}}:=\rho^{\ast}\omega_{|\mathcal{C}|,\text{log}}:=\rho^{\ast}(\omega_{|\mathcal{C}|}(|p_1|+\dots +|p_n|)).
\end{equation}
\begin{definition}
A $d$-\textit{spin} structure on a pointed orbicurve $\mathcal{C}$ is an orbifold line bundle $\mathcal{L}\rightarrow\mathcal{C}$ together with an isomorphism $\phi:\mathcal{L}^{\otimes d}\IsoTo\omega_{\mathcal{C},\text{log}} $.

Given a quasi-homogeneous polynomial $\W$ of the form described in Equation (\ref{polynomial}), a $W$-\textit{structure} on a pointed orbicurve $\mathcal{C}$ is a collection of orbifold line bundles $\mathcal{L}_1,\dots, \mathcal{L}_N$ together with isomorphisms
\begin{equation}
\varphi_i:\bigotimes_{j=1}^N\mathcal{L}_j^{b_{i,j}}\IsoTo\omega_{\mathcal{C},\text{log}},\quad i=1,\dots,s.
\end{equation}
\end{definition}
A $d$-spin structure $\mathcal{L}\rightarrow\mathcal{C}$ gives rise to a $W$-structure in a natural way. Set $\mathcal{L}_j=\mathcal{L}^{\otimes w_j}$, by the quasi-homogeneity of $\W$ we get a natural isomorphism
\[
\bigotimes_{j=1}^N\mathcal{L}_j^{b_{i,j}}\cong\mathcal{L}^{\otimes d}\cong\omega_{\mathcal{C},\text{log}}.
\]
It is important to note that not all $W$-structures arise in this way but, in this paper, we restrict ourselves to the case in which $W$-structures arise from $d$-spin structures.
\begin{remark}
Given a $d$-spin structure $\mathcal{L}\rightarrow\mathcal{C}$ and a marked point $p\in \mathcal{C}$ with $\text{age}_{p}(\mathcal{L})=k/d$, the prefered generator $\zeta_{d}$ of $\boldsymbol{\mu}_d$ acts on the natural $W$-structure $(\mathcal{L}^{w_1},\dots ,\mathcal{L}^{w_N})$ as $(\zeta_d^{w_1k},\dots , \zeta_d^{w_Nk})=J_{W}^k$. For this reason we associate such a point $p\in\mathcal{C}$ with the FJRW-sector $\mathcal{H}_{J^k}$.
\end{remark}
\subsubsection{The moduli space}

Given integers $0\leq k_1,\dots, k_n\leq d-1$, we define$ \text{\calligra{W}}_{0,n}^{\quad d}(k_1,\dots, k_n)$ to be the moduli stack of $d$-stable orbicurves $\mathcal{C}$ of genus zero with $n$ marked points $p_1,\dots,p_n$ and endowed with a $d$-spin structure $\mathcal{L}\rightarrow \mathcal{C}$ such that $\text{age}_{p_i}(\mathcal{L})=  \genfrac{\langle}{\rangle}{}{}{k_i+1}{d}$ for $i=1,\dots,n$, i.e.
\begin{equation}
\begin{split}
\text{\calligra{W}}_{0,n}^{\quad d}(k_1,\dots,& k_n):=\\
&\left\{(\mathcal{C},p_1,\dots,p_n;\mathcal{L};\varphi: \mathcal{L}^{\otimes d}\IsoTo \w )\middle |  \text{  age}_{p_i}(\mathcal{L})=  \genfrac{\langle}{\rangle}{}{}{k_i+1}{d}\right\}/\text{isom.}
\end{split}
\end{equation}

Forgetting the $W$-structure and the orbifold structure gives a morphism
\[
\text{st}:\Wmod\rightarrow \overline{\mathcal{M}}_{0,n}.
\]
\begin{remark}
The stack $\Wmod$ corresponds to the stack $\mathcal{R}_d(e^{2\pi i \Theta_1},\dots, e^{2\pi \Theta_n})$ defined in \cite{Chiodo-Ruan2} as soon as $\Theta_i=\left\langle \frac{k_i+1}{d}\right \rangle$ for $i=1,\dots, n$. We have slightly changed the notation to obtain simpler formulas for the computations of FJRW invariants.
\end{remark}

\begin{lemma}\label{moduli-lemma}
Let $n>0$. The stack $\Wmod$ is smooth, proper, and of Deligne-Mumford type. It is non-empty if and only if $\mathcal{X}(\mathcal{L})=1+\frac{(n-2)}{d}-\sum_{i=1}^n  \genfrac{\langle}{\rangle}{}{}{k_i+1}{d}$ is an integer.
\end{lemma}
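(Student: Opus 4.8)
The plan is to separate the two assertions. The geometric properties (smoothness, properness, and Deligne--Mumford type) follow from the established theory of moduli of spin curves, whereas the non-emptiness criterion is a degree computation on the coarse curve that I would carry out by hand.

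For the geometric properties I would invoke the identification in the preceding remark: under $\Theta_i=\langle (k_i+1)/d\rangle$, the stack $\Wmod$ is the stack $\mathcal{R}_d$ of $d$-th roots of $\omega_{\mathcal{C},\text{log}}$ studied in \cite{Chiodo-Ruan2}, whose smoothness, properness, and Deligne--Mumford type are established there (and, in the general FJRW setting, in \cite{FJRa}). The essential point is that the forgetful morphism $\text{st}:\Wmod\to\overline{\mathcal{M}}_{0,n}$ is quasi-finite, with fibres parametrizing $d$-th roots, so the three properties descend from those of the smooth, proper, Deligne--Mumford stack $\overline{\mathcal{M}}_{0,n}$. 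I would not reproduce this standard argument.

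The substance is the non-emptiness criterion, which I would prove by identifying $\mathcal{X}(\mathcal{L})$ with the holomorphic Euler characteristic $\chi(|\mathcal{C}|,\rho_{\ast}\mathcal{L})$ of the pushforward line bundle on the genus-zero coarse curve. First, the isomorphism $\mathcal{L}^{\otimes d}\cong\omega_{\mathcal{C},\text{log}}$ gives $d\deg(\mathcal{L})=\deg(\omega_{\mathcal{C},\text{log}})=2g-2+n=n-2$, hence the orbifold degree is $\deg(\mathcal{L})=(n-2)/d$. Next, the standard relation $\deg(\rho_{\ast}\mathcal{L})=\deg(\mathcal{L})-\sum_{i=1}^n\text{age}_{p_i}(\mathcal{L})$ (with the balanced condition making the node contributions cancel), together with $\text{age}_{p_i}(\mathcal{L})=\langle(k_i+1)/d\rangle$, yields $\deg(\rho_{\ast}\mathcal{L})=(n-2)/d-\sum_i\langle(k_i+1)/d\rangle$. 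Riemann--Roch on the genus-zero coarse curve then gives $\chi(\rho_{\ast}\mathcal{L})=\deg(\rho_{\ast}\mathcal{L})+1$, which is exactly $\mathcal{X}(\mathcal{L})$. The ``only if'' direction is now immediate: if a $d$-spin structure exists then $\rho_{\ast}\mathcal{L}$ is an honest line bundle, so $\deg(\rho_{\ast}\mathcal{L})\in\Z$ and hence $\mathcal{X}(\mathcal{L})\in\Z$. For the ``if'' direction I would construct a point of $\Wmod$: on the smooth genus-zero coarse curve $\mathbb{P}^1$ with $n$ marks choose the line bundle $N$ of degree $\deg(\rho_{\ast}\mathcal{L})$ (an integer by hypothesis), form the orbicurve $\mathcal{C}$ with $\boldsymbol{\mu}_d$ isotropy at the marks, and build $\mathcal{L}$ as the orbifold line bundle with $\rho_{\ast}\mathcal{L}=N$ and local monodromy $\langle(k_i+1)/d\rangle$ at $p_i$. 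One verifies $\mathcal{L}^{\otimes d}\cong\omega_{\mathcal{C},\text{log}}$ by noting that both sides share the same orbifold degree and the same trivial monodromy at each mark, since $d\cdot\langle(k_i+1)/d\rangle\equiv 0\pmod 1$; on $\mathbb{P}^1$ such a bundle is determined up to isomorphism by these data because $\text{Pic}(\mathbb{P}^1)$ is torsion-free.

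The main obstacle I anticipate is the bookkeeping of the age data: verifying that the prescribed ages are globally consistent with $\mathcal{L}^{\otimes d}\cong\omega_{\mathcal{C},\text{log}}$, and that the construction extends across the boundary of $\overline{\mathcal{M}}_{0,n}$ with the balanced-node condition, where the two branch contributions to the age sum to an integer. Away from the boundary the argument reduces to a clean degree count on $\mathbb{P}^1$; the delicate point is confirming that the integrality of $\mathcal{X}(\mathcal{L})$ is precisely what permits a $d$-th root to exist and that no additional torsion obstruction arises on genus-zero nodal curves.
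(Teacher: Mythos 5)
Your proposal is correct and follows essentially the same route as the paper, which simply cites \cite{Chiodo} for smoothness, properness, and the Deligne--Mumford property, and the Riemann--Roch theorem for orbicurves \cite{AGV} for the non-emptiness criterion. You have merely unwound the Riemann--Roch citation into the explicit degree computation $\deg(\rho_{\ast}\mathcal{L})=(n-2)/d-\sum_i\langle(k_i+1)/d\rangle$ and the construction of a $d$-spin structure on $\mathbb{P}^1$ when that degree is integral, which is exactly what the cited result encapsulates.
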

\begin{proof}
The first statement follows from \cite{Chiodo}. The second statement is a consequence of the Riemann-Roch theorem for orbicurves (see \cite{AGV}.)
\end{proof}
As in the case of the moduli space of stable curves, we have a universal orbicurve $\pi:\text{\calligra{C}}\longrightarrow \Wmod$. We also have a universal $d$-spin structure $\text{\calligra{L}}\longrightarrow\text{\calligra{C}}$.

\subsubsection{Enumerative geometry of FJRW theory}
We now define the invariants for FJRW theory. They are defined in terms of the virtual fundamental class constructed in \cite{FJRb}. Given integers $0\leq k_1,\dots , k_n \leq d-1$, the virtual cycle $[\Wmod]^{\text{vir}}$ lies in
\[
H_{2D(k_1,\dots,k_n)}(\Wmod ;\C)\otimes \bigotimes_{i=1}^n H_{N_{J^{k_i+1}}}(\C_{J^{k_i+1}}^N,W_{J^{k_i+1}}^{+\infty};\C)^{\langle J\rangle},
\]
where $D(k_1,\dots ,k_n) = n-3+\sum_{j=1}^N \mathcal{X}(\mathcal{L}^{\otimes w_j})$, with $\mathcal{L}$ a $d$-spin structure coming from $\Wmod$. By regarding the above relative homology as dual to the FJRW sectors $\mathcal{H}_{J^{k_i+1}}$, the virtual cycle defines a linear map
\begin{equation}
\begin{split}
\bigotimes_{i=1}^{n}&\mathcal{H}_{J^{k_i+1}}\longrightarrow H_{2D(k_1,\dots,k_n)}(\Wmod ;\C) \\
&\otimes_i \alpha_i \mapsto [\Wmod]^{\text{vir}}\cap \prod_{i=1}^n \alpha_i.
\end{split}
\end{equation}
For non-negative integers $a_1,\dots ,a_n$ and state space elements $\alpha_i\in \mathcal{H}_{J^{k_i+1}}$, $i=1,\dots, n$, the genus zero FJRW invariants are defined as
\begin{equation}\label{invariants}
\langle \tau_{a_1}(\alpha_1),\dots, \tau_{a_n}(\alpha_n)\rangle_{0,n}^{\text{FJRW}}:= \biggl ( [\Wmod]^{\text{vir}}\cap \prod_{i=1}^n \alpha_i \biggr )\cap \prod_{i=1}^n \widetilde{\psi}_{i}^{a_i}
\end{equation}
where the psi-classes $\widetilde{\psi}$ are defined via pullback under the morphism $\text{st}:\Wmod\rightarrow \overline{\mathcal{M}}_{0,n}$ from the usual psi-classes on $ \overline{\mathcal{M}}_{0,n}$. To simplify the notation, we write $\alpha_i$ instead of $\tau_{0}(\alpha_i).$
\begin{remark}
It is possible to generalize the above construction (i.e. moduli space, virtual cycle and invariants) to all genera. For details, we refer the reader to \cite{FJRa, FJRb}.
\end{remark}

\subsubsection{Restricting the invariants to narrow sectors}
The computation of FJRW invariants can be greatly simplified if we restrict them to narrow sectors. Define the set $\textbf{Nar}\subset \{0,\dots,d-1\}$ as
\begin{equation}\label{narrow-set}
\begin{split}
\textbf{Nar}:=&\left\{ k\in\{0,\dots,d-1\}\middle | \text{ }  \mathcal{H}_{J_W^{k+1}} \text{ is narrow} \right\}\\
& \left\{ k\in\{0,\dots,d-1\}\middle |\text{ }  (k+1)w_j\notin d\mathbb{Z}, \text{ for all } j=1,\dots,N \right\}.
\end{split}
\end{equation}
We denote the restriction of the FJRW state space to narrow sectors by 
\[
H_{\text{FJRW}}^{nar}(W):=\bigoplus_{k\in \textbf{Nar}}\mathcal{H}_{J_W^{k+1}}.
\]
Let $\pi:\text{\calligra{C}}\longrightarrow \Wmod$ be the universal curve and $\text{\calligra{L}}\longrightarrow\text{\calligra{C} }$  be the universal $d$-spin structure mentioned in the previous section. Define the following virtual bundle known as the \textit{obstruction bundle} 
\[
-\R\pi_{\ast}\bigoplus_{j=1}^N\text{\calligra{L } }^{\otimes w_j}=R^1\pi_{\ast}\bigoplus_{j=1}^N\text{\calligra{L } }^{\otimes w_j}-R^0\pi_{\ast}\bigoplus_{j=1}^N\text{\calligra{L } }^{\otimes w_j}.
\]
The following lemma (\cite[Lemma 2.3]{CIR}) shows that when restricting to classes coming from narrow sectors, the obstruction bundle becomes an honest vector bundle.

\begin{lemma}\label{obstruction}
Suppose $\mathcal{H}_{J^{k_i+1}}$ is a narrow sector for $i=1,\dots, n$. Then $H^0(\mathcal{C},\mathcal{L}^{\otimes w_j})$ vanishes for $j=1,\dots, N$ and for all $(\mathcal{C},p_1,\dots,p_n;\mathcal{L})$ in $\Wmod$. As a consequence, the obstruction bundle becomes the locally free bundle $R^1\pi_{\ast}\bigoplus_{j=1}^N\text{\calligra{L } }^{\otimes w_j}$. 
\end{lemma}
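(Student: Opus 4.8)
The plan is to reduce the vanishing of $H^0$ to a degree computation on the coarse curve, and then to control that degree using narrowness of the marked points together with the balancing condition at the nodes. First I would use that the projection $\rho:\mathcal{C}\to|\mathcal{C}|$ is flat, so that $H^0(\mathcal{C},\mathcal{L}^{\otimes w_j})\cong H^0(|\mathcal{C}|,\rho_\ast\mathcal{L}^{\otimes w_j})$, turning the problem into one about an honest line bundle on a genus-zero nodal curve (a tree of $\mathbb{P}^1$'s). On a smooth irreducible orbicurve the key input is orbifold Riemann--Roch: writing $\delta_j:=d/w_j$ (an integer by the Gorenstein hypothesis) and using $\mathcal{L}^{\otimes d}\cong\w$ to get $\deg_{\mathrm{orb}}\mathcal{L}^{\otimes w_j}=w_j(m-2)/d$ on a component with $m$ special points, the age-shift formula $\deg\rho_\ast L=\deg_{\mathrm{orb}}L-\sum_i\mathrm{age}_{p_i}(L)$ yields the clean expression
\[
\deg\rho_\ast\big(\mathcal{L}^{\otimes w_j}|_v\big)=\frac{m-2-\sum_{i}\big(s_i \bmod \delta_j\big)}{\delta_j},
\]
where $s_i/d$ is the age of $\mathcal{L}$ at the $i$-th special point. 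Narrowness of a mark means exactly that $\delta_j\nmid s_i$, so each marked point contributes a term $\ge 1$ to the sum, which already forces the degree to be negative on any component carrying enough marks (in particular on a smooth $\mathcal{C}$, where $\sum_i(s_i\bmod\delta_j)\ge m> m-2$). Since a line bundle of negative degree on $\mathbb{P}^1$ has no sections, this settles the irreducible case.

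The remaining work is the nodal case, and this is where I expect the main difficulty. A single component can have non-negative degree: at a node where $\delta_j\mid s_i$ the age of $\mathcal{L}^{\otimes w_j}$ vanishes and that node contributes nothing to the sum, so a component with two such nodes and few marks can have $\deg\rho_\ast(\mathcal{L}^{\otimes w_j}|_v)\ge 0$. Thus a naive per-component argument fails and one must exploit the global tree structure. I would first partially normalize $\mathcal{C}$ at every node where $\mathcal{L}^{\otimes w_j}$ is \emph{twisted} (positive age); by the balancing condition such a node has positive age on \emph{both} branches, so it imposes no gluing condition on sections and becomes a pair of twisted marked points. This decomposes $H^0(\mathcal{C},\mathcal{L}^{\otimes w_j})$ as a direct sum over the resulting clusters, each of which carries a genuine line bundle on a nodal genus-zero curve all of whose interior nodes are untwisted.

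On each such cluster I would invoke the standard criterion that a line bundle $\mathcal{F}$ on a connected genus-zero nodal curve has $H^0=0$ precisely when $\deg(\mathcal{F}|_Y)<b_Y$ for every connected subcurve $Y$, where $b_Y$ is the number of nodes joining $Y$ to its complement. Summing the per-component formula over the components of $Y$, using $E=V-1$ for the subtree, bounding each marked point and each twisted external point below by $1$ and each untwisted internal node by $0$ (via balancing, whereas a twisted internal node would contribute exactly $\delta_j$), I expect to obtain
\[
\deg\rho_\ast\big(\mathcal{L}^{\otimes w_j}|_Y\big)\le \frac{b_Y-2}{\delta_j}-E_+\;<\;b_Y,
\]
where $E_+$ counts the twisted internal nodes of $Y$; the inequality holds for all $\delta_j\ge 1$ and all $b_Y\ge 0$. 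This verifies the criterion on every subcurve and gives $H^0(\mathcal{C},\mathcal{L}^{\otimes w_j})=0$ for all curves in $\Wmod$. The hardest point to get right is precisely this bookkeeping at the nodes --- correctly matching the two branch-ages through the balancing condition and confirming that $\rho_\ast$ at a twisted node contributes no section --- rather than any single hard estimate. Finally, fiberwise vanishing of $H^0$ together with cohomology and base change gives $R^0\pi_\ast\bigoplus_j\mathcal{L}^{\otimes w_j}=0$ and the local freeness of $R^1\pi_\ast\bigoplus_j\mathcal{L}^{\otimes w_j}$, so the virtual obstruction bundle $-R\pi_\ast\bigoplus_j\mathcal{L}^{\otimes w_j}$ reduces to this honest vector bundle, as claimed.
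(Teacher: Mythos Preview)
The paper does not actually prove this lemma: it only states it and cites \cite[Lemma~2.3]{CIR} for the proof. Your proposal is a correct, self-contained argument and is essentially the standard one that reference carries out.

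A couple of remarks on your write-up. First, the partial normalization at twisted nodes is not merely a convenience: at a node where $\mathcal{L}^{\otimes w_j}$ has nonzero age on both branches, $\rho_\ast\mathcal{L}^{\otimes w_j}$ is \emph{not} locally free on the coarse curve (it is the pushforward from the partial normalization), so the line-bundle criterion you invoke would not apply directly to the full coarse curve. After normalizing at twisted nodes, every remaining node is untwisted for $\mathcal{L}^{\otimes w_j}$, the pushforward is an honest line bundle on each cluster, and your degree count
\[
\deg\rho_\ast\bigl(\mathcal{L}^{\otimes w_j}\big|_Y\bigr)\;\le\;\frac{b_Y-2}{\delta_j}\;<\;b_Y
\]
goes through exactly as you wrote (with $E_+=0$ inside a cluster). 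Second, the subcurve criterion you quote---$H^0(C,\mathcal{F})=0$ for a line bundle on a tree of $\mathbb{P}^1$'s iff $\deg\mathcal{F}|_Y<b_Y$ for every connected subcurve $Y$---is correct but perhaps less ``standard'' than you suggest; it is worth a one-line justification (induct on the number of components by removing a component of negative degree, which exists since the total degree is $<0$). With those two points made explicit, your argument is complete.
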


Using the concavity axiom described in \cite{FJRa} Equation (57), we can write the genus zero FJRW invariants in terms of the top Chern class of the dual of the obstruction bundle. Thus, if $\alpha_i\in \mathcal{H}_{J^{k_i+1}}$ are narrow classes for $i=1,\dots,n$, the genus zero FJRW invariants defined in Equation (\ref{invariants}) can be expressed as
\begin{equation}
\langle \tau_{a_1}(\alpha_1),\dots, \tau_{a_n}(\alpha_n)\rangle_{0,n}^{\text{FJRW}} =\int_{[\Wmod]}\prod_{i=1}^n \widetilde{\psi}_{i}^{a_i} \text{ }\cup \prod_{j=1}^N c_{\text{top}}\biggl (\biggl ( R^1\pi_{\ast}\text{\calligra{L } }^{\otimes w_j}\biggr )^{\ast}\biggr )
\end{equation}
where $[\Wmod] $ is the standard fundamental class of the moduli space. For dimensional reasons, the genus zero invariants will vanish unless $D(k_1,\dots, k_n)=\sum_i a_i$. For narrow classes this condition is equivalent to
\begin{equation}
\sum_{i=1}^n (a_i+\frac{1}{2}\text{deg}_{\text{FJRW}}(\alpha_i))=n-3+N-2\sum_{j=1}^N \frac{w_j}{d}.
\end{equation}

\subsubsection{The extended FJRW state space}

We define an \textit{extension} of the space of FJRW narrow sectors to be
\begin{equation}\label{definition-extended-space}
H_{\text{FJRW}}^{ext}(W):=\bigoplus_{k\in \textbf{Nar}}\mathcal{H}_{J_W^{k+1}}\oplus\bigoplus_{\substack{0\leq k \leq d-1\\ k\notin \textbf{Nar}}}\C \phi_k.
\end{equation}
If $k\in \textbf{Nar}$, define $\phi_{k}:=\textbf{1}\in\C\cong \mathcal{H}_{J_W^{k+1}}$. The extended FJRW state space can then be written as
\[
H_{\text{FJRW}}^{ext}(W)=\bigoplus_{k\in \textbf{Nar}}\C\phi_k \oplus\bigoplus_{\substack{0\leq k \leq d-1\\k\notin \textbf{Nar}}}\C \phi_k=\bigoplus_{k=0}^{d-1} \C\phi_k.
\]
The extended state space carries a natural grading
\begin{equation}
\text{deg}_{\text{FJRW}}(\phi_k)=2\sum_{j=1}^N  \left\langle \frac{kw_j}{d}\right\rangle ,
\end{equation}
and a natural pairing given by
\begin{equation}\label{grading}
(\phi_i,\phi_j):=
\begin{cases}
1 &\text{if } i+j+2\equiv 0 \text{ (mod d)}\\
0 &\text{otherwise}.
\end{cases}
\end{equation}
The additional states $\phi_k$ with $k\notin \textbf{Nar}$ will play the role of place-holders in the extended theory. Thus, in the extended theory, we want to define invariants that will vanish as soon as one of the entries is not narrow and that will equal the original FJRW invariants if all the insertions correspond to narrow sectors. This is an important property that will allow us to recover the original theory from the extended theory.
 
For $0\leq k_1,\dots,k_n \leq d-1$, let $\pi:\text{\calligra{C}}\longrightarrow \Wmod$ be the universal curve and $\text{\calligra{L}}\longrightarrow \text{\calligra{C }}$, the universal $d$-spin structure. Let $\mathcal{D}_i\subset\text{\calligra{C }}$ denote the divisor of the $i$-th marking. We define the {\it extended universal $d$-spin structure} by
\[
\Wext :=\Luni \text{   }\otimes \mathcal{O}_{\text{\calligra{C }}}\left(-\sum_{i=0}^n \mathcal{D}_i\right).
\]
From this definition, it is clear that 
\[
\text{age}_{\mathcal{D}_i}\left(\Wext\right) =\left\langle \frac{k_i+1}{d}-\frac{1}{d} \right\rangle=\frac{k_i}{d}.
\]
Consider the forgetful morphism $\rho:\text{\calligra{C}}\longrightarrow \Ccoarse$ that forgets the stack-theoretic structure along the marking divisors $\mathcal{D}_1,\dots, \mathcal{D}_n$ but not along the nodes, and the coarse projection morphism $|\pi |:\Ccoarse\longrightarrow \left | \Wmod \text{ }\right |$. We then have the following isomorphism of orbifold bundles
\begin{equation}\label{Wext-root}
\Wext^{\otimes d}\cong \rho^{\ast}\omega_{|\pi |},
\end{equation}
where $\omega_{|\pi |}$ is the relative dualizing sheaf of the coarse projection $|\pi|$. We define the {\it extended obstruction bundle} to be
\[
-\mathbb{R}\pi_{\ast}\left( \bigoplus_{j=1}^N \Wext^{\otimes w_j}\right).
\]

The following lemma ensures that the extended obstruction bundle is an orbifold vector bundle:
\begin{lemma}
Let $\mathcal{C}$ be a fiber of $\pi:\text{\calligra{C}}\longrightarrow \Wmod$. Then, 
\[
H^0\left (\mathcal{C},\left. \Wext^{\otimes w_j}\right |_{\mathcal{C}}\right)=0,
\]
for $j=1,\dots, N$.
\end{lemma}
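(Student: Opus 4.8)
The plan is to deduce the vanishing from a degree estimate on the coarse genus-zero curve, combined with an inductive pruning of its dual tree. The defining relation of Equation~(\ref{Wext-root}) is what feeds the degree count, and the twist by $-\sum_i\mathcal{D}_i$ is what makes the estimate work even at the non-narrow markings that the extended theory is designed to accommodate. First I would reduce to the coarse curve: since $n>0$, Lemma~\ref{moduli-lemma} guarantees that $\mathcal{C}$ is a genus-zero $d$-stable orbicurve, so its coarse space $|\mathcal{C}|$ is a tree of $\mathbb{P}^1$'s. Writing $\rho:\mathcal{C}\to|\mathcal{C}|$ for the coarsening morphism, the pushforward of an orbifold line bundle is an honest line bundle and $H^0(\mathcal{C},\left.\Wext^{\otimes w_j}\right|_{\mathcal{C}})\cong H^0(|\mathcal{C}|,\rho_*\left.\Wext^{\otimes w_j}\right|_{\mathcal{C}})$, because $\rho_*$ identifies global sections with $\boldsymbol{\mu}_d$-invariant ones. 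Thus it suffices to control $\deg\rho_*\left.\Wext^{\otimes w_j}\right|_{C_i}$ on each irreducible component $C_i\cong\mathbb{P}^1$.

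The second step is the degree computation. Restricting Equation~(\ref{Wext-root}) to the fiber gives $\left.\Wext^{\otimes d}\right|_{\mathcal{C}}\cong\rho^*\omega_{|\mathcal{C}|}$, so on a component with $n_i$ nodes the orbifold degree of $\Wext^{\otimes w_j}$ is $\tfrac{w_j}{d}\deg_{C_i}\omega_{|\mathcal{C}|}=\tfrac{w_j}{d}(n_i-2)$. The standard relation between orbifold and coarse degree then yields
\[
\deg\rho_*\left.\Wext^{\otimes w_j}\right|_{C_i}=\frac{w_j}{d}(n_i-2)-\sum_{\sigma\in C_i}\text{age}_\sigma(\Wext^{\otimes w_j}),
\]
where $\sigma$ ranges over the markings and nodes of $C_i$, every age lies in $[0,1)$, and at a node the two branch-ages sum to an integer by the balancing condition. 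In particular a leaf component ($n_i=1$) has coarse degree $-\tfrac{w_j}{d}-\sum_\sigma\text{age}_\sigma<0$.

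Finally I would run the pruning induction. A line bundle of negative degree on $\mathbb{P}^1$ has no nonzero sections, so any $s\in H^0$ vanishes on every leaf, hence at the node attaching that leaf to the remainder of the tree. Deleting the leaf and recording this vanishing as a twist-down by the attaching point, a component that becomes a new leaf after acquiring $p$ such points has $n_i=1+p$ and effective degree
\[
-\frac{w_j}{d}-p\left(1-\frac{w_j}{d}\right)-\sum_{\sigma\in C_i}\text{age}_\sigma,
\]
which is again strictly negative because $0<w_j<d$; here the Gorenstein hypothesis $w_j\mid d$ is used. Iterating, $s$ must vanish on every component down to the last, whose effective degree is negative by the same formula, so $s\equiv 0$ and $H^0=0$.

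The hard part is exactly the node bookkeeping. Per-component negativity can genuinely fail: a component carrying two nodes and a single marking can support a degree-zero summand once all the relevant ages vanish, so one cannot argue componentwise and must instead use the balancing condition together with the pruning (gluing) argument to kill the global sections. Keeping the twisted-down degrees strictly negative at every stage of the induction is precisely where the inequality $w_j<d$ enters, and verifying the leaf estimate under the node balancing relation is the main technical point to be checked carefully.
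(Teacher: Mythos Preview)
Your argument is correct, but it takes a substantially longer path than the paper's. The paper dispatches the lemma in two lines: by Equation~(\ref{Wext-root}) and the Gorenstein hypothesis $w_j\mid d$, the bundle $\Wext^{\otimes w_j}$ is a $(d/w_j)$-th root of $\rho^*\omega_{|\mathcal{C}|}$; since $|\mathcal{C}|$ has arithmetic genus zero, $H^0(\mathcal{C},\rho^*\omega_{|\mathcal{C}|})=H^0(|\mathcal{C}|,\omega_{|\mathcal{C}|})=0$, and any nonzero section of a root would tensor-power to a nonzero section of $\rho^*\omega_{|\mathcal{C}|}$, a contradiction. No component-by-component analysis or pruning is needed.

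Your approach, by contrast, is a direct degree count on the coarse tree followed by a leaf-pruning induction. This is more elementary in that it never appeals to the global vanishing $h^0(\omega)=g=0$, and it makes the role of the twist by $-\sum_i\mathcal{D}_i$ visible in the arithmetic. It also shows that the result really only needs the inequality $w_j<d$ rather than the divisibility $w_j\mid d$; your remark that ``the Gorenstein hypothesis $w_j\mid d$ is used'' for the strict inequality is therefore slightly misplaced, since $w_j<d$ already follows from non-degeneracy of $W$. A small imprecision: for the final surviving component one has $n_i=p$ rather than $n_i=1+p$, so the displayed formula changes to $-\tfrac{2w_j}{d}-p(1-\tfrac{w_j}{d})-\sum_\sigma\text{age}_\sigma$, which is of course still negative. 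With that caveat, your induction is sound, and your observation that per-component negativity can fail (so that one genuinely needs the gluing/pruning step) is exactly the point that the paper's root argument sidesteps.
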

\begin{proof}
First, note that
\[
H^0(\mathcal{C},\left. \rho^{\ast} \omega_{|\pi |}\right |_{\mathcal{C}})=H^0(|\mathcal{C}|,\left.  \omega_{|\pi |}\right |_{|\mathcal{C}|})=H^0(|\mathcal{C}|,\omega_{|\mathcal{C}|})=0
\]
since $|\mathcal{C}|$ is a curve of genus zero. From Equation (\ref{Wext-root}) and the Gorenstein condition, it follows that $\Wext^{\otimes w_j}$ is a root of $\rho^{\ast}\omega_{|\pi |}$, establishing the claim.
\end{proof}
The previous lemma implies that the extended obstruction bundle is given by
\[
R^1 \pi_{\ast}\left( \bigoplus_{j=1}^N \Wext^{\otimes w_j}\right),
\]
and we can now define extended FJRW invariants in terms of this orbifold vector bundle.
\begin{definition}
Let $\phi_{k_i}\in H_{\text{FJRW}}^{ext}(W)$ for $i=1,\dots, n$. Define {\it extended} FJRW {\it invariants} to be
\[
\langle \tau_{a_1}(\phi_{k_1}),\dots,\tau_{a_n}(\phi_{k_n})\rangle_{0,n}^{ext}:=\int_{[\Wmod]}\prod_{i=1}^n \widetilde{\psi}_i^{a_i}\cup\prod_{j=1}^N c_{top}\left(\left( R^1\pi_{\ast} \Wext^{\otimes w_j} \right)^{\ast}\right).
\]
\end{definition}
The following lemma allows us to recover the narrow part of FJRW theory in terms of the extended invariants. A proof can be found in \cite[Proposition 3.2.]{CIR}.
\begin{lemma}
If $k_i\in\bf{Nar}$ for all $i=1,\dots,n$, then
\[
\langle \tau_{a_1}(\phi_{k_1}),\dots,\tau_{a_n}(\phi_{k_n})\rangle_{0,n}^{ext}=\langle \tau_{a_1}(\phi_{k_1}),\dots,\tau_{a_n}(\phi_{k_n})\rangle_{0,n}^{\text{FJRW}}.
\]
Otherwise, the invariants vanish.
\end{lemma}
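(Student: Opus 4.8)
The plan is to compare directly the two obstruction bundles entering the definitions: the bundle $R^1\pi_*\Luni^{\otimes w_j}$ used in the narrow invariants and the bundle $R^1\pi_*\Wext^{\otimes w_j}$ used in the extended invariants. Since $\Wext=\Luni\otimes\mathcal{O}(-\sum_i\mathcal{D}_i)$, for each $j$ these line bundles differ only by a twist supported on the marking divisors, so I would organize the comparison through the short exact sequence on the universal curve
\[
0\longrightarrow \Wext^{\otimes w_j}\longrightarrow \Luni^{\otimes w_j}\longrightarrow Q_j\longrightarrow 0,
\]
where $Q_j$ is supported on the thickened divisors $w_j\mathcal{D}_i$. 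Because the markings are finite over $\Wmod$, we have $R^{\geq 1}\pi_*Q_j=0$, so the whole comparison reduces to the fiberwise invariant sections $\pi_*Q_j$. The first key step is thus a local calculation at each marking: restricting to $\mathcal{D}_i$ and using the filtration of $\mathcal{O}_{w_j\mathcal{D}_i}$ by powers of the conormal line, one decomposes $Q_j|_{\mathcal{D}_i}$ into $\boldsymbol{\mu}_d$-isotypic graded pieces of weight $(k_i+1)w_j-l$ for $l=0,\dots,w_j-1$. The coarse pushforward keeps only weight-zero pieces, so $\pi_*Q_j=0$ precisely when none of these residues is $\equiv 0 \pmod d$.

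For the narrow case I would use the Gorenstein hypothesis decisively. When $k_i\in\textbf{Nar}$ we have $(k_i+1)w_j\not\equiv 0 \pmod d$ for every $j$, and since $w_j\mid d$ the residue $(k_i+1)w_j \bmod d$ is a nonzero multiple of $w_j$, hence lies in $[w_j,\,d-w_j]$. The shifted residues $(k_i+1)w_j-l$ with $0\le l\le w_j-1$ then lie in $[1,d-1]$ and are nonzero modulo $d$. Hence $\pi_*Q_j=0$, so $R\pi_*Q_j=0$ and $R^1\pi_*\Wext^{\otimes w_j}\cong R^1\pi_*\Luni^{\otimes w_j}$ for every $j$; here $\pi_*$ of both bundles vanishes, by the preceding lemma for $\Wext^{\otimes w_j}$ and by Lemma~\ref{obstruction} for $\Luni^{\otimes w_j}$. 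The two obstruction bundles coincide, so their top Chern classes and the corresponding integrands agree, and the extended invariant equals the narrow FJRW invariant.

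For the converse, suppose $k_i\notin\textbf{Nar}$, so that $(k_i+1)w_{j_0}\equiv 0 \pmod d$ for some index $j_0$. Then the $l=0$ graded piece of $Q_{j_0}|_{\mathcal{D}_i}$ is $\boldsymbol{\mu}_d$-invariant and survives under $\pi_*$. Feeding this into the long exact sequence for $R^\bullet\pi_*$ and using $\pi_*\Wext^{\otimes w_{j_0}}=0$, one obtains $\mathrm{coker}\!\big(\pi_*\Luni^{\otimes w_{j_0}}\to\pi_*Q_{j_0}\big)$ as a subobject of $R^1\pi_*\Wext^{\otimes w_{j_0}}$; isolating the weight-zero contribution singles out a line subbundle $\mathbb{L}$. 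The plan is then to show $c_1(\mathbb{L})=0$: this piece is the evaluation $\sigma_i^*\Luni^{\otimes w_{j_0}}$ of the marking section $\sigma_i$, and via the isomorphism $\Luni^{\otimes d}\cong\w$ together with the canonical residue trivialization of $\w$ along the markings, its fiber is canonically trivialized. By multiplicativity of the Euler class in $0\to\mathbb{L}\to R^1\pi_*\Wext^{\otimes w_{j_0}}\to(\,\cdot\,)\to 0$ (equivalently, dualizing, $\mathbb{L}^*$ is a quotient with $c_1(\mathbb{L}^*)=0$), the factor $c_1(\mathbb{L})=0$ annihilates $c_{\mathrm{top}}\big((R^1\pi_*\Wext^{\otimes w_{j_0}})^*\big)$, so the integrand vanishes identically and the extended invariant is zero.

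The main obstacle is the broad case, and specifically this last step: promoting the fiberwise weight-zero statement to a genuine line \emph{subbundle} $\mathbb{L}$ of $R^1\pi_*\Wext^{\otimes w_{j_0}}$ over all of $\Wmod$, and proving that its first Chern class really vanishes rather than reducing to a $\psi$-twisted evaluation class. The care needed is to check that the residue trivialization of $\w$ at the broad marking descends to $\mathbb{L}$ itself and that the quotient in the displayed sequence is locally free, so that multiplicativity applies. A secondary technical point, already present in the narrow case, is the orbifold bookkeeping of the conormal weights in the filtration of $\mathcal{O}_{w_j\mathcal{D}_i}$, whose $\boldsymbol{\mu}_d$-conventions must be matched with those normalizing $\mathrm{age}_{\mathcal{D}_i}(\Wext)=k_i/d$.
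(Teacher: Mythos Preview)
The paper does not give its own proof of this lemma; it simply cites \cite[Proposition~3.2]{CIR}. So there is no in-paper argument to compare against, and your proposal should be measured against the CIR argument, which is indeed organized around the same short exact sequence comparing $\Luni^{\otimes w_j}$ and $\Wext^{\otimes w_j}$.

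Your narrow case is clean and correct. The Gorenstein hypothesis is used exactly where it should be: since $w_j\mid d$, the nonzero residue $(k_i+1)w_j\bmod d$ is a multiple of $w_j$ lying in $[w_j,d-w_j]$, so none of the $w_j$ consecutive isotypic pieces of $Q_j$ at $\mathcal{D}_i$ are $\boldsymbol{\mu}_d$-invariant, hence $\pi_*Q_j=0$ and the two obstruction bundles agree. This is precisely the mechanism in CIR.

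For the broad case your strategy is right but the gap you flag is genuine, and it is the crux of the matter. The issue is not the vanishing of $c_1(\sigma_i^*\Luni^{\otimes w_{j_0}})$ --- that follows exactly as you say, since its $d$th power is $\sigma_i^*\omega_{\text{log}}$, canonically trivial by the residue --- but rather that the connecting map $\sigma_i^*\Luni^{\otimes w_{j_0}}\to R^1\pi_*\Wext^{\otimes w_{j_0}}$ need not be fiberwise injective: whenever $H^0(\mathcal{C},\Luni^{\otimes w_{j_0}})\neq 0$ (which can happen once a marking is broad, so Lemma~\ref{obstruction} no longer applies), the image of $\pi_*\Luni^{\otimes w_{j_0}}$ in $\pi_*Q_{j_0}$ may hit your candidate line. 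You then cannot simply split off $\mathbb{L}$ as a sub- or quotient bundle of the obstruction. The repair used in CIR is to pass through relative Serre duality, identifying $(R^1\pi_*\Wext^{\otimes w_{j_0}})^\vee$ with a pushforward on which one can exhibit the trivial line directly and globally; equivalently, one shows that the torsion line $\sigma_i^*\Luni^{\otimes w_{j_0}}$ appears as a genuine quotient of the dual obstruction bundle rather than merely a subobject of a cokernel sheaf. Once you rewrite the argument from that side, the locally-free issue disappears and multiplicativity of the Euler class applies cleanly. Your secondary bookkeeping worry about conormal weights is correct but routine; the only delicate normalization is that the $\ell=0$ graded piece is the invariant one precisely when the marking is broad, so no $\psi$-twist enters.
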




\subsection{Givental's formalism and FJRW big $I$-function}
In the Givental framework (see, for example, \cite{Coates-Givental}), the information of the genus zero invariants is contained in a Lagrangian cone inside a symplectic space. As in the case of Gromov-Witten theory, FJRW theory also fits the picture provided by the formalism of Givental. In this section we review this construction. We refer the reader to \cite{CPS} for an accessible introduction to the subject.

\subsubsection{The symplectic space}

Let $H$ denote $H_{\text{CR}}^{amb}(X_{W};\C)$, $H_{\text{FJRW}}^{nar}(W)$ or $H_{\text{FJRW}}^{ext}(W)$. Choose a homogeneous basis $\{\phi_0,\dots,\phi_s\}$ for $H$ such that $\phi_0=\textbf{1}$ is the identity element of $H$. Define a pairing matrix by $g_{ij}:=(\phi_i,\phi_j)$, where $(\text{ },\text{ })$ is either the pairing of CR-cohomology or the pairing of FJRW theory. Let $g^{ij}$ be its inverse, we then have a dual basis $\{\phi^0,\dots,\phi^s\}$ defined as $\phi^i:=\sum_j g^{ij}\phi_j$, for $i=0,\dots, s$.

The Givental symplectic space is defined as $\mathcal{V}:=H\otimes\C ((z^{-1}))$ and it comes equipped with a symplectic form $\Omega$ defined as
\[
\Omega (\textbf{f},\textbf{g}):= \underset{z=0}{\text{Res}}(\textbf{f}(-z),\textbf{g}(z))
\]

where $\textbf{f}(z)$ and $\textbf{g}(z)$ are elements of $\mathcal{V}$.

The symplectic form $\Omega$ induces a Lagrangian polarization $\mathcal{V}=\mathcal{V}_{+}\oplus\mathcal{V}_{-}$ on the symplectic space, with $\mathcal{V}_{+}:=H[z]$ and $\mathcal{V}_{-}:=z^{-1}H  [[z^{-1}]]$. Note that we have an identification $\mathcal{V}\cong T^{\ast}\mathcal{V}_{+}$. Under this polarization a typical element $\textbf{q}+\textbf{p}$ of $\mathcal{V}$ can be written in Darboux coordinates as
\[
\sum_{k\geq 0}\sum_{\alpha =0}^s q_{k}^{\alpha}\phi_{\alpha}z^k + \sum_{l \geq 0}\sum_{\beta =0}^s p_{l,\beta}\phi^{\beta}(-z)^{-l-1}
\]

\subsubsection{The potentials}

The invariants in both theories can be succinctly packaged into generating functions known as potentials. The genus $g$ potential is defined as
\begin{equation}
\mathcal{F}_{\text{GW}}^g(\mathbf{t}):=\sum_{\substack{d\geq 0\\ n\geq 0}}\text{ }\sum_{\substack{a_1,\dots,a_n\\ h_1,\dots,h_n}} \langle \tau_{a_1}(\phi_{h_1}),\dots, \tau_{a_n}(\phi_{h_n})\rangle_{g,n,d}^{\text{GW}} \frac{t_{a_1}^{h_1}\dots t_{a_n}^{h_n}}{n!}Q^d
\end{equation}
in the case of Gromov-Witten theory and as
\begin{equation}\label{genusg-potential}
\mathcal{F}_{\text{FJRW}}^g(\mathbf{t}):=\sum_{ n\geq 0}\text{ }\sum_{\substack{a_1,\dots,a_n\\ h_1,\dots,h_n}} \langle \tau_{a_1}(\phi_{h_1}),\dots, \tau_{a_n}(\phi_{h_n})\rangle_{g,n}^{\text{FJRW}} \frac{t_{a_1}^{h_1}\dots t_{a_n}^{h_n}}{n!}
\end{equation}
in the case of FJRW theory.
Lastly, we also have a total descendent potential which encodes invariants of all genera and is defined as
\begin{equation}\label{descendent}
\mathcal{D}:=\text{exp} \biggl ( \sum_{g\geq 0} \hbar^{g-1}\mathcal{F}^g \biggr )
\end{equation}
for both theories.

\subsubsection{The Lagrangian cone}

After performing the \textit{Dilaton shift}
\[
q_a^{h}=
\begin{cases}
t_1^0-1 &\text{if } (a,h)=(1,0)\\
t_a^h &\text{otherwise}
\end{cases}
\]
the genus zero potential $\mathcal{F}^0$ can be considered as a power series in the Darboux coordinates $q_a^h$. The Lagrangian cone $\mathcal{L}$ is defined as
\begin{equation}\label{cone}
\mathcal{L}:=\left\{(\textbf{q},\textbf{p})\in T^{\ast}\mathcal{V}_{+}\text{ }\middle |\text{ }\textbf{p}=\text{d}_{\textbf{q}}\mathcal{F}^0\right\}\subset \mathcal{V}.
\end{equation}

From Theorem 1 in \cite{Givental} it follows that this Lagrangian submanifold is a Lagrangian cone with its vertex at the origin such that
\begin{equation}\label{lagrangian}
zT_{\textbf{f}}\mathcal{L}=\mathcal{L}\cap T_{\textbf{f}}\mathcal{L},\quad \text{for all $\textbf{f}\in\mathcal{L}$}
\end{equation}
if and only if $\mathcal{D}$ satisfies the Topological Recursion Relations (TRR), the String Equation (SE) and the Dilation Equation (DE). In Gromov-Witten theory $\mathcal{D}_{\text{GW}}$ is known to satisfy these three conditions. In FJRW theory the same is guaranteed as a consequence of Theorem 4.2.9 in \cite{FJRa}.

We define the $J$-function in Gromov-Witten theory as
\begin{equation}
J_{\text{GW}}(\textbf{t},z):=z\phi_0+\sum_{h} t_0^h\phi_h+\sum_{\substack{n\geq 0\\ k\geq 0}}\text{ }\sum_{h_1,\dots,h_n}\sum_{\epsilon , d}\frac{t_{0}^{h_1}\dots t_{0}^{h_n}}{n!z^{k+1}} \langle \phi_{h_1},\dots, \phi_{h_n},\tau_k(\phi_{\epsilon})\rangle_{0,n+1,d}^{\text{GW}} \phi^{\epsilon}Q^d
\end{equation}
and in FJRW theory as
\begin{equation}
J_{\text{FJRW}}(\textbf{t},z):=z\phi_0+\sum_{h} t_0^h\phi_h+\sum_{\substack{n\geq 0\\ k\geq 0}}\text{ }\sum_{h_1,\dots,h_n}\sum_{\epsilon}\frac{t_{0}^{h_1}\dots t_{0}^{h_n}}{n!z^{k+1}} \langle \phi_{h_1},\dots, \phi_{h_n},\tau_k(\phi_{\epsilon})\rangle_{0,n+1}^{\text{FJRW}} \phi^{\epsilon}.
\end{equation}

Note that $J(\textbf{t},-z)$ is the intersection of the Lagrangian cone with the slice $-z\phi_0+\textbf{t}+\mathcal{O}(z^{-1})\subset \mathcal{V}$. It is a well-known consequence of Equation (\ref{lagrangian}) that $J(\textbf{t},-z)$ generates the entire Lagrangian cone and therefore it contains all the information of the genus zero theory.
\subsubsection{The $e_{\C^{\ast}}$-twisted theory}
Consider the diagonal action of $\C^{\ast}$ on the extended obstruction bundle $\bigoplus_{j=1}^{N}R^1\pi_{\ast}\Wext ^{\otimes w_j}$, which scales the fibers and acts trivially on the base. Let $H_{\C^\ast}^{\ast}(\text{pt})=\C[\lambda]$ and define $R:=\C[\lambda][[s_0,s_1,\dots]]$. The state space of the twisted FJRW theory is defined to be
\[
\mathcal{V}^{tw,\bf{s}}:=H_{\text{FJRW}}^{ext}(W)\otimes R\otimes\C((z^{-1})).
\]
We define a non-degenerate pairing on $\mathcal{V}^{tw,\bf{s}}$ by
\begin{equation}\label{twisted-pairing}
(\phi_i,\phi_k)^{tw,\bf{s}}:=
\begin{cases}
\prod_{j: (i+1)w_j\in d\mathbb{Z}}\exp (-s_0) &\text{if } i+k+2\equiv 0 \text{ (mod d)}\\
0 &\text{otherwise}.
\end{cases}
\end{equation}
Then, $\mathcal{V}^{tw,\bf{s}}$  becomes a symplectic space by defining the form
\[
\Omega (\textbf{f},\textbf{g})^{tw,\bf{s}}:= \underset{z=0}{\text{Res}}(\textbf{f}(-z),\textbf{g}(z))^{tw,\bf{s}}
\]
where $\textbf{f}(z)$ and $\textbf{g}(z)$ are elements of $\mathcal{V}^{tw,\bf{s}}$.

Given a K-class $[V]\in K^0(\Wmod)$, we define its {\it universal characteristic class} as
\begin{equation}\label{characteristic-class}
c_{\bf{s}}\left( [V] \right):=\exp \left( \sum_{l \geq 0} s_l \text{ch}_l ([V]) \right).
\end{equation}
We define \textit{twisted FJRW invariants} by
\[
\langle \tau_{a_1}(\phi_{k_1}),\dots,\tau_{a_n}(\phi_{k_n})\rangle_{0,n}^{tw,\bf{s}}:=\int_{[\Wmod]}\prod_{i=1}^n \widetilde{\psi}_i^{a_i}\cup c_{\bf{s}}\left(-\bigoplus_{j=1}^NR^1\pi_{\ast} \Wext^{\otimes w_j} \right),
\]
i.e. we are twisting the theory by the extended obstruction bundle. We can also define potentials $\mathcal{F}_{\text{FJRW}}^{g,tw,\bf{s}}(\bf{t})$ and $\mathcal{D}^{tw,\bf{s}}$, as in Equations (\ref{genusg-potential}) and (\ref{descendent}) respectively. One can check that the twisted invariants satisfy TRR, SE, and DE (see for example \cite{Chiodo-Ruan2}). Therefore, the {\it twisted Lagrangian cone} 
\[
\mathcal{L}^{tw,\bf{s}}:=\{ (\bf{q},\bf{p}) \mid \bf{p}=\text{d}_{\bf{q}} \mathcal{F}_{\text{FJRW}}^{0,tw,\bf{s}} \}
\]
satisfies
\[
zT_{\textbf{f}}\mathcal{L}^{tw,\bf{s}}=\mathcal{L}^{tw,\bf{s}}\cap T_{\textbf{f}}\mathcal{L}^{tw,\bf{s}},\quad \text{for all $\textbf{f}\in\mathcal{L}^{tw,\bf{s}}$}.
\]
A key property of the twisted Lagrangian cone is the fact that under a certain specialization of the parameters $s_0,s_1,\dots$, we can recover the standard FJRW Lagrangian cone of Equation (\ref{cone}), after taking the non-equivariant limit $\lambda\rightarrow 0$. To verify this claim, define
\begin{equation}\label{parameters}
s_l:=
\begin{cases}
-\log (\lambda) &\text{if } l=0\\
\frac{(l-1)!}{\lambda^l} &\text{if $l>0$}.
\end{cases}
\end{equation}
We then have the following lemma:
\begin{lemma}
Let $s_0,s_1,\dots $ be defined as in Equation $(\ref{parameters})$. Then,
\[
c_{\bf{s}}\left(-\bigoplus_{j=1}^NR^1\pi_{\ast} \Wext^{\otimes w_j} \right)=\prod_{j=1}^N e_{\C^{\ast}}\left(\left( R^1\pi_{\ast} \Wext^{\otimes w_j} \right)^{\ast}\right),
\]
where $e_{\C^{\ast}}$ is the equivariant Euler class.
\end{lemma}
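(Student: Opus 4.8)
The plan is a direct Chern-character computation that reduces the statement to a single bundle. Write $V_j := R^1\pi_{\ast}\Wext^{\otimes w_j}$, so that the left-hand side is $c_{\bf{s}}\!\left(-\bigoplus_{j=1}^N V_j\right)$. Since the Chern character is additive on $K$-theory, we have $\text{ch}_l\!\left(-\bigoplus_j V_j\right) = -\sum_j \text{ch}_l(V_j)$, and since the universal characteristic class $c_{\bf{s}}$ is defined in $(\ref{characteristic-class})$ as the exponential of a linear combination of Chern characters, the left-hand side factors as $\prod_j \exp\!\left(-\sum_{l\geq 0} s_l\,\text{ch}_l(V_j)\right)$. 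The right-hand side is already a product over $j$, so it suffices to prove the identity $c_{\bf{s}}(-V) = e_{\C^{\ast}}(V^{\ast})$ for a single bundle $V = V_j$ under the specialization of Equation $(\ref{parameters})$.

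Next I would compute the equivariant Euler class in terms of Chern roots. Let $x_1,\dots,x_r$ be the Chern roots of $V$, where $r=\text{rank}(V)=\text{ch}_0(V)$. The diagonal $\C^{\ast}$-action scaling the fibers, with equivariant parameter $\lambda$, produces $e_{\C^{\ast}}(V^{\ast}) = \prod_{i=1}^r(\lambda - x_i)$, since dualizing replaces each Chern root $x_i$ by $-x_i$. Taking logarithms and expanding $\log(1-x_i/\lambda)$ as a power series gives
\[
\log e_{\C^{\ast}}(V^{\ast}) = \sum_{i=1}^r \log(\lambda - x_i) = r\log\lambda - \sum_{l\geq 1}\frac{1}{l\lambda^l}\sum_{i=1}^r x_i^l.
\]
Applying the Newton power-sum identity $\sum_i x_i^l = l!\,\text{ch}_l(V)$ for $l\geq 1$ converts this into $\log e_{\C^{\ast}}(V^{\ast}) = \text{ch}_0(V)\log\lambda - \sum_{l\geq 1}\frac{(l-1)!}{\lambda^l}\text{ch}_l(V)$. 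On the other side, $c_{\bf{s}}(-V) = \exp\!\left(-\sum_{l\geq 0} s_l\,\text{ch}_l(V)\right)$; substituting $s_0 = -\log\lambda$ and $s_l = (l-1)!/\lambda^l$ for $l>0$ yields exactly $\exp\!\left(\text{ch}_0(V)\log\lambda - \sum_{l\geq 1}\frac{(l-1)!}{\lambda^l}\text{ch}_l(V)\right)$, matching the expression just obtained for $e_{\C^{\ast}}(V^{\ast})$. Reassembling the product over $j$ then gives the lemma. Conceptually, the specialization $(\ref{parameters})$ is engineered precisely so that $c_{\bf{s}}(V) = e_{\C^{\ast}}(V^{\ast})^{-1}$ for a genuine bundle $V$, and the minus sign in the $K$-class inverts this back to the Euler class itself.

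The computation is formal once the conventions are pinned down; the only genuine care needed is in tracking two separate sign sources that conspire here — the minus in the $K$-class $-V$, which produces the overall sign on every $\text{ch}_l$, and the dualization $V \mapsto V^{\ast}$, which flips the Chern roots and thereby turns $\prod_i(\lambda + x_i)$ into $\prod_i(\lambda - x_i)$ — together with the $l=0$ contribution, where the rank of $V$ supplies the leading factor $\lambda^r$ of the Euler class and is matched against the $s_0 = -\log\lambda$ specialization. I expect this bookkeeping, rather than any substantive difficulty, to be the main obstacle, and it is worth noting that the identity is understood to hold after passing to the appropriate completed or localized coefficient ring where $\log\lambda$ and the $\lambda^{-l}$ are meaningful.
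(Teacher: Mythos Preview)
Your proof is correct and is essentially the same argument as the paper's, just packaged slightly differently: the paper first rewrites $\text{ch}_l(-V)=(-1)^{l-1}\text{ch}_l(V^{\ast})$ and then invokes the stated identity $e_{\C^{\ast}}(U)=\exp\bigl(\log(\lambda)\,\text{ch}_0(U)+\sum_{l>0}(-1)^{l-1}\tfrac{(l-1)!}{\lambda^l}\text{ch}_l(U)\bigr)$ with $U=V^{\ast}$, whereas you derive that identity directly from the Chern-root factorization $e_{\C^{\ast}}(V^{\ast})=\prod_i(\lambda-x_i)$ and the Newton power sums. Both routes amount to the same log-expansion of $\prod_i(\lambda-x_i)$ and the same bookkeeping of the $l=0$ term against $s_0=-\log\lambda$.
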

\begin{proof}
To prove the lemma, we make use of the following identity. Given a K-class $[V]$, its equivariant Euler class can be written in terms of the non-equivariant Chern character as follows:
\begin{equation}\label{eulerclass}
e_{\C^{\ast}}([V])=\exp\left(\log (\lambda)\text{ch}_0([V]) +\sum_{l>0} (-1)^{l-1}\frac{(l-1)!}{\lambda^l}\text{ch}_l([V])\right).
\end{equation}
Thus, if we specialize the parameters $s_0,s_1$ to Equation (\ref{parameters}), we obtain
\[
\begin{split}
c_{\bf{s}}\left(-R^1\pi_{\ast} \Wext^{\otimes w_j} \right)&=\exp \left( \sum_{l \geq 0} s_l \text{ch}_l \left(-R^1\pi_{\ast} \Wext^{\otimes w_j}\right) \right)\\
&=\exp \left( \sum_{l \geq 0} (-1)^{l-1}s_l \text{ch}_l\left( \left(R^1\pi_{\ast} \Wext^{\otimes w_j}\right)^{\ast}\right) \right)\\
&=\exp \left( \log (\lambda)\text{ch}_0\left( \left(R^1\pi_{\ast} \Wext^{\otimes w_j}\right)^{\ast}\right) \right.\\
 &\quad\quad\quad+\left. \sum_{l >0} (-1)^{l-1}\frac{(l-1)!}{\lambda^l} \text{ch}_l\left( \left(R^1\pi_{\ast} \Wext^{\otimes w_j}\right)^{\ast}\right) \right)\\
 &=e_{\C^{\ast}}\left(\left( R^1\pi_{\ast} \Wext^{\otimes w_j} \right)^{\ast}\right)\quad\quad \text{by Equation (\ref{eulerclass})}.
\end{split}
\]
The desired result follows from this.
\end{proof}    
\subsubsection{The untwisted theory} By specializing the parameters $s_0,s_1,\dots$ of the twisted theory to $s_l=0$ for $l\geq 0$, Equation (\ref{characteristic-class}) becomes $c_{\bf{s}}([V])=1$. The invariants obtained by this specialization are known as {\it untwisted invariants} and can be written as
\begin{equation}\label{untwisted-invariants}
\langle \tau_{a_1}(\phi_{k_1}),\dots,\tau_{a_n}(\phi_{k_n})\rangle_{0,n}^{un}:=d\int_{[\Wmod]}\prod_{i=1}^n \widetilde{\psi}_i^{a_i}.
\end{equation}
The untwisted invariants can be easily computed via push-forward along the forgetting morphism $ \text{st}:\Wmod \longrightarrow\overline{\mathcal{M}}_{0,n}$. We thus obtain
\begin{equation}\label{hodge-integrals}
\begin{split}
\langle \tau_{a_1}(\phi_{k_1}),\dots,\tau_{a_n}(\phi_{k_n})\rangle_{0,n}^{un}&=\int_{\overline{\mathcal{M}}_{0,n}}\prod_{i=1}^n \psi_i^{a_i}\\
&=\frac{\sum_{i=1}^n a_i}{a_1!\dots a_n!}
\end{split}
\end{equation}
as soon as $\sum_{i=1}^n a_i=n-3$ and $2+\sum_{i=1}^n k_i\in d\mathbb{Z}$, and zero otherwise. Note that the factor of $d$ in Equation (\ref{untwisted-invariants}) was canceled because of the relation $\text{st}_{\ast}[\Wmod]=\frac{1}{d}[{\overline{\mathcal{M}}_{0,n}}]$.

We can now define an untwisted symplectic space $(\mathcal{V}^{un},\Omega^{un})$, as well as potentials $\mathcal{F}_{\text{FJRW}}^{g,un}(\bf{t})$, $\mathcal{D}^{un}$, and a Lagrangian cone $\mathcal{L}^{un}$. Since the untwisted invariants are equivalent to Hodge-type integrals on ${\overline{\mathcal{M}}_{0,n}}$, it follows that $\mathcal{D}^{un}$ satisfies (TRR), (SE), and (DE). Therefore, the untwisted Lagrangian cone satisfies 
\[
zT_{\textbf{f}}\mathcal{L}^{un}=\mathcal{L}^{un}\cap T_{\textbf{f}}\mathcal{L}^{un},\quad \text{for all $\textbf{f}\in\mathcal{L}^{un}$}.
\]
\subsubsection{Computing the FJRW big I-function}
We now compute the so-called big $I$-function for FJRW theory. This function is a parametric family that lies on the Lagrangian cone $\mathcal{L}_{\text{FJRW}}$ and from which the FJRW $J$-function can be completely determined by means of a mirror theorem. In order to determine the $I$-function, we must proceed in several steps, which we now outline:
\begin{enumerate}
\item We find the untwisted $J$-function $J^{un}(\textbf{t},-z)$ that lies on the cone $\mathcal{L}^{un}$.\\
\item Apply the transformation $\exp \biggl (-\sum_{j=1}^N G_0(zq_j\nabla +zq_j,z)\biggr )$ (defined below) to the untwisted J-function $J^{un}(\textbf{t},-z)$, to obtain a new family on $\mathcal{L}^{un}$.\\
\item Define a symplectic transformation $\bigtriangleup : (\mathcal{V}^{un},\Omega^{un})\rightarrow (\mathcal{V}^{tw,\textbf{s}},\Omega^{\textbf{s}})$  satisfying ${\mathcal{L}}^{tw,\textbf{s}}=\bigtriangleup ({\mathcal{L}}^{un})$.\\
\item Apply the transformation $\bigtriangleup$ to the family $\textbf{t}\mapsto \exp \biggl (-\sum_{j=1}^N G_0(zq_j\nabla +zq_j,z)\biggr )J^{un}(\textbf{t},-z)$, to obtain a family lying on 
${\mathcal{L}}^{tw ,\textbf{s}}$.\\
\item Set the parameters $s_0,s_1,\dots$ equal to Equation (\ref{parameters}) and take the non-equivariant limit $\lambda\rightarrow 0$, to obtain a family $I_{\text{FJRW}}(\textbf{t},-z)$ lying on $\mathcal{L}_{\text{FJRW}}$.
\end{enumerate}

Before moving forward with the construction outlined above, we need to set some notation. For a sequence of parameters $s_0,s_1,s_2,\dots$ we define 
\[
\textbf{s}(x):=\sum_{k\geq 0} s_k\frac{x^k}{k!}.
\]
Also, recall that the Bernoulli polynomials $B_{n}(x)$ are defined by the relation
\[
\sum_{n=0}^{\infty}B_{n}(x)\frac{z^n}{n!}=\frac{z e^{zx}}{e^z-1}.
\]
We define a function $G_y(x,z)$ in terms of the Bernoulli polynomials as
\[
G_y(x,z):=\sum_{l,m\geq 0} s_{l+m-1} \frac{B_m (y)}{m!}\frac{x^l}{l!}z^{m-1},
\]
where $s_{-1}:=0$.
This function satisfies the following relations:
\begin{eqnarray}\label{G-function}
G_y(x,z)&=G_0(x+yz,z),\\
G_0(x+z,z)&=G_0(x,z)+\textbf{s}(x).
\end{eqnarray}

For a vector of non-negative integers $\mathbf{k}=(k_0,\dots,k_{d-1})$, define
\begin{eqnarray}
|\textbf{k}|:=&\sum_{i=0}^{d-1}k_i,\\
h(\textbf{k}):=& \sum_{i=0}^{d-1}ik_i,\\
\widetilde{h}(\textbf{k}):=&d \left\langle \frac{\sum_{i=0}^{d-1}ik_i}{d} \right\rangle,
\end{eqnarray}
i.e., $\widetilde{h}(\textbf{k})\equiv \sum_{i=0}^{d-1}ik_i\text{ }(\text{mod d})$ and $\widetilde{h}(\textbf{k})\in \{0,\dots, d-1\}$.

The following lemma provides step (1) in the construction of the FJRW I-function.
\begin{lemma}
The untwisted $J$-function is given by
\[
J^{\text{un}}(\textbf{t},z)=\sum_{\textbf{k}=(k_0,\dots, k_{d-1})\in\mathbb{Z}_{\geq 0}^d} J_{\textbf{k}}^{\text{un}}(\textbf{t},z),
\]
where
\[
J_{\textbf{k}}^{\text{un}}(\textbf{t},z):=\frac{1}{z^{|\textbf{k}|-1}}\frac{(t_0^0)^{k_0}\dots (t_0^{d-1})^{k_{d-1}}}{k_0!\dots k_{d-1}! }\phi_{\widetilde{h}(\textbf{k})}.
\]
\end{lemma}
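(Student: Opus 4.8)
The plan is to evaluate $J^{\text{un}}(\textbf{t},z)$ directly from its defining series, substituting the closed form of the untwisted invariants from Equation (\ref{hodge-integrals}), and then to repackage the resulting sum over ordered index-tuples as a sum over multi-indices via the multinomial theorem. First I would record that in the $(n+1)$-pointed invariant $\langle\phi_{h_1},\dots,\phi_{h_n},\tau_k(\phi_\epsilon)\rangle_{0,n+1}^{\text{un}}$ appearing in the $J$-function, the first $n$ insertions carry no $\widetilde\psi$-class while the last carries $\widetilde\psi^{\,k}$. The dimension axiom $\sum a_i=(n+1)-3=n-2$ therefore forces $k=n-2$ (so in particular $n\ge 2$), and for this value $\int_{\overline{\mathcal{M}}_{0,n+1}}\widetilde{\psi}_{n+1}^{\,n-2}=1$. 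Hence each surviving invariant equals $1$ exactly, subject only to the selection rule $2+h_1+\dots+h_n+\epsilon\equiv 0\pmod d$, and vanishes otherwise. This collapses the double sum over $(n,k)$ to a single sum over $n\ge 2$ with $k=n-2$, turning the $z$-power $z^{-(k+1)}$ into $z^{1-n}$.

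Next I would identify the dual class $\phi^\epsilon$. The pairing (\ref{grading}) is the reversal involution sending $\phi_i$ to the unique $\phi_{i'}$ with $i+i'+2\equiv 0\pmod d$, so its Gram matrix is a permutation matrix equal to its own inverse and $\phi^\epsilon=\phi_{\epsilon'}$ with $\epsilon'\equiv -2-\epsilon\pmod d$. For fixed $h_1,\dots,h_n$ the selection rule singles out the unique surviving $\epsilon\equiv -2-(h_1+\dots+h_n)\pmod d$, whence $\epsilon'\equiv h_1+\dots+h_n\pmod d$. The descendant part of $J^{\text{un}}$ thus reduces to
\[
\sum_{n\ge 2}\frac{1}{n!\,z^{n-1}}\sum_{h_1,\dots,h_n}t_0^{h_1}\cdots t_0^{h_n}\,\phi_{d\langle (h_1+\dots+h_n)/d\rangle}.
\]

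Finally I would pass from ordered tuples to multi-indices. Grouping the tuples $(h_1,\dots,h_n)$ by their multiplicity vector $\textbf{k}=(k_0,\dots,k_{d-1})$ with $|\textbf{k}|=n$, the multinomial theorem gives $\sum_{h_1,\dots,h_n}t_0^{h_1}\cdots t_0^{h_n}=\sum_{|\textbf{k}|=n}\frac{n!}{k_0!\cdots k_{d-1}!}(t_0^0)^{k_0}\cdots(t_0^{d-1})^{k_{d-1}}$, while $h_1+\dots+h_n=\sum_i i\,k_i$, so that $d\langle(h_1+\dots+h_n)/d\rangle=\widetilde{h}(\textbf{k})$. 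The factors of $n!$ cancel and the descendant part becomes $\sum_{|\textbf{k}|\ge 2}J_{\textbf{k}}^{\text{un}}(\textbf{t},z)$. It then remains to note that the leading terms $z\phi_0$ and $\sum_h t_0^h\phi_h$ in the definition of $J^{\text{un}}$ are precisely the multi-indices with $|\textbf{k}|=0$ (giving $\widetilde{h}=0$, hence $z\phi_0$) and $|\textbf{k}|=1$ (where $\textbf{k}$ is a standard basis vector and $\widetilde{h}=i$, hence $t_0^i\phi_i$), which completes the identification with $\sum_{\textbf{k}}J_{\textbf{k}}^{\text{un}}(\textbf{t},z)$.

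The argument is essentially bookkeeping, with no deep obstruction. The only genuinely delicate step is the dual-basis computation, where the selection rule and the reversal pairing must be tracked simultaneously to see that the surviving sector index is exactly $\widetilde{h}(\textbf{k})$ rather than some residue shift of it. A secondary point to verify with care is that the degree constraint pins the descendant order to $k=n-2$ and forces the invariant to equal $1$ on the nose, so that no stray rational coefficients survive to spoil the clean factorials appearing in $J_{\textbf{k}}^{\text{un}}$.
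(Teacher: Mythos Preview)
Your proposal is correct and follows essentially the same route as the paper: both arguments use the dimension constraint to force the descendant index to $n-2$ so the Hodge integral equals $1$, invoke the selection rule together with the pairing to identify $\phi^\epsilon=\phi_{\widetilde h(\mathbf{k})}$, regroup ordered tuples into multi-indices via the multinomial count, and then absorb the $|\mathbf{k}|\le 1$ terms. Your treatment of the dual-basis step is slightly more explicit, but the underlying logic is identical.
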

\begin{proof}
The untwisted J-function was defined as
\[
\begin{split}
J^{un}(\textbf{t},-z)=&-z\phi_0 +\sum_{i=0}^{d-1}t_0^i\phi_i\\
&+\sum_{\substack{n\geq 0\\ l\geq 0}}\text{ }\sum_{0\leq h_{1},\dots ,h_{n}\leq d-1} \sum_{\epsilon=0}^{d-1}\frac{t_0^{h_1}\dots t_0^{h_n}}{n!(-z)^{l+1}}\langle \phi_{h_1},\dots, \phi_{h_n},\tau_{l}(\phi_{\epsilon})\rangle_{0,n+1}^{un}\phi^{\epsilon}.
\end{split}
\]
For dimensional reasons, an invariant appearing in the formula for the J-function will vanish unless $n+1-3=l$. We also need $2+\epsilon+\sum_{i=0}^n h_i\in d\mathbb{Z}$, or the invariants will vanish as a consequence of Lemma \ref{moduli-lemma}. Rewrite the invariant $\langle \phi_{h_1},\dots, \phi_{h_n},\tau_{l}(\phi_{\epsilon})\rangle_{0,n+1}^{un}$ as
\[
\langle \underbrace{\phi_0,\dots, \phi_0}_\text{$k_0$},\underbrace{\phi_1,\dots ,\phi_1}_\text{$k_1$},\dots, \underbrace{\phi_{d-1},\dots,\phi_{d-1}}_\text{$k_{d-1}$},\tau_{l}(\phi_{\epsilon})\rangle_{0,n+1}^{un},
\]
where $\phi_i$ appears $k_i$ times, $i=0,\dots,d-1$. This requires that $n=\sum_{i=0}^{d-1}k_i=|\textbf{k}|$ and that $\sum_{i=1}^nh_i=\sum_{i=0}^{d-1}ik_i$. Therefore, we need $2+\epsilon +\widetilde{h}(\textbf{k})\in d\mathbb{Z}$.

If these conditions are met, it follows from Equation (\ref{hodge-integrals}) that the above invariant is equal to $1$. Note that each invariant of this form appears $(k_0+\dots +k_{d-1})!/(k_0!\dots k_{d-1}!)$ times in the J-function. Thus,
\[
\begin{split}
J^{un}(\textbf{t},-z)=&-z\phi_0 +\sum_{i=0}^{d-1}t_0^i\phi_i\\
&+\sum_{\substack{\textbf{k}=(k_0,\dots , k_{d-1})\\ :|\textbf{k}|\geq 2}}\frac{(t_0^{0})^{k_0}\dots (t_0^{d-1})^{k_{d-1}}}{|\textbf{k}|!(-z)^{|\textbf{k}|-1}}\frac{(k_0+\dots +k_{d-1})!}{k_0!\dots k_{d-1}!} \phi_{\widetilde{h}(\textbf{k})}\\
=&\sum_{\textbf{k}=(k_0,\dots , k_{d-1})\in\mathbb{Z}_{\geq 0}^{d}}\frac{(t_0^{0})^{k_0}\dots (t_0^{d-1})^{k_{d-1}}}{(-z)^{|\textbf{k}|-1}k_0!\dots k_{d-1}!} \phi_{\widetilde{h}(\textbf{k})},
\end{split}
\]
where we have used $\phi^{\epsilon}=\phi_{\widetilde{h}(\textbf{k})}$ (this follows from Equation (\ref{twisted-pairing}) and the condition $2+\epsilon +\widetilde{h}(\textbf{k})\in d\mathbb{Z}$).
\end{proof}
The second step in the construction of the FJRW I-function consists of using the untwisted J-function to construct a new family on the cone $\mathcal{L}^{un}$. The following lemma provides the realization of this step.
\begin{lemma}\label{family}
The family
\begin{equation}\label{family-definition}
\textbf{t}\mapsto \exp \biggl (-\sum_{j=1}^N G_0(zq_j\nabla +zq_j,z)\biggr )J^{\text{un}}(\textbf{t},-z)
\end{equation}
lies on the untwisted Lagrangian cone $\mathcal{L}^{\text{un}}$.
\end{lemma}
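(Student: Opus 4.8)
The plan is to show that the operator $\mathcal{G}:=\exp\bigl(-\sum_{j=1}^N G_0(zq_j\nabla+zq_j,z)\bigr)$ moves the family $J^{\text{un}}(\textbf{t},-z)$ \emph{along} the overruled cone $\mathcal{L}^{\text{un}}$ rather than off it. The starting point is the previous lemma, which gives $J^{\text{un}}(\textbf{t},-z)\in\mathcal{L}^{\text{un}}$ explicitly, together with the overruled property $zT_{\textbf{f}}\mathcal{L}^{\text{un}}=\mathcal{L}^{\text{un}}\cap T_{\textbf{f}}\mathcal{L}^{\text{un}}$ from Equation (\ref{lagrangian}). Here $\nabla$ is the grading operator, acting on the summand $J_{\textbf{k}}^{\text{un}}$ by the degree $h(\textbf{k})=\sum_i ik_i$; consequently $G_0(zq_j\nabla+zq_j,z)$ acts on each homogeneous piece $J_{\textbf{k}}^{\text{un}}$ as multiplication by the scalar $G_0(zq_j(h(\textbf{k})+1),z)$, so that $\mathcal{G}$ is diagonal with respect to the decomposition $J^{\text{un}}=\sum_{\textbf{k}}J_{\textbf{k}}^{\text{un}}$. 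The first step is therefore to reduce the statement to a relation between neighbouring graded pieces.

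The key technical input is the functional equation $G_0(x+z,z)=G_0(x,z)+\textbf{s}(x)$ from Equation (\ref{G-function}), which produces a telescoping identity for the scalars $\exp\bigl(-G_0(zq_j(h(\textbf{k})+1),z)\bigr)$. The second step is to match the $z$-shift appearing in this recursion with the shift in the power of $z$ between the summands $J_{\textbf{k}}^{\text{un}}$ and $J_{\textbf{k}'}^{\text{un}}$, where $\textbf{k}'$ differs from $\textbf{k}$ by one additional insertion. Because the untwisted invariants reduce to the $\psi$-class integrals of Equation (\ref{hodge-integrals}), the untwisted theory is a direct sum of trivial (point) theories sorted by $\widetilde h(\textbf{k})$ modulo $d$, and its cone is governed entirely by the String, Dilaton and Topological Recursion Relations. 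I would use exactly these relations to interpret the factor $\textbf{s}(x)$ produced by the recursion as the infinitesimal generator of a motion tangent to $\mathcal{L}^{\text{un}}$: each application of a $z\partial_{t_0^i}$ sends a tangent vector of the cone into $zT_{\textbf{f}}\mathcal{L}^{\text{un}}\subset\mathcal{L}^{\text{un}}$, so that an exponential built from such generators preserves the cone.

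Concretely, I would exhibit $\mathcal{G}$ as an element of the upper-triangular loop group acting compatibly with the dilaton shift, and verify that $\mathcal{G}\,J^{\text{un}}$ again satisfies the defining condition $\textbf{p}=\mathrm{d}_{\textbf{q}}\mathcal{F}^{0,\text{un}}$ of Equation (\ref{cone}) after the induced reparametrization of $\textbf{t}$. Equivalently, one may appeal to the genus-zero quantum Riemann--Roch formalism of Coates--Givental and Tseng, in which operators of precisely this Bernoulli-polynomial shape are shown to act on overruled cones; the content of the lemma is then that the \emph{grading part} of that action, encoded by $\nabla$, leaves the untwisted cone invariant, the remaining genuinely twisting part being supplied by the symplectic transformation $\triangle$ in the next step.

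The hard part will be the second step: rigorously converting the algebraic recursion for $G_0$ into a statement about the cone. The difficulty is that $\mathcal{G}$ multiplies each graded piece by a \emph{distinct} $z$-dependent scalar, and such component-wise rescalings do not preserve $\mathcal{L}^{\text{un}}$ in general; only the special combination dictated by $G_0(x+z,z)=G_0(x,z)+\textbf{s}(x)$ does. Making this precise requires careful bookkeeping of how the String and Dilaton equations shift both $|\textbf{k}|$ and the power of $z$, and checking that these shifts cancel against the $z$-shift in the recursion. Once this compatibility is established, the overruled property of Equation (\ref{lagrangian}) upgrades the infinitesimal statement to the exponentiated one, completing the proof.
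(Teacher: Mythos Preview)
The paper's own proof of this lemma is a one-line citation: ``See \cite[Lemma 4.1.10]{Chiodo-Ruan2}.'' So there is no in-paper argument to compare against; the author defers entirely to Chiodo--Ruan. Your proposal goes further and sketches what that cited argument actually contains, and the ingredients you identify---the diagonal action of $\nabla$ on the graded pieces $J_{\textbf{k}}^{\text{un}}$ by the eigenvalue $h(\textbf{k})$, the telescoping recursion $G_0(x+z,z)=G_0(x,z)+\textbf{s}(x)$, and the overruled property $zT_{\textbf{f}}\mathcal{L}^{\text{un}}=\mathcal{L}^{\text{un}}\cap T_{\textbf{f}}\mathcal{L}^{\text{un}}$---are indeed the correct ones, and match the mechanism in \cite{Chiodo-Ruan2} (which in turn is the Coates--Givental/Tseng argument specialized to this setting).

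What you have written is a plan rather than a proof, as you yourself acknowledge in the final paragraph. The step you flag as hard---converting the recursion for $G_0$ into preservation of the cone---is exactly the substantive content, and your description of it is accurate but not carried out. The standard way to complete it is to differentiate the one-parameter family $\exp(-\epsilon\sum_j G_0(\cdots))J^{\text{un}}$ in $\epsilon$, observe that the derivative lies in $zT_{\textbf{f}}\mathcal{L}^{\text{un}}$ at each point (this is where the functional equation and the String Equation combine), and integrate using the overruled property. If you want your write-up to stand on its own rather than mirror the paper's citation, that flow argument is what you would need to supply; otherwise, citing \cite{Chiodo-Ruan2} directly, as the paper does, is the efficient route.
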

\begin{proof}
See \cite[Lemma 4.1.10]{Chiodo-Ruan2}.
\end{proof}

Step (3) in the construction of the FJRW I-function consists of defining a symplectic transformation that maps the untwisted Lagrangian cone onto the twisted cone. The following theorem defines this transformation.

\begin{theo}\label{transformation}
Define the linear symplectic transformation $\bigtriangleup : (\mathcal{V}^{\text{un}},\Omega^{\text{un}})\rightarrow (\mathcal{V}^{\text{tw},\textbf{s}},\Omega^{\textbf{s}})$ by
\begin{equation}
\bigtriangleup :=\bigoplus_{i=0}^d\exp\left (  \sum_{j=1}^N\sum_{l\geq 0} \frac{s_l B_{l+1}\left(\left\langle \frac{iw_j}{d}\right \rangle+\frac{w_j}{d}\right )z^l}{(l+1)!}\right).
\end{equation}
Then, ${\mathcal{L}}^{\text{tw,\textbf{s}}}=\bigtriangleup ({\mathcal{L}}^{\text{un}})$.
\end{theo}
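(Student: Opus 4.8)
The plan is to realize $\bigtriangleup$ as the symplectic operator furnished by the Coates--Givental quantum Riemann--Roch theorem (in the orbifold formulation of Tseng), specialized to the twist by the extended obstruction bundle. Since the twisted invariants are obtained from the untwisted ones purely by inserting the multiplicative class $c_{\mathbf{s}}\bigl(-\bigoplus_{j=1}^N R^1\pi_{\ast}\Wext^{\otimes w_j}\bigr)=\exp\bigl(\sum_{l\geq 0}s_l\,\mathrm{ch}_l(-\bigoplus_j R^1\pi_{\ast}\Wext^{\otimes w_j})\bigr)$, the general theorem guarantees that the associated Lagrangian cone is moved by a symplectic transformation determined entirely by the Chern character of the twisting bundle. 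Thus the whole statement reduces to computing that Chern character and reading off the operator it produces.

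The first and main computational step is to evaluate $\mathrm{ch}_l\bigl(R^\bullet\pi_{\ast}\Wext^{\otimes w_j}\bigr)$ by Grothendieck--Riemann--Roch on the universal orbicurve, in the explicit form worked out by Chiodo \cite{Chiodo}. The preceding lemma gives $R^0\pi_{\ast}\Wext^{\otimes w_j}=0$, so $\mathrm{ch}_l(-\bigoplus_j R^1\pi_{\ast}\Wext^{\otimes w_j})=\sum_j\mathrm{ch}_l(R^\bullet\pi_{\ast}\Wext^{\otimes w_j})$, and Chiodo's formula splits each summand into three kinds of terms: a $\kappa$-class contribution from the bulk, contributions supported on the marking divisors $\mathcal{D}_i$ of the shape $\frac{B_{l+1}(a_{i,j})}{(l+1)!}\psi_i^{\,l}$, and contributions supported on the nodes. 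Here $a_{i,j}$ is the age of $\Wext^{\otimes w_j}$ entering the marking term; using $\mathrm{age}_{\mathcal{D}_i}(\Wext)=i/d$ together with the root isomorphism $(\ref{Wext-root})$, this age equals $\langle iw_j/d\rangle+w_j/d$ on the sector $\phi_i$, which is precisely the argument of $B_{l+1}$ appearing in the statement.

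Feeding this Chern character into the Coates--Givental prescription produces the operator: under the dictionary of that theorem the marking contributions, with $\psi_i$ replaced by $z$, assemble exactly into the diagonal factor $\exp\bigl(\sum_{j,l}\frac{s_l B_{l+1}(\langle iw_j/d\rangle+w_j/d)z^l}{(l+1)!}\bigr)$ acting on $\phi_i$, i.e. into $\bigtriangleup$. The $\kappa$-class and nodal contributions do not enter the operator separately; they are exactly the terms regenerated by the action of $\bigtriangleup$ through the recursion that defines the cone, which is the mechanism by which quantum Riemann--Roch promotes a marked-point statement to the full identity $\mathcal{L}^{\mathrm{tw},\mathbf{s}}=\bigtriangleup(\mathcal{L}^{\mathrm{un}})$ by means of the string and dilaton equations.

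The step I expect to require the most care is verifying that $\bigtriangleup$ is genuinely a \emph{symplectic} isomorphism $(\mathcal{V}^{\mathrm{un}},\Omega^{\mathrm{un}})\to(\mathcal{V}^{\mathrm{tw},\mathbf{s}},\Omega^{\mathbf{s}})$, since the two forms truly differ: the twisted pairing $(\ref{twisted-pairing})$ carries the factor $\exp(-s_0)$ on the broad directions $j$ with $(i+1)w_j\in d\mathbb{Z}$. Because $\bigtriangleup$ acts by a scalar $c_i(z)$ on each $\phi_i$, symplecticity amounts to the identity $c_i(-z)\,c_k(z)\,(\phi_i,\phi_k)^{\mathrm{tw},\mathbf{s}}=(\phi_i,\phi_k)^{\mathrm{un}}$ for the paired sectors $i+k+2\equiv 0\ (\mathrm{mod}\ d)$. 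This is a finite check: combining the reflection identity $B_{l+1}(1-x)=(-1)^{l+1}B_{l+1}(x)$ with the translation identity $B_{l+1}(x+1)-B_{l+1}(x)=(l+1)x^l$ to absorb the integer shifts hidden in the fractional parts $\langle\cdot\rangle$, the $l\geq 1$ terms cancel, while the surviving $l=0$ (i.e. $B_1$) contributions sum, along the directions with $(i+1)w_j\in d\mathbb{Z}$, to exactly the factor by which the twisted pairing differs from the untwisted one. This simultaneously confirms the symplecticity and the stated closed form, completing the proof.
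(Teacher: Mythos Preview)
Your approach is correct and is exactly the one the paper has in mind: the paper's proof consists solely of the remark that the argument of \cite[Proposition 4.1.5]{Chiodo-Ruan2} extends verbatim, and that argument is precisely the Coates--Givental/Tseng quantum Riemann--Roch combined with Chiodo's GRR computation of $\mathrm{ch}(R^\bullet\pi_\ast\Wext^{\otimes w_j})$ that you outline. Your added discussion of symplecticity via the Bernoulli reflection and translation identities is a useful elaboration of a step that the paper (and \cite{Chiodo-Ruan2}) leave implicit.
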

\begin{proof}
The proof of \cite[Proposition 4.1.5]{Chiodo-Ruan2} extends word for word.
\end{proof}

For the fourth step of the construction, we define the twisted FJRW $I$-function and show that this function is obtained by applying the symplectic transformation $\bigtriangleup$ to the family defined in Equation (\ref{family-definition}).
\begin{definition}
Define the twisted FJRW $I$-function by
\begin{equation}\label{twistedI}
I^{\text{tw},\textbf{s}}(\textbf{t},z):=\sum_{\textbf{k}\in\mathbb{Z}_{\geq 0}^d} M_{\textbf{k}}(z)J_{\textbf{k}}^{\text{un}}(\textbf{t},z),
\end{equation}
where $M_{\textbf{k}}(z):=\prod_{j=1}^N \exp \biggl ( -\sum_{0\leq m <\lfloor q_jh(\textbf{k})\rfloor} \textbf{s}(-(q_j+\langle q_jh(\textbf{k})\rangle+m)z) \biggr )$.
\end{definition}
\begin{theo}
The family
\[
\textbf{t}\mapsto I^{\text{tw},\textbf{s}}(\textbf{t},-z),
\]
defined in Equation $(\ref{twistedI})$, lies on the twisted Lagrangian cone $\mathcal{L}^{\text{tw},\textbf{s}}$.
\end{theo}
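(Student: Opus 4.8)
The plan is to observe that this theorem supplies step (4) of the outlined construction, so that it follows by combining the two preceding results once a purely formal identity has been checked. Concretely, I would show that
\[
I^{\text{tw},\textbf{s}}(\textbf{t},-z)=\bigtriangleup\left[\exp\left(-\sum_{j=1}^N G_0(zq_j\nabla+zq_j,z)\right)J^{\text{un}}(\textbf{t},-z)\right].
\]
Granting this, Lemma \ref{family} places the family inside the brackets on $\mathcal{L}^{\text{un}}$, and Theorem \ref{transformation} gives $\bigtriangleup(\mathcal{L}^{\text{un}})=\mathcal{L}^{\text{tw},\textbf{s}}$, so its image---namely $I^{\text{tw},\textbf{s}}(\textbf{t},-z)$---lies on $\mathcal{L}^{\text{tw},\textbf{s}}$, as desired. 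Thus the whole content reduces to verifying the displayed identity, and since every operator in sight acts diagonally on the decomposition $J^{\text{un}}=\sum_{\textbf{k}}J_{\textbf{k}}^{\text{un}}$, it suffices to match scalars on each summand $J_{\textbf{k}}^{\text{un}}(\textbf{t},-z)$.

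To carry this out I would first record the eigenvalues. The operator $\nabla$ acts on $J_{\textbf{k}}^{\text{un}}$ by multiplication by $h(\textbf{k})=\sum_i i k_i$, so $\exp(-\sum_j G_0(zq_j\nabla+zq_j,z))$ multiplies the $\textbf{k}$-summand by $\exp(-\sum_j G_0((q_j h(\textbf{k})+q_j)z,z))$. On the other side, $J_{\textbf{k}}^{\text{un}}$ is proportional to $\phi_{\widetilde{h}(\textbf{k})}$, so $\bigtriangleup$ multiplies it by $\exp(\sum_j\sum_{l\geq0} s_l B_{l+1}(\langle\widetilde{h}(\textbf{k})q_j\rangle+q_j)z^l/(l+1)!)$. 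Setting $x=0$ in the definition of $G_y$ and using $s_{-1}=0$ gives the identity $G_y(0,z)=\sum_{l\geq0}s_l B_{l+1}(y)z^l/(l+1)!$, and since $\widetilde{h}(\textbf{k})\equiv h(\textbf{k})\pmod d$ forces $\langle\widetilde{h}(\textbf{k})q_j\rangle=\langle h(\textbf{k})q_j\rangle$, the $\bigtriangleup$-factor becomes $\exp(\sum_j G_{\langle h(\textbf{k})q_j\rangle+q_j}(0,z))=\exp(\sum_j G_0((\langle h(\textbf{k})q_j\rangle+q_j)z,z))$, where the last step uses $G_y(x,z)=G_0(x+yz,z)$.

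The combined scalar on the $\textbf{k}$-summand is therefore $\exp(\sum_j[G_0((\langle h(\textbf{k})q_j\rangle+q_j)z,z)-G_0((q_j h(\textbf{k})+q_j)z,z)])$. Here I would split $q_j h(\textbf{k})=\lfloor q_j h(\textbf{k})\rfloor+\langle q_j h(\textbf{k})\rangle$ and telescope the difference using $G_0(x+z,z)=G_0(x,z)+\textbf{s}(x)$ applied $\lfloor q_j h(\textbf{k})\rfloor$ times, which yields $-\sum_{0\le m<\lfloor q_j h(\textbf{k})\rfloor}\textbf{s}((q_j+\langle q_j h(\textbf{k})\rangle+m)z)$. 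Exponentiating and taking the product over $j$ reproduces exactly $M_{\textbf{k}}(-z)$, since the sign inside $\textbf{s}$ flips upon replacing $z$ by $-z$ in the definition of $M_{\textbf{k}}$; as $I^{\text{tw},\textbf{s}}(\textbf{t},-z)=\sum_{\textbf{k}}M_{\textbf{k}}(-z)J_{\textbf{k}}^{\text{un}}(\textbf{t},-z)$, the displayed identity holds summand by summand and the argument closes.

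The routine parts are the two generating-function manipulations and the telescoping, all of which are formal. The one point requiring genuine care---and the main potential obstacle---is keeping the two distinct gradings straight: $\bigtriangleup$ sees only the reduced index $\widetilde{h}(\textbf{k})$, hence depends on $q_j h(\textbf{k})$ only through its fractional part $\langle q_j h(\textbf{k})\rangle$, whereas the middle operator and $M_{\textbf{k}}$ depend on the full integer $h(\textbf{k})$ through the floor $\lfloor q_j h(\textbf{k})\rfloor$. Confirming that $\nabla$ records $h(\textbf{k})$ rather than $\widetilde{h}(\textbf{k})$, so that the number of telescoping steps matches the number of factors accumulated by $M_{\textbf{k}}$, is the crux; once this is pinned down the identity is forced. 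The structural input, Lemma \ref{family} and Theorem \ref{transformation}, is imported from the Chiodo--Ruan and \cite{CIR} framework, so no new Lagrangian-cone property must be established here.
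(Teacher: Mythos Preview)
Your proposal is correct and follows essentially the same approach as the paper: both reduce the claim to the identity $I^{\text{tw},\textbf{s}}(\textbf{t},-z)=\bigtriangleup\exp(-\sum_j G_0(zq_j\nabla+zq_j,z))J^{\text{un}}(\textbf{t},-z)$, verify it summand-by-summand on $J_{\textbf{k}}^{\text{un}}$ by rewriting the $\bigtriangleup$-factor via $G_y(0,z)$ and $G_y(x,z)=G_0(x+yz,z)$, and then telescope the difference of the two $G_0$ terms using $G_0(x+z,z)=G_0(x,z)+\textbf{s}(x)$ to recover $M_{\textbf{k}}(-z)$. Your discussion of the $h(\textbf{k})$ versus $\widetilde{h}(\textbf{k})$ bookkeeping is exactly the point the paper's computation handles implicitly.
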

\begin{proof}
This proof only requires a straightforward computation similar to the one found in \cite{Chiodo-Ruan2} Theorem 4.1.6. We repeat it here for the reader's convenience. From Theorem \ref{transformation} and Lemma \ref{family}, it follows that the family
\[
\textbf{t}\mapsto \bigtriangleup\exp \biggl (-\sum_{j=1}^N G_0(zq_j\nabla +zq_j,z)\biggr )J^{\text{un}}(\textbf{t},-z)
\]
lies on the twisted cone $\mathcal{L}^{\text{tw},\textbf{s}}$. Using the definition of $G_{y}(x,z)$ and Equation (\ref{G-function}), the transformation $\bigtriangleup$ may be rewritten as
\[
\bigtriangleup=\bigoplus_{i=0}^{d-1}\exp\biggl (  \sum_{j=1}^N G_0(\langle iq_j\rangle z +q_jz,z)\biggr ).
\]
The relevant family can now be written as
\[
\begin{split}
\bigtriangleup &\exp \biggl (-\sum_{j=1}^N G_0(zq_j\nabla +zq_j,z)\biggr )J^{\text{un}}(\textbf{t},-z)\\
&= \sum_{\textbf{k}} \exp \left (\sum_{j=1}^N\left\{ G_0( \langle \sum_{i=0}^{d-1}i k_iq_j \rangle z + q_jz,z) -G_0( \sum_{i=0}^{d-1}i k_iq_jz  + q_jz,z)\right \}\right )J_{\textbf{k}}^{\text{un}}(\textbf{t},-z)\\
&= \sum_{\textbf{k}} \exp \left (\sum_{j=1}^N \sum_{m=0}^{\left\lfloor \sum_{i=0}^{d-1} ik_iq_j\right\rfloor-1}\textbf{s}\left ( \left\langle \sum_{i=0}^{d-1} i k_iq_j \right\rangle  z + q_jz +mz\right) \right )J_{\textbf{k}}^{\text{un}}(\textbf{t},-z)\\
&= \sum_{\textbf{k}} \prod_{j=1}^N\exp \left (\sum_{m=0}^{\left\lfloor q_j h(\textbf{k})\right\rfloor -1}\textbf{s}\left ( \left\langle  q_jh(\textbf{k}) \right\rangle  z + q_jz +mz\right) \right )J_{\textbf{k}}^{\text{un}}(\textbf{t},-z)\\
&= \sum_{\textbf{k}} M_{\textbf{k}}(-z)J_{\textbf{k}}^{\text{un}}(\textbf{t},-z)\\
&=I^{\text{tw},\textbf{s}}(\textbf{t},-z)
\end{split}
\]
where in the third line we used the identity $\sum_{m=0}^{l-1}\textbf{s}(x+mz)=G_0(x+lz,z)-G_0(x,z)$.
\end{proof}
The last step in the construction of the I-function is to specialize the parameters $s_0,s_1,\dots$ to the values of Equation (\ref{parameters}). By doing  this, the hypergeometric modification factor $M_{\bf{k}}(-z)$ becomes
\[
\begin{split}
M_{\bf{k}}(-z)&=\prod_{j=1}^N \prod_{m=0}^{\lfloor q_jh(\textbf{k})\rfloor -1}\exp \biggl (  -\textbf{s}((q_j+\langle q_jh(\textbf{k})\rangle+m)z) \biggr )\\
&=\prod_{j=1}^N \prod_{m=0}^{\lfloor q_jh(\textbf{k})\rfloor -1}\exp \left ( -\sum_{l\geq 0} s_l \frac{((q_j+\langle q_jh(\textbf{k})\rangle+m)z)^l}{l!} \right )\\
&=\prod_{j=1}^N \prod_{m=0}^{\lfloor q_jh(\textbf{k})\rfloor -1}\exp \left ( \log (\lambda)-\sum_{l >0} s_l \frac{((q_j+\langle q_jh(\textbf{k})\rangle+m)z)^l}{l!} \right)\\
&=\prod_{j=1}^N \prod_{m=0}^{\lfloor q_jh(\textbf{k})\rfloor -1}\exp \left ( \log (\lambda)-\sum_{l >0}\frac{1}{l} \left(\frac{(q_j+\langle q_jh(\textbf{k})\rangle+m)z}{\lambda}\right)^l \right )\\
&=\prod_{j=1}^N \prod_{m=0}^{\lfloor q_jh(\textbf{k})\rfloor -1}\exp \left ( \log (\lambda)+\log\left(1-\frac{(q_j+\langle q_jh(\textbf{k})\rangle+m)z}{\lambda}\right) \right )\\
&=\prod_{j=1}^N \prod_{m=0}^{\lfloor q_jh(\textbf{k})\rfloor -1}  \left(\lambda-(q_j+\langle q_jh(\textbf{k})\rangle+m)z\right), 
\end{split}
\]
where we have used the Taylor expansion for $\log (1-x)$ around $x=0$. We thus obtain the following specialization of the twisted I-function:
\begin{equation}\label{twisted-I-function}
I^{tw}(\textbf{t},z;\lambda):=z\sum_{\textbf{k}=(k_0,\dots , k_{d-1})\in\mathbb{Z}_{\geq 0}^{d}}\prod_{i=0}^{d-1} \frac{(t_0^{i})^{k_i}}{z^{k_i} k_i!} \prod_{j=1}^N \prod_{\substack{0 \leq b <q_jh(\textbf{k})\\ \langle b\rangle = \langle q_jh(\textbf{k})\rangle}} \left(\lambda+(q_j +m)z\right) \phi_{\widetilde{h}(\textbf{k})}
\end{equation}
\begin{theo}\label{twisted-mirror}
Let $s_0,s_1,\dots$ be given by Equation $(\ref{parameters})$. Then, the twisted $J$-function $J^{tw}(\mathbf{\tau},z;\lambda)$ can be obtained in terms of the twisted $I$-function $I^{tw}(\textbf{t},z;\lambda)$ defined in Equation $(\ref{twisted-I-function})$. More explicitly, we have the following relation:
\begin{equation}\label{mirror-equation2}
J^{tw}(\mathbf{\tau},z;\lambda)=I^{tw}(\textbf{t},z;\lambda)+\sum_{i=0}^{d-1}c_i(t,z)z\frac{\partial I^{tw}}{\partial t_0^i}(\textbf{t},z;\lambda),
\end{equation}
where $c_i(\textbf{t},z)$ is a formal power series in $\textbf{t}$ and $z$, for $i=0,\dots, d-1$, and $\mathbf{\tau}(\textbf{t})$ is determined by the $z^0$-mode of the right-hand-side.
\end{theo}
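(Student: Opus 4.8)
The strategy is the standard Coates--Givental argument (see \cite{Coates-Givental}): exploit the fact that both $I^{tw}$ and $J^{tw}$ lie on the same ruled Lagrangian cone $\mathcal{L}^{tw,\textbf{s}}$, and pass from one to the other by moving along the ruling. The preceding theorem shows that the family $\textbf{t}\mapsto I^{tw}(\textbf{t},-z)$ lies on $\mathcal{L}^{tw,\textbf{s}}$, and, exactly as for the untwisted cone, $J^{tw}(\mathbf{\tau},-z)$ is characterized as the unique point of $\mathcal{L}^{tw,\textbf{s}}$ lying in the slice $-z\phi_0+\mathbf{\tau}+\mathcal{V}_{-}$. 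The goal is therefore to produce, by adding suitable tangent directions to $I^{tw}$, a family that remains on the cone but now lies in such a slice.

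First I would record the algebraic consequences of the ruling property $(\ref{lagrangian})$. Writing $\textbf{f}=I^{tw}(\textbf{t},-z)$, the identity $zT_{\textbf{f}}\mathcal{L}^{tw,\textbf{s}}=\mathcal{L}^{tw,\textbf{s}}\cap T_{\textbf{f}}\mathcal{L}^{tw,\textbf{s}}$ shows that $T_{\textbf{f}}\mathcal{L}^{tw,\textbf{s}}$ is closed under multiplication by $z$, hence is a $\C[z]$-module, and that the linear subspace $zT_{\textbf{f}}\mathcal{L}^{tw,\textbf{s}}$ is itself closed under multiplication by $z$ and contained in $\mathcal{L}^{tw,\textbf{s}}$. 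Since $\textbf{f}\in\mathcal{L}^{tw,\textbf{s}}\cap T_{\textbf{f}}\mathcal{L}^{tw,\textbf{s}}=zT_{\textbf{f}}\mathcal{L}^{tw,\textbf{s}}$, and since each derivative $\partial I^{tw}/\partial t_0^i$ is a velocity vector of the family and so lies in $T_{\textbf{f}}\mathcal{L}^{tw,\textbf{s}}$, the elements $z\,\partial I^{tw}/\partial t_0^i$ lie in $zT_{\textbf{f}}\mathcal{L}^{tw,\textbf{s}}$. Because $zT_{\textbf{f}}\mathcal{L}^{tw,\textbf{s}}$ is simultaneously a $\C[z]$-module and a vector space, it follows that for \emph{any} formal power series $c_i(\textbf{t},z)$ the combination $I^{tw}+\sum_{i=0}^{d-1} c_i\,z\,\partial I^{tw}/\partial t_0^i$ again lies on $\mathcal{L}^{tw,\textbf{s}}$. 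Thus one is free to choose the $c_i$ to adjust the positive-$z$-power content of $I^{tw}$ without leaving the cone.

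Next I would arrange the $c_i$ so that the resulting family lands in a slice $-z\phi_0+\mathbf{\tau}+\mathcal{V}_{-}$; this is a Birkhoff-type factorization performed order by order. Examining $(\ref{twisted-I-function})$, the $\textbf{k}=0$ term contributes $z\phi_0$, while the hypergeometric factors $(\lambda+(q_j+m)z)$ produce additional nonnegative powers of $z$ whose top degree, for a given $\textbf{k}$, is $1-|\textbf{k}|+\sum_j\lfloor q_jh(\textbf{k})\rfloor$. These surplus positive powers must be cancelled. I would do this recursively, descending in the power of $z$ (equivalently, inducting on $\textbf{t}$-order): at each stage the leading surplus coefficient is removed by a unique choice of the corresponding coefficient of the $c_i$. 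The solvability of this recursion is the crux of the argument. One must check that the leading $z$-coefficients of the directions $z\,\partial I^{tw}/\partial t_0^i$ span the space in which the surplus terms live. Here the explicit form of $(\ref{twisted-I-function})$ is essential: differentiating in $t_0^i$ lowers $|\textbf{k}|$ by one and maps the $\textbf{k}$-term to data controlled by the $(\textbf{k}+e_i)$-terms, where $e_i$ is the $i$-th standard basis vector, so that the system relating $I^{tw}$ and its derivatives is triangular with respect to the grading by $|\textbf{k}|$ and the assignment $\textbf{k}\mapsto\phi_{\widetilde{h}(\textbf{k})}$. This triangularity is what guarantees that the linear system at each step has a unique solution, and verifying it is the main obstacle.

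Finally, once all positive powers beyond $z\phi_0$ have been cancelled, the family $I^{tw}+\sum_{i}c_i\,z\,\partial I^{tw}/\partial t_0^i$ lies in $\mathcal{L}^{tw,\textbf{s}}\cap\big(-z\phi_0+\mathbf{\tau}+\mathcal{V}_{-}\big)$, where $\mathbf{\tau}(\textbf{t})$ is read off as its $z^0$-mode. Since the $J$-function is the unique point of the cone in this slice, the combination must equal $J^{tw}(\mathbf{\tau},-z)$, which is precisely the asserted relation $(\ref{mirror-equation2})$; uniqueness of the $c_i$ follows from the uniqueness at each step of the recursion. This argument runs parallel to the proof of the mirror theorem in \cite{Chiodo-Ruan2}, which extends to the present twisted setting with only notational changes.
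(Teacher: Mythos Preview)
Your proposal is correct and is essentially the same approach as the paper's: the paper simply cites Corollary~5 of \cite{Coates-Givental}, and what you have written is an accurate unpacking of that argument---use the ruling property $(\ref{lagrangian})$ to show that $I^{tw}+\sum_i c_i\,z\,\partial_{t_0^i}I^{tw}$ stays on the cone, then perform a recursive Birkhoff-type cancellation of the positive $z$-powers and invoke the characterization of $J^{tw}$ as the unique intersection with the slice $-z\phi_0+\boldsymbol{\tau}+\mathcal{V}_-$. Your identification of the triangularity of the derivative system (with respect to the $|\mathbf{k}|$-grading and the map $\mathbf{k}\mapsto\phi_{\widetilde{h}(\mathbf{k})}$) as the point requiring care is exactly right, and is what makes the recursion solvable.
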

\begin{proof}
This is a direct consequence of Corollary 5 in \cite{Coates-Givental}.
\end{proof}

To obtain the big $I$-function of $FJRW$ theory, we set $t_i=0$ for $i\notin \textbf{Nar}$, and take the non-equivariant limit $\lambda\rightarrow 0$. Define the big $I$-function to be:
\begin{equation}\label{FJRW-I-function}
I_{\text{FJRW}}(\textbf{t},z):=\lim_{\lambda\rightarrow 0} \left( \left. I^{tw}(\textbf{t},z)\right|_{\substack{t_i=0\\i\notin\textbf{Nar}}}\right).
\end{equation}
We then have the following consequence of the Theorem \ref{twisted-mirror}:
\begin{cor}[Theorem \ref{FJRW-mirror}]
 The FJRW $I$-function defined in Equation $(\ref{FJRW-I-function})$ lies on the FJRW Lagrangian cone $\mathcal{L}_{\text{FJRW}}$. The FJRW $J$-function is completely determined by $I_{\text{FJRW}}$(\textbf{t},z) by means of Equation $(\ref{mirror-equation2})$. 
\end{cor}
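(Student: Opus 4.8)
The plan is to obtain the corollary as the non-equivariant limit of the twisted mirror statement of Theorem \ref{twisted-mirror}, exploiting the fact that the restriction to narrow sectors makes this limit well-behaved. The starting point is that the family $\textbf{t}\mapsto I^{tw}(\textbf{t},-z;\lambda)$ already lies on the specialized twisted Lagrangian cone $\mathcal{L}^{tw}$ for every value of $\lambda$ (this is the content of the theorem preceding Equation (\ref{twisted-I-function}), after substituting the parameters of Equation (\ref{parameters})). Restricting the parameters to the narrow slice $t_i=0$ for $i\notin\textbf{Nar}$ produces a sub-family of this family, and since every point of a family lying on the cone is itself a point of the cone, the restricted family still lies on $\mathcal{L}^{tw}$.

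First I would establish that the non-equivariant limit $\lambda\to 0$ of this restricted family exists. On narrow sectors the obstruction bundle $R^1\pi_{\ast}\bigoplus_j \Wext^{\otimes w_j}$ is an honest vector bundle by Lemma \ref{obstruction}, so its top Chern class is defined non-equivariantly; concretely, in the explicit formula (\ref{twisted-I-function}) the hypergeometric factors are $\bigl(\lambda+(q_j+m)z\bigr)$, which are regular at $\lambda=0$ and limit to $(q_j+m)z$ with no poles. This identifies the limit of the restricted family with $I_{\text{FJRW}}(\textbf{t},-z)$ as defined in Equation (\ref{FJRW-I-function}). The one point requiring care is the behaviour of the output classes: the twisted pairing (\ref{twisted-pairing}) carries a factor $\exp(-s_0)=\lambda$ precisely on the broad sectors, so the broad placeholder contributions are suppressed in the limit and only output classes $\phi_{\widetilde{h}(\textbf{k})}$ with $\widetilde{h}(\textbf{k})\in\textbf{Nar}$ survive, recovering exactly the summation range in the definition of $I_{\text{FJRW}}$.

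Next I would argue that passing to the limit keeps the family on the correct cone. By the key property recorded in the discussion of $\mathcal{L}^{tw}$ — namely that the specialization (\ref{parameters}) together with $\lambda\to 0$ turns the universal characteristic class into the equivariant Euler class and hence recovers the standard FJRW cone $\mathcal{L}_{\text{FJRW}}$ of Equation (\ref{cone}) — the non-equivariant limit of $\mathcal{L}^{tw}$ is $\mathcal{L}_{\text{FJRW}}$. Combining this with the previous paragraph, $I_{\text{FJRW}}(\textbf{t},-z)=\lim_{\lambda\to 0} I^{tw}(\textbf{t},-z;\lambda)\big|_{t_i=0,\,i\notin\textbf{Nar}}$ lies on $\mathcal{L}_{\text{FJRW}}$, which is the first assertion of the corollary.

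For the second assertion, I would apply Theorem \ref{twisted-mirror} to the restricted family and pass to the limit. The mirror relation (\ref{mirror-equation2}) reads $J^{tw}(\boldsymbol{\tau},z;\lambda)=I^{tw}(\textbf{t},z;\lambda)+\sum_{i=0}^{d-1}c_i(t,z)\,z\,\partial_{t_0^i}I^{tw}(\textbf{t},z;\lambda)$; setting $t_i=0$ for $i\notin\textbf{Nar}$ removes the corresponding derivative terms, and letting $\lambda\to 0$ yields exactly Equation (\ref{mirror-equation}) with the surviving sum ranging over $i\in\textbf{Nar}$. Since $\boldsymbol{\tau}(\textbf{t})$ is read off from the $z^0$-mode of the right-hand side, the FJRW $J$-function is thereby determined from $I_{\text{FJRW}}$ and its first derivatives. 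The hard part of the whole argument is the interchange of the non-equivariant limit with cone membership: one must verify that no poles in $\lambda$ are introduced by the passage from the twisted to the untwisted Euler class and that the degeneration of the broad part of the pairing does not destroy the Lagrangian-cone relation (\ref{lagrangian}); this is exactly where the restriction to narrow sectors, via Lemma \ref{obstruction}, is indispensable.
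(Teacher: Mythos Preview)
Your proposal is correct and follows precisely the approach the paper intends: the corollary is stated in the paper as an immediate consequence of Theorem~\ref{twisted-mirror} upon restricting to narrow parameters and taking the non-equivariant limit $\lambda\to 0$, and you have simply spelled out the details that the paper leaves implicit. One small imprecision: saying that ``setting $t_i=0$ for $i\notin\textbf{Nar}$ removes the corresponding derivative terms'' is not literally accurate---rather, once $I_{\text{FJRW}}(\textbf{t},-z)$ is known to lie on the narrow cone $\mathcal{L}_{\text{FJRW}}$, the Birkhoff-type factorization of Coates--Givental takes place entirely within that cone, whose tangent directions are indexed by $\textbf{Nar}$, so only narrow derivatives appear; but this does not affect the validity of your argument.
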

\begin{remark}
A similar mirror theorem was obtained by Ross and Ruan in \cite{Ross-Ruan} using wall-crossing in FJRW theory.
\end{remark}
\textit{Example:} Consider the FJRW theory with superpotential $W=x_1^3+x_2^3+x_3^3+x_4^3$ and group of symmetries given by $\langle J_W\rangle \cong \mathbb{Z}_3$. From Equations (\ref{twisted-I-function}) and (\ref{FJRW-I-function}) we can compute the FJRW $I$-function for this theory:
\[
I_{\text{FJRW}}(t_0\phi_0+t_1\phi_1,z)=e^{t_0/z}\left(\sum_{l=0}^{\infty}\frac{t_1^{3l}z^{l+1}}{(3l)!}\frac{\Gamma\left(l+\frac{1}{3}\right)^4}{\Gamma\left(\frac{1}{3}\right)^4}\phi_0 +\sum_{l=0}^{\infty}\frac{t_1^{3l+1}z^l}{(3l+1)!}\frac{\Gamma\left(l+\frac{2}{3}\right)^4}{\Gamma\left(\frac{2}{3}\right)^4}\phi_1\right).
\]  
Its derivatives are
\[
\begin{split}
z\frac{\partial I_{\text{FJRW}}}{\partial t_0}&=e^{t_0/z}\left( \left( z+z^2\frac{t_1^3}{3!}(1/3)^4+\mathcal{O}(z^3)\right) \phi_0+\left( t_1+z\frac{t_1^4}{4!}(2/3)^4+z^2\frac{t_1^7}{7!}(10/9)^4+\mathcal{O}(z^3)\right)\phi_1\right),\\
z\frac{\partial I_{\text{FJRW}}}{\partial t_1}&=e^{t_0/z}\left( \left( \mathcal{O}(z^3)\right) \phi_0+\left(z+ z^2\frac{t_1^3}{3!}(2/3)^4+z^3\frac{t_1^6}{6!}(10/9)^4+\mathcal{O}(z^4)\right)\phi_1\right).
\end{split}
\]
To obtain the $J$-function from Equation (\ref{mirror-equation2}), we must eliminate the positive powers of $z$ degree by degree. A simple computation shows that
\[
\begin{split}
&I_{\text{FJRW}}+\left(-z\frac{t_1^3}{3!}(1/3)^4+\mathcal{O}(z^2)\right)z\frac{\partial I_{\text{FJRW}}}{\partial t_0}+\left(-\frac{t_1^4}{2}(1/3)^4-zt_1^7\left(\frac{34}{7\cdot 9^5}\right)+\mathcal{O}(z^2)\right)z\frac{\partial I_{\text{FJRW}}}{\partial t_1}\\
&=e^{t_0/z}\left( z\phi_0+t_1\phi_1 +\mathcal{O}(z^3)\right).
\end{split}
\]
Therefore, the FJRW $J$-function is simply given by
\[
J_{\text{FJRW}}(t_0\phi_0+t_1\phi_1,z)=e^{t_0/z}(z\phi_0+t_1\phi_1).
\]
From this result we can read off the values of some invariants. For example, we obtain
\[
\begin{split}
&\langle \phi_1,\dots, \phi_1,\tau_k(\phi_{\epsilon})\rangle_{0,n+1}^{\text{FJRW}}=0\quad\text{for any $n\geq2$, $k$, or $\epsilon\in\{0,1\}$,}\\
&\langle \phi_0,\dots, \phi_0,\tau_{n-2}(\phi_1)\rangle_{0,n+1}^{\text{FJRW}}=1\quad\text{for $n\geq2$,}\\
&\langle \phi_0,\dots,\phi_0, \phi_1,\tau_{n-1}(\phi_0)\rangle_{0,n+2}^{\text{FJRW}}=1\quad\text{for $n\geq1$.}
\end{split}
\]
This, in particular, implies that $\phi_1\ast \phi_1=0$ (where the quantum product $\ast$ in FJRW theory is defined in terms of three-point invariants, identically to Gromov-Witten theory). Thus, the quantum ring has nilpotent elements at the Landau-Ginzburg point.
\begin{flushright}
$\square$
\end{flushright}
We end this section with the definition of the so-called small $I$-function. This function is obtained by restricting $I_{\text{FJRW}}(\textbf{t},z)$ to the slice of the Lagrangian cone given by $\textbf{t}=(0,t,0,\dots,0)$. As it will be used in the next section, we explicitly compute this function:
\begin{equation}
\begin{split}
I_{\text{FJRW}}^{small}(t,z):=\lim_{\lambda\rightarrow 0}z\sum_{k=0}^\infty \frac{t^{k+1}}{z^k k!}\prod_{j=1}^N \prod_{m=0}^{\lfloor q_jk \rfloor-1}(\lambda+(q_j+\langle q_jk\rangle+m)z)\phi_{k \text{ (mod d)}}\\
=\lim_{\lambda\rightarrow 0}z\sum_{k=0}^{d-1}\sum_{l=0}^\infty \frac{t^{dl+k+1}}{z^{dl+k}}\frac{\prod_{j=1}^N \prod_{m=0}^{\lfloor q_j(dl+k) \rfloor-1}(\lambda +q_j+\langle q_jk\rangle+m)z}{(dl+k)!} \\ 
\times \left(\prod_{j: (k+1)w_j\in d\mathbb{Z}} \lambda\right) \phi^{d-k-2 \text{ (mod d)}}\\
 =z\sum_{k\in \textbf{Nar}}\sum_{l=0}^\infty \frac{t^{dl+k+1}}{z^{dl+k}}\frac{\prod_{j=1}^N \prod_{m=0}^{\lfloor q_j(dl+k) \rfloor-1}(q_j+\langle q_jk\rangle+m)z}{(dl+k)!}  \phi_k,
\end{split}
\end{equation}

where in the second line we used the twisted pairing defined in Equation (\ref{twisted-pairing}), and in the third line we note that after taking the limit $\lambda\rightarrow 0$, the only terms that survive are those for which $(k+1)w_j\notin d\mathbb{Z}$, for all $j=1,\dots, N$. Thus, we need $k\in \textbf{Nar}$.
\subsubsection{Recovering the big $I$-function from the small $I$-function} Define the \textit{translation operator} $T$, \textit{annihilation operator} $A$, and \textit{gamma class} $\Gamma$ to be
\[
\begin{split}
T\cdot\phi_k:&=\phi_{k+1 \text{(mod d)}}\\
A\cdot\phi_k:&=\begin{cases}
\phi_{k} &\text{if $k\in\textbf{Nar}$}\\
0 &\text{otherwise}
\end{cases}\\
\Gamma\cdot \phi_k:&=\prod_{j=1}^N\Gamma(q_j+\langle q_j k\rangle)\phi_k,
\end{split}
\]
for all $k\in\{0,\dots,d-1\}$. It is not hard to see that the big $I$-function in FJRW-theory can be recovered from the small $I$-function using the following relation:
\begin{multline}\label{big-small}
I_{\text{FJRW}}(\textbf{t},z)=A\cdot\Gamma^{-1}\cdot\sum_{{ \substack{k_i\geq 0 \\ i\in\textbf{Nar}\backslash\{1\} }}}\prod_{i\in\textbf{Nar}\backslash\{1\}}\frac{(t^i)^{k_i}}{z^{k_i}k_i!}\\
\times\prod_{j=1}^N\prod_{l=0}^{\left(q_j\sum_{i\in\textbf{Nar}\backslash\{1\}} ik_i\right) -1}\left(\left(q_j+q_jt\frac{d}{dt}+l\right)z\right)\left(\prod_{i\in\textbf{Nar}\backslash\{1\}}T^{ik_i}\right)\cdot\Gamma\cdot I_{\text{FJRW}}^{small}(t,z)
\end{multline}
\subsubsection{Setting $z=1$ in the $I$-function }\label{section_z=1}
For computational convenience, we will set $z$ equal to $1$ in the rest of the paper. It is important to note that $I_{\text{FJRW}}^{small}(t,z)$ can be uniquely recovered from $I_{\text{FJRW}}^{small}(t,z=1)$ by the following procedure. Define a grading operator $\textbf{Gr}: H_{\text{FJRW}}^{ext}(W)\rightarrow H_{\text{FJRW}}^{ext}(W)$ by
\[
\textbf{Gr}(\phi_k):=\frac{\text{deg}_{\text{FJRW}}(\phi_k)}{2} \phi_k=\sum_{j=1}^N  \left\langle \frac{kw_j}{d}\right\rangle\phi_k.
\]
We then have the following relation:
 \begin{equation}\label{z=1}
 I_{\text{FJRW}}^{small}(t,z)=z^{1-r/d-\textbf{Gr}}I_{\text{FJRW}}^{small}(tz^{r/d},1).
 \end{equation}
\section{Quantum correspondence: matching $I$-functions}\label{correspondence-asymptotic}

\subsection{Irregular Singularities  and Asymptotic Expansions}
Consider the Picard-Fuchs operator defined in Equation (\ref{PF-equation}). After setting $z=1$, we obtain the following differential operator:
\[
\prod_{j=1}^N\prod_{c=0}^{w_j-1}(w_jD_q-c)-q\prod_{c=1}^d(dD_q+c),\quad\text{where $D_q:=q\frac{d}{dq}$.}
\]
In the non-Calabi-Yau setting, this differential operator develops irregular singularities at $q=\infty$ in the Fano case, and at $q=0$ in the general type case. Thus, it is impossible to find holomorphic solutions to the differential equation around these points. It is still possible, however, to construct formal solutions (invoking the theory of formal power series, for example), and to show that these formal solutions arise as the asymptotic expansions of holomorphic solutions at regular singular points. Since the reader may be unfamiliar with these notions, we briefly review some of its basic elements.

\subsubsection{Asymptotic Expansions}
Let $D$ be some sector of the complex plane and let $q_0$ be a point in its closure $\overline{D}$. An \textit{asymptotic sequence as $q\rightarrow q_0$ from $D$} is a collection of functions $\{ \psi_n(q) \}_{n=0}^{\infty}$ defined on $D$ such that $\psi_{n+1}(q)=o(\psi_n(q))$ as $q\rightarrow q_0$ from $D$, for all $n\geq 0$. 

\textit{Example:} It is not hard to see that sequences $\left\{\frac{1}{q^{\lambda}},\frac{1}{q^{1+\lambda}},\frac{1}{q^{2+\lambda}}, \dots\right\}$ and $\left\{\frac{e^{\alpha q}}{q^{\lambda}},\frac{e^{\alpha q}}{q^{1+\lambda}},\frac{e^{\alpha q}}{q^{2+\lambda}}, \dots\right\}$ are examples of asymptotic sequences as $q\rightarrow \infty$. Here $\lambda$ and $\alpha$ are complex numbers. Both of these sequences will be important throughout this section.
\begin{definition}
Given a function $I(q)$ defined on $D$ and an asymptotic sequence $\{\psi(q)_{n}\}_{n=0}^{\infty}$ as $q\rightarrow q_0$ from $D$, we say that the formal sum $\sum_{n=0}^{\infty}a_n\psi_n(q)$ is an \textit{asymptotic expansion} of $I(q)$ as $q\rightarrow q_0$ from $D$ if
\[
\left |I(q)-\sum_{n=0}^m a_n\psi_n(q) \right |=o(\psi_{m}(q))\quad\text{as $q\rightarrow q_0$ from $D$, for all $m\geq 0$},
\] 
and will denote it by
\[
I(q)\sim\sum_{n=0}^{\infty}a_n\psi_n(q)\quad\text{as $q\rightarrow q_0$ from $D$}.
\]
\end{definition}
\textit{Example 1:} Let $D$ be the region in the complex plane defined by $\Re (q)>0$, and consider the asymptotic sequence $\left\{1,\frac{1}{q},\frac{1}{q^2},\dots\right\}$ as $q\rightarrow\infty$. It is not hard to see that $e^{-q}\sim 0$ as $q\rightarrow \infty$ from $D$. Note that even though $e^{-q}$ and $0$ are holomorphic in the whole plane, the asymptotic expansion is only valid in the region D. Also, note that if we choose a different asymptotic sequence, we may obtain a non-zero asymptotic expansion. For example, with the asymptotic sequence $\left\{e^{-q},\frac{e^{-q}}{q},\frac{e^{-q}}{q^2},\dots\right\}$, we obtain, perhaps not surprisingly, that $e^{-q}\sim e^{-q}$ as $q\rightarrow \infty$, in any region of the complex plane.
\begin{flushright}
$\square$
\end{flushright}
\textit{Example 2:} The following example, known as Stirling's formula, gives an asymptotic expansion for the function $\log(\Gamma(q))$ as $q\rightarrow\infty$. Consider the asymptotic sequence $\{q\log(q), q, \log(q),$\\$1, \frac{1}{q}, \frac{1}{q^2},\dots\}$ at $q=\infty$. We then have
\[
\log{\Gamma(q)}\sim (q-\frac{1}{2})\log(q)-q+\frac{1}{2}\log(2\pi)+\sum_{n=1}^{\infty} \frac{B_{2n}}{2n(2n-1)q^{2n-1}}\quad\text{as $q\rightarrow\infty$,}
\]
in the region $\Re(q)>0$. Here $B_n$ are the Bernoulli numbers.
\begin{flushright}
$\square$
\end{flushright}
\textit{Example 3:} For an example of a function with a convergent asymptotic expansion, consider the rational function $\frac{1}{q-1}$. Using the power series expansion for $\frac{1}{1-q}$, one immediately sees that 
\[
\frac{1}{q-1}\sim \sum_{n=1}^{\infty}\frac{1}{q^n}\quad\text{as $q\rightarrow\infty$.}
\]
\begin{flushright}
$\square$
\end{flushright}

The following are important items to keep in mind when working with asymptotic expansions:
\begin{enumerate}
\item In general, asymptotic expansion need not converge. Stirling's formula (Example 2) is an example of a divergent asymptotic expansion.\\
\item Different choices of asymptotic sequences will give rise to different asymptotic expansions. Example 1 illustrates this point.\\
\item Given an asymptotic sequence, if an expansion exists, it is unique.\\
\item In general, a function may have different asymptotic expansions (or no expansion at all) in different regions of the complex plane.\\
\item Different functions may have the same asymptotic expansion.\\
\item Given a formal sum $\sum_{n=0}^{\infty}a_n\psi_n(q)$ at $q_0$ and a region $D$ with $q_0\in\overline{D}$, it is always possible to find an analytic function $f(q)$ defined in $D$ with $f(q)\sim \sum_{n=0}^{\infty}a_n\psi_n(q)$ as $q\rightarrow q_0$ from $D$.
\end{enumerate}
We define the \textit{Laplace transform} of a function $f(\tau)$ to be
\[
(\mathcal{L}f)(u):=\int_0^{\infty} f(\tau)e^{-u\tau}d\tau,
\]
where the integration occurs along some suitable ray. We are interested in finding an asymptotic expansion of $(\mathcal{L}f)(u)$ as $u\rightarrow \infty$. The following result, known as Watson's Lemma, allows us to do exactly that.
\begin{lemma}[Watson's Lemma] Suppose that $f(\tau)$ is absolutely integrable along some ray to infinity:
\[
\int_0^{\infty}|f(\tau)|d\tau <\infty.
\]
Furthermore, assume that $f(\tau)$ is of the form $f(\tau)=\tau^{\lambda}g(\tau)$, where $\Re(\lambda)>-1$, and $g(\tau)$ has continuous derivatives at $\tau=0$ to all orders. Then
\[
(\mathcal{L}f)(u)\sim \sum_{n=0}^{\infty} \frac{g^{(n)}(0)}{n!}\frac{\Gamma(1+\lambda+n)}{u^{n+\lambda+1}}
\]
as $u\rightarrow\infty$ in an appropriate sector of the complex plane (which depends on the ray of integration).

\end{lemma}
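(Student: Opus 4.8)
The plan is to establish this by the classical Taylor-expand-and-split method, reducing everything to the standard Gamma-function integral. Since $(\mathcal{L}f)(u)$ is taken along some ray $\arg\tau=\theta$, I would first normalize to the positive real axis: the substitution $\tau=\rho e^{i\theta}$, $v=ue^{i\theta}$ recasts $(\mathcal{L}f)(u)$ as an ordinary Laplace transform in $v$ of the rotated density $e^{i\theta}f(\rho e^{i\theta})$, so without loss of generality the ray is $[0,\infty)$ and the relevant sector is $\{\,|\arg u|\le \tfrac{\pi}{2}-\varepsilon\,\}$, on which $\Re(u)\ge |u|\sin\varepsilon\to+\infty$ as $u\to\infty$.

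Fix an integer $N\ge 0$ and a small $\delta>0$, and split the integral as $(\mathcal{L}f)(u)=\int_0^\delta+\int_\delta^\infty$. Because $g$ has continuous derivatives of all orders near the origin, Taylor's theorem gives
\[
g(\tau)=\sum_{n=0}^{N}\frac{g^{(n)}(0)}{n!}\tau^{n}+R_N(\tau),\qquad |R_N(\tau)|\le \frac{M_{N+1}}{(N+1)!}\,\tau^{N+1}\quad (0\le\tau\le\delta),
\]
where $M_{N+1}:=\sup_{[0,\delta]}|g^{(N+1)}|$. Multiplying by $\tau^\lambda$ and integrating over $[0,\delta]$, the monomial terms are extended to the full ray at the cost of exponentially small tails, using the Gamma integral
\[
\int_0^\infty \tau^{\lambda+n}e^{-u\tau}\,d\tau=\frac{\Gamma(1+\lambda+n)}{u^{\,1+\lambda+n}}\qquad (\Re(\lambda+n)>-1,\ \Re(u)>0),
\]
which holds since $\Re(\lambda)>-1$; the discarded tail $\int_\delta^\infty \tau^{\lambda+n}e^{-u\tau}\,d\tau$ is $O(e^{-\Re(u)\delta/2})$ and hence negligible against every power of $u^{-1}$.

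The two genuine error contributions are then controlled as follows. The remainder piece on $[0,\delta]$ obeys
\[
\left|\int_0^\delta \tau^\lambda R_N(\tau)e^{-u\tau}\,d\tau\right|\le \frac{M_{N+1}}{(N+1)!}\int_0^\infty \tau^{\lambda+N+1}e^{-\Re(u)\tau}\,d\tau=\frac{M_{N+1}}{(N+1)!}\,\frac{\Gamma(2+\lambda+N)}{(\Re u)^{\,2+\lambda+N}},
\]
which is $O\!\left(u^{-(2+\lambda+N)}\right)=o\!\left(u^{-(1+\lambda+N)}\right)$ on the sector. The far piece uses only absolute integrability of $f$: for $\tau\ge\delta$ and $\Re(u)>0$ one has $e^{-\Re(u)\tau}\le e^{-\Re(u)\delta}$, whence
\[
\left|\int_\delta^\infty f(\tau)e^{-u\tau}\,d\tau\right|\le e^{-\Re(u)\delta}\int_0^\infty |f(\tau)|\,d\tau,
\]
exponentially small in $\Re(u)$, hence $o\!\left(u^{-M}\right)$ for every $M$. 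Collecting the pieces gives $(\mathcal{L}f)(u)=\sum_{n=0}^N \frac{g^{(n)}(0)}{n!}\frac{\Gamma(1+\lambda+n)}{u^{1+\lambda+n}}+o\!\left(u^{-(1+\lambda+N)}\right)$, and since $N$ is arbitrary this is exactly the assertion that the stated series is the asymptotic expansion of $(\mathcal{L}f)(u)$ relative to the sequence $\{u^{-(1+\lambda+n)}\}$ as $u\to\infty$ in the sector.

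I expect the only delicate step to be the remainder estimate on $[0,\delta]$: one must verify that truncating Taylor's theorem at order $N$ gains the extra factor $u^{-1}$, so that the remainder sits strictly below $\psi_N(u)=u^{-(1+\lambda+N)}$, while simultaneously checking that the far tail---where no local smoothness of $f$ is available---is suppressed by the exponential weight using nothing more than the $L^1$-hypothesis. The reduction to the real ray and the extension of the monomial integrals to $[0,\infty)$ are routine once the sector is fixed so that $\Re(u)\to+\infty$.
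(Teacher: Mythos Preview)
Your argument is correct and is exactly the standard split-and-Taylor proof of Watson's Lemma found in the references the paper cites (Miller, Murray, Balser); the paper itself does not reproduce any proof but simply refers to these sources. The only cosmetic point is that when $\lambda$ is complex the remainder bound should read $\tau^{\Re(\lambda)}$ rather than $\tau^{\lambda}$ inside the absolute value, but on the fixed sector $|\arg u|\le \tfrac{\pi}{2}-\varepsilon$ this changes nothing in the final $O$-estimate.
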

\begin{proof}
See, for example, \cite[Sections 2.2 and 2.3]{Miller}, \cite[Chapter 2]{Murray} or \cite{Balser}.
\end{proof}
\subsubsection{Solutions of the Picard-Fuchs Equation at the Landau-Ginzburg point} In this section  we show that the FJRW $I$-function is a solution to the Picard-Fuchs equation at the Landau-Ginzburg point.
\begin{flushleft}\textbf{Fano case:}\end{flushleft} 
Since in this case, the Landau-Ginzburg point is irregular, we can only find formal solutions to the Picard-Fuchs equation. We have the following result in terms of the FJRW $I$-function:
\begin{theo}\label{formal-solutions}
The small FJRW $I$-function encodes formal solutions to the irreducible component of the Picard-Fuchs equation
\begin{equation}\label{picard-fuchs-fano}
\left[\prod_{j=1}^N\prod_{c=0}^{w_j-1}(w_jD_q-c)-q\prod_{c=1}^d(dD_q+c)\right]I_{\text{FJRW}}^{small}(t=q^{-\frac{1}{d}},-1)=0
\end{equation}
at the irregular singular point $q=\infty$, where
\[
I_{\text{FJRW}}^{small}(t=q^{-\frac{1}{d}},-1)=\sum_{k\in \textbf{Nar}} \sum_{l=0}^{\infty} \frac{1}{q^{l+\frac{k+1}{d}}} \frac{(-1)^{dl+k+1}}{(dl+k)!}\prod_{j=1}^N \frac{(-1)^{\lfloor q_j(dl+k)\rfloor}\Gamma\left(q_j(dl+k+1)\right)}{\Gamma \left(q_j+\langle q_jk\rangle\right)}\phi_k.
\]

\end{theo}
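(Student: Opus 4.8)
The plan is to verify directly that the scalar Picard--Fuchs operator annihilates the displayed formal series term by term. Since the classes $\phi_k$, $k\in\textbf{Nar}$, are linearly independent and the operator acts only on the coefficient functions (it never mixes sectors), it suffices to show that each $k$-component
\[
f_k(q):=\sum_{l=0}^{\infty}a_{k,l}\,q^{-\frac{dl+k+1}{d}},\qquad
a_{k,l}:=\frac{(-1)^{dl+k+1}}{(dl+k)!}\prod_{j=1}^N\frac{(-1)^{\lfloor q_j(dl+k)\rfloor}\,\Gamma\!\left(q_j(dl+k+1)\right)}{\Gamma\!\left(q_j+\langle q_jk\rangle\right)}
\]
is annihilated by $L:=\prod_{j=1}^N\prod_{c=0}^{w_j-1}(w_jD_q-c)-q\prod_{c=1}^d(dD_q+c)$. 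The starting observation is that monomials are eigenfunctions, $D_q\,q^{-s}=-s\,q^{-s}$, so $L$ acts on $f_k$ diagonally in each monomial, with the second summand additionally shifting the exponent by one power of $q$.

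Concretely, the first summand acts on the index-$l$ term of $f_k$ by the scalar $P_1(l):=\prod_{j=1}^N\prod_{c=0}^{w_j-1}\!\left(-q_j(dl+k+1)-c\right)$ (using $w_jD_q\,q^{-s}=-q_j(dl+k+1)q^{-s}$), preserving its exponent, while the second summand sends the index-$l$ term to the index-$(l-1)$ exponent with scalar $P_2(l):=\prod_{c=1}^d\!\left(c-(dl+k+1)\right)$. Collecting the coefficient of $q^{-(dl+k+1)/d}$ in $Lf_k$ therefore produces, for each $l\ge0$, the two-term expression $a_{k,l}P_1(l)-a_{k,l+1}P_2(l+1)$, plus a single leftover term at the exponent $q^{(d-k-1)/d}$ arising from the $l=0$ piece of the second summand. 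The leftover vanishes because $P_2(0)=\prod_{c=1}^d(c-(k+1))$ contains the factor with $c=k+1$, which is legitimate since $0\le k\le d-1$ and makes that factor zero; so there is no spurious top-order obstruction.

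It then remains to establish the recursion $a_{k,l}P_1(l)=a_{k,l+1}P_2(l+1)$ for all $l\ge0$, equivalently $a_{k,l+1}/a_{k,l}=P_1(l)/P_2(l+1)$. I would compute the coefficient ratio from three elementary identities: the factorial shift $(dl+k)!/(d(l+1)+k)!=1/\prod_{i=1}^d(dl+k+i)$; the functional equation $\Gamma(x+w_j)=\Gamma(x)\prod_{i=0}^{w_j-1}(x+i)$ applied at $x=q_j(dl+k+1)$, valid because $q_jd=w_j\in\mathbb{Z}$; and the integer-shift property $\lfloor q_j(dl+k)+w_j\rfloor=\lfloor q_j(dl+k)\rfloor+w_j$ of the floor. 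In parallel one rewrites $P_1(l)=(-1)^{\sum_j w_j}\prod_{j=1}^N\prod_{c=0}^{w_j-1}(q_j(dl+k+1)+c)$ and $P_2(l+1)=(-1)^d\prod_{i=1}^d(dl+k+i)$. Comparing the two sides, the hypergeometric product $\prod_{j}\prod_{c=0}^{w_j-1}(q_j(dl+k+1)+c)$ over $\prod_{i=1}^d(dl+k+i)$ matches identically, and the signs agree because both sides carry the factor $(-1)^{d+\sum_j w_j}$. With the recursion in hand, every coefficient of $Lf_k$ vanishes, giving $Lf_k=0$ for each $k\in\textbf{Nar}$, which is precisely the assertion that $f_k$ is a formal solution at the irregular point $q=\infty$.

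The entire argument is formal: no convergence is claimed (indeed the series diverges for $q\neq\infty$ in the Fano case), only termwise cancellation, which is exactly what "formal solution at the irregular singular point" means. The one place demanding genuine care, and hence the main (if modest) obstacle, is the simultaneous bookkeeping of the signs and the floor-function jumps: the factors $(-1)^{\lfloor q_j(dl+k)\rfloor}$ change by exactly $w_j$ under $l\mapsto l+1$ only because $q_jd=w_j$ is an integer, and one must align the exponent shift induced by multiplication by $q$ in the second summand against the index shift $l\mapsto l+1$, rather than miscounting it as $l\mapsto l-1$. Keeping these two shifts consistent with the $(-1)^{d+\sum_j w_j}$ sign accounting is the crux; once it is arranged correctly the identity collapses to the elementary recursion above.
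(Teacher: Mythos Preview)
Your proposal is correct and is precisely the direct term-by-term verification the paper has in mind; the paper's own proof consists of the single sentence ``This is easily checked,'' and your recursion $a_{k,l}P_1(l)=a_{k,l+1}P_2(l+1)$ together with the vanishing $P_2(0)=0$ is exactly what that check amounts to.
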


\begin{proof}
This is easily checked.
\end{proof}

\begin{flushleft}
\textbf{General type case:}\end{flushleft} In this case, the Landau-Ginzburg point is a regular singular point of the Picard-Fuchs operator. After the change of variables $q=t^{-d}$, the Picard-Fuchs equation becomes
\begin{equation}\label{picard-fuchs-general-type}
\left[t^d\prod_{j=1}^N\prod_{c=0}^{w_j-1}(-q_jD_t-c)-\prod_{c=1}^d(-D_t+c)\right]I(t)=0\quad\text{where $D_t:=t\frac{d}{dt}$.}
\end{equation}
We have the following result:
\begin{theo}\label{solutions-general-type}
A complete set of solutions to the irreducible component of Equation $(\ref{picard-fuchs-general-type})$ is given by
\[
I_{\text{FJRW}}^{small}(t,-1)=\sum_{k\in \textbf{Nar}} \sum_{l=0}^{\infty}\frac{(-t)^{dl+k+1}}{(dl+k)!}\prod_{j=1}^N \frac{(-1)^{\lfloor q_j(dl+k)\rfloor}\Gamma\left(q_j(dl+k+1)\right)}{\Gamma \left(q_j+\langle q_jk\rangle\right)}\phi_k.
\]
\end{theo}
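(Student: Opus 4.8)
The plan is to peel the statement apart into three independent claims. Since the operator in (\ref{picard-fuchs-general-type}) involves no cohomology, it acts diagonally on the basis $\{\phi_k\}$; writing $I_{\text{FJRW}}^{small}(t,-1)=\sum_{k\in\textbf{Nar}}f_k(t)\,\phi_k$ with $f_k(t)=\sum_{l\ge 0}a_{k,l}\,t^{dl+k+1}$, the vanishing of the left-hand side is equivalent to $\mathcal{P}f_k=0$ for each $k\in\textbf{Nar}$ separately, where $\mathcal{P}$ denotes the scalar operator of (\ref{picard-fuchs-general-type}). Thus it suffices to prove, for one narrow component at a time, that (i) $\mathcal{P}f_k=0$, (ii) $f_k$ is an entire function of $t$, and (iii) the family $\{f_k\}_{k\in\textbf{Nar}}$ is a basis for the solution space of the irreducible component of $\mathcal{P}$.

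For (i) I would argue exactly as in the Fano case (Theorem \ref{formal-solutions}), which the author calls ``easily checked.'' Since $D_t$ acts on $t^{dl+k+1}$ by the scalar $dl+k+1$, applying $\mathcal{P}$ term by term gives two monomials per $l$, of exponents $t^{dl+k+1}$ and $t^{d(l+1)+k+1}$. Matching the coefficient of $t^{d(l+1)+k+1}$ reduces $\mathcal{P}f_k=0$ to the two-term recursion
\[
a_{k,l+1}\prod_{c=1}^d\bigl(c-(d(l+1)+k+1)\bigr)=a_{k,l}\prod_{j=1}^N\prod_{c=0}^{w_j-1}\bigl(-q_j(dl+k+1)-c\bigr).
\]
One then checks the explicit coefficients satisfy it, using $\Gamma(x+1)=x\Gamma(x)$ to write $\Gamma(q_j(dl+k+1))=\Gamma(q_j(d(l-1)+k+1))\prod_{c=0}^{w_j-1}(q_j(d(l-1)+k+1)+c)$, together with the fact that $q_jd=w_j\in\mathbb{Z}$, so that $\lfloor q_j(dl+k)\rfloor$ increases by exactly $w_j$ under $l\mapsto l+1$ and the signs $(-1)^{\lfloor\cdots\rfloor}$ and $(-1)^{dl+k+1}$ combine correctly. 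For (ii) the same recursion gives the ratio test directly: the numerator is a product of $\sum_j w_j$ linear factors in $l$ and the denominator a product of $d$ such factors, so $|a_{k,l+1}/a_{k,l}|\sim C\,l^{\sum_j w_j-d}=C\,l^{\K}$ (note $\K=d-\sum_j w_j$). Since $\K>0$ here the ratio tends to $0$ and each $f_k$ converges for all $t$; this is precisely the feature that fails for $\K<0$, where the identical series is only a formal solution.

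The real content is (iii). Linear independence is immediate from the distinct leading exponents $t^{k+1}$, so the task is to identify the irreducible component and to match its order with $|\textbf{Nar}|$. Here I would pass to $q=t^{-d}$, where $\mathcal{P}$ becomes the hypergeometric operator $\prod_j\prod_{c=0}^{w_j-1}(w_jD_q-c)-q\prod_{c=1}^{d}(dD_q+c)$ of order $d$. Its local exponents at $q=\infty$ are $(k+1)/d$ for $k=0,\dots,d-1$, which are pairwise distinct modulo $\mathbb{Z}$; hence there are no logarithmic solutions and the full $d$-dimensional solution space splits as $\bigoplus_{k=0}^{d-1}\mathbb{C}f_k$, one rank-one Frobenius sector per exponent. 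The operator is reducible precisely along those sectors whose exponent $(k+1)/d$ coincides modulo $\mathbb{Z}$ with a parameter $c/w_j$ of the local system at $q=0$, i.e. whenever $(k+1)/d\in\tfrac1{w_j}\mathbb{Z}$, which is exactly the condition $(k+1)w_j\in d\mathbb{Z}$ defining $k\notin\textbf{Nar}$. Splitting off these $d-|\textbf{Nar}|$ reducible sectors leaves the irreducible component $\overline{\mathcal{P}}$, of order $|\textbf{Nar}|$ and with solution space exactly $\bigoplus_{k\in\textbf{Nar}}\mathbb{C}f_k$; since these are $|\textbf{Nar}|=\operatorname{ord}\overline{\mathcal{P}}$ linearly independent solutions, they form a complete set.

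The hard part will be (iii), and specifically the reducibility bookkeeping: making rigorous that the reducible factors of the hypergeometric operator are carried by exactly the non-narrow sectors requires invoking the reducibility theory of generalized hypergeometric operators (the coincidence of the exponent $(k+1)/d$ at $q=\infty$ with a parameter $c/w_j$ at $q=0$), and it is here that the Gorenstein hypothesis $w_j\mid d$ and the definition of $\textbf{Nar}$ enter in an essential way. By contrast, steps (i) and (ii) are the same routine computation used in the Fano case, with only the sign of $\K$ changing the conclusion from a formal to a genuine (entire) solution.
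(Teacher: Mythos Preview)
The paper's own proof of this theorem is the single line ``This is not hard to check,'' so your proposal is considerably more detailed than what the author supplies. Your three-step strategy---verifying the recursion, checking convergence, and counting solutions against the order of the irreducible factor---is the natural way to flesh this out, and each step is essentially sound.

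Two small points are worth flagging. First, in step (ii) you write $|a_{k,l+1}/a_{k,l}|\sim C\,l^{\sum_j w_j-d}=C\,l^{\K}$, but since $\K=d-\sum_j w_j$ the exponent is $-\K$, not $\K$; your conclusion that the ratio tends to $0$ for $\K>0$ is correct precisely because the exponent is negative, so this is only a slip of sign in the display. Second, in step (iii) your argument that the exponents $(k+1)/d$ at $q=\infty$ are pairwise distinct modulo $\mathbb{Z}$ is correct and does preclude logarithms in the $q$-variable, but be slightly careful when you pull back along the $d$-fold cover $q=t^{-d}$: in the $t$-variable the indicial roots at $t=0$ are the integers $1,\dots,d$, so the absence of logarithms is a genuine feature of this particular operator (inherited from the $q$-picture) rather than an automatic consequence of distinct exponents. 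Your identification of the non-narrow sectors with the reducible factors via the Beukers--Heckman-type criterion (coincidence of an exponent $(k+1)/d$ at $\infty$ with one of the parameters $c/w_j$ at $0$ modulo $\mathbb{Z}$) is the right mechanism, and you are right that making this precise is where the Gorenstein hypothesis and the definition of $\textbf{Nar}$ do real work.
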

\begin{proof}
This is not hard to check.
\end{proof}
\begin{cor}
In the general type case, the monodromy matrix at the Landau-Ginzburg point is diagonal.
\end{cor}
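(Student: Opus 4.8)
The plan is to read the monodromy off directly from the explicit basis of solutions furnished by Theorem \ref{solutions-general-type}, so that the whole statement reduces to an elementary eigenvalue count once the conventions are pinned down correctly. First I would fix what ``the monodromy at the Landau-Ginzburg point'' means: in the general type case the Landau-Ginzburg point is $q=\infty$, which under the change of variables $q=t^{-d}$ of Equation (\ref{picard-fuchs-general-type}) corresponds to $t=0$. The monodromy operator $M$ is analytic continuation of a solution once around $q=\infty$, i.e. along the loop $q\mapsto e^{2\pi i}q$. Writing $q=Re^{i\theta}$ with $\theta:0\to 2\pi$ gives $t=q^{-1/d}=R^{-1/d}e^{-i\theta/d}$, so in the variable $t$ this loop is the fractional rotation $t\mapsto e^{-2\pi i/d}t$, \emph{not} the full turn $t\mapsto e^{2\pi i}t$. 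This bookkeeping point is the only genuinely delicate part of the argument: the interesting (non-identity) monodromy is visible precisely because a single loop around $q=\infty$ advances $t$ by only a $d$-th of a turn.

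Next I would extract the scalar solutions. The cohomology-valued series of Theorem \ref{solutions-general-type} has one component for each $k\in\textbf{Nar}$, namely
\[
I_k(t):=\sum_{l=0}^{\infty}\frac{(-t)^{dl+k+1}}{(dl+k)!}\prod_{j=1}^N \frac{(-1)^{\lfloor q_j(dl+k)\rfloor}\Gamma\left(q_j(dl+k+1)\right)}{\Gamma \left(q_j+\langle q_jk\rangle\right)},
\]
and by that theorem the $|\textbf{Nar}|$ functions $\{I_k\}_{k\in\textbf{Nar}}$ form a complete set of solutions to the irreducible component of Equation (\ref{picard-fuchs-general-type}). The structural observation I would record is that every power of $t$ occurring in $I_k$ lies in $k+1+d\mathbb{Z}_{\geq 0}$, so that $I_k(t)=t^{k+1}f_k(t^d)$ for a power series $f_k$ with $f_k(0)\neq 0$; equivalently, in the $q$-variable, $I_k=q^{-(k+1)/d}g_k(q^{-1})$ with $g_k(0)\neq 0$, exhibiting $(k+1)/d$ as the local exponent at $q=\infty$.

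Applying the loop is then a one-line computation: under $t\mapsto e^{-2\pi i/d}t$ each monomial $t^{dl+k+1}$ is scaled by $e^{-2\pi i(dl+k+1)/d}=e^{-2\pi i(k+1)/d}$, the factor $e^{-2\pi i l}=1$ dropping out uniformly in $l$, so that $M\cdot I_k=e^{-2\pi i(k+1)/d}\,I_k$ and each $I_k$ is a genuine eigenvector. To conclude that $M$ is honestly diagonal rather than merely triangular I would rule out logarithmic (resonant) solutions: the local exponents $(k+1)/d$ with $0\le k\le d-1$ satisfy $(k+1)/d-(k'+1)/d\in\mathbb{Z}$ only when $k=k'$, so no two are congruent modulo $\mathbb{Z}$ and the Frobenius method produces no logarithms. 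Hence the solution space is spanned by the pure eigenvectors $\{I_k\}_{k\in\textbf{Nar}}$, and in this basis $M$ is the diagonal matrix with entries $e^{-2\pi i(k+1)/d}$; these eigenvalues are in fact distinct roots of unity, since $k+1$ runs over distinct residues modulo $d$. The main obstacle is conceptual rather than computational, namely the variable-and-orientation bookkeeping of the first paragraph; everything downstream is the elementary eigenvalue count above.
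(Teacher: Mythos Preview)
Your argument is correct and matches the paper's intended reasoning: the corollary is stated immediately after Theorem \ref{solutions-general-type} with no proof, so it is meant to be read off from the explicit basis of solutions, exactly as you do. Your observation that each $I_k$ has the form $t^{k+1}$ times a power series in $t^d$, hence is an eigenvector of the monodromy $t\mapsto e^{-2\pi i/d}t$ with eigenvalue $e^{-2\pi i(k+1)/d}$, is the whole content.

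One small redundancy: the paragraph ruling out logarithmic solutions via non-resonance of the exponents is unnecessary. Theorem \ref{solutions-general-type} already asserts that the $\{I_k\}_{k\in\textbf{Nar}}$ form a \emph{complete} set of solutions, so once you have shown each $I_k$ is an eigenvector you are done; there is no need to separately argue that no Jordan blocks appear. Your careful bookkeeping of the $q$-versus-$t$ loop is a nice clarification of what ``monodromy at the Landau-Ginzburg point'' means here, though the statement (diagonality) is of course insensitive to the orientation choice.
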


\subsection{Matching the $I$-functions via asymptotic expansion} In this section we will find asymptotic expansions for the small $I$-functions. Theorems \ref{fano-genus-zero-correspondence} and \ref{general-type-genus-zero-correspondence} will follow from this.

\subsubsection{Summing formal power series: Borel summation}

The main tool we will use in the proofs of Theorems \ref{fano-genus-zero-correspondence} and \ref{general-type-genus-zero-correspondence} is known as \textit{Borel summation}. Given a divergent power series which is a formal solution of a differential equation at an irregular singular point, Borel summation allows us find some analytic function in a sector of the complex plane, which is a solution to the differential equation, and that has as asymptotic expansion at the irregular point the original divergent series. The idea behind this type of summation is to regularized a divergent series by dividing its coefficients by a Gamma function. This regularized series will be convergent in a disk of finite radius. We may apply a Laplace transformation provided that the regularized series can be analytically continued to infinity along some suitable ray. This procedure will give a new analytic function which is a solution to the original equation. Invoking Watson's lemma, we can show that the asymptotic expansion at infinity of the new function is given by the original divergent series. A simple example of this type of construction can be found in \cite[pp. 246-251]{Miller}.

\subsubsection{The regularized FJRW I-function for the Fano case}
Define the \textit{regularized} FJRW $I$-function as
\begin{equation}\label{regularized-I-function}
I_{\text{FJRW}}^{reg}(\tau):=\sum_{k\in \textbf{Nar}} \sum_{l=0}^{\infty} \frac{\tau^{r(l+\frac{k+1}{d})}}{(dl+k)!}\frac{(-1)^{dl+k+1}}{\Gamma\left(1+r\frac{dl+k+1}{d}\right)}\prod_{j=1}^N \frac{(-1)^{\lfloor q_j(dl+k)\rfloor}\Gamma\left(q_j(dl+k+1)\right)}{\Gamma \left(q_j+\langle q_jk\rangle\right)}\phi_k.
\end{equation}
This series converges absolutely for $|\tau|^r< r^r d^{d}\prod_{j=1}^Nw_j^{-w_j}$, and it is a solution of the \textit{regularized Picard-Fuchs equation}
\begin{equation}\label{regularized Picard-Fuchs equation}
\left[\prod_{j=1}^N\prod_{c=0}^{w_j-1}\left(-\frac{w_j}{r}\tau\frac{d}{d\tau}-c\right)-\tau^{-r}\prod_{c=0}^{r-1}\left(\tau\frac{d}{d\tau}-c\right)\prod_{c=1}^d \left(-\frac{d}{r}\tau\frac{d}{d\tau}+c\right)\right] I_{\text{FJRW}}^{reg}(\tau)=0.
\end{equation}

\begin{lemma}\label{continuation}
The regularized FJRW $I$-function $I_{\text{FJRW}}^{reg}(\tau) $ can be analytically continued to $\tau=\infty$. 
\end{lemma}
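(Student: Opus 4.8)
The plan is to recognize the regularized Picard-Fuchs equation $(\ref{regularized Picard-Fuchs equation})$ as a generalized hypergeometric equation all of whose singular points are regular, and to read off the analytic continuation of $I_{\text{FJRW}}^{reg}$ from the classical theory of such equations. First I would pass to the variable $q=\tau^{-r}$, so that $\tau\to 0$ corresponds to $q\to\infty$ and $\tau\to\infty$ to $q\to 0$; under this substitution $\tau\frac{d}{d\tau}=-rD_q$ with $D_q=q\frac{d}{dq}$. Substituting into $(\ref{regularized Picard-Fuchs equation})$ and simplifying each linear factor, the regularized equation becomes
\[
\left[\prod_{j=1}^N\prod_{c=0}^{w_j-1}(w_jD_q-c)-q\prod_{c=0}^{r-1}(-rD_q-c)\prod_{c=1}^d(dD_q+c)\right]I_{\text{FJRW}}^{reg}=0.
\]
Writing this as $[A(D_q)-qB(D_q)]I_{\text{FJRW}}^{reg}=0$, the crucial feature is that the extra factor $\prod_{c=0}^{r-1}(-rD_q-c)$ produced by Borel regularization raises the degree of the second polynomial from $d$ to $d+r=\sum_j w_j$, so that now $\deg A=\deg B=d+r$. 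This is exactly the balancing that removes the irregular singularity: the original operator of $(\ref{picard-fuchs-fano})$ has mismatched degrees $d+r$ and $d$, hence an irregular singular point at $q=\infty$, whereas the regularized operator is a genuine generalized hypergeometric operator of type ${}_pF_{p-1}$ with $p=d+r$.

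Next I would invoke the classical theory of the generalized hypergeometric equation. An operator $A(D_q)-qB(D_q)$ with $\deg A=\deg B=d+r$ has exactly three singular points, $q=0$, $q=q_0$, and $q=\infty$, all regular singular, where $q_0=\mathrm{lead}(A)/\mathrm{lead}(B)=\prod_j w_j^{w_j}/((-r)^rd^d)$. Its modulus $|q_0|=\prod_j w_j^{w_j}/(r^rd^d)$ is precisely the reciprocal of the radius of convergence recorded after $(\ref{regularized-I-function})$, since $|q|=|\tau|^{-r}$ translates $|\tau|^r<r^rd^d\prod_jw_j^{-w_j}$ into $|q|>|q_0|$. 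The series $I_{\text{FJRW}}^{reg}$, being a power series in $1/q$ convergent for $|q|>|q_0|$, is exactly the solution holomorphic at the regular singular point $q=\infty$ (that is, at $\tau=0$).

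I would then continue this solution from $q=\infty$ to $q=0$. Since the only finite singular point is $q_0$ and it is regular singular, $I_{\text{FJRW}}^{reg}$ admits analytic continuation along any path in $\C\setminus\{0,q_0\}$; choosing a ray from $q=\infty$ to $q=0$ whose direction avoids $q_0$ — equivalently a ray in the $\tau$-plane to $\tau=\infty$ avoiding the $r$ preimages with $\tau^r=q_0^{-1}$ — yields the continuation to $q=0$, i.e. to $\tau=\infty$. Moreover, the indicial roots at $q=0$ are the zeros of $A$, namely $\{c/w_j\}\subset[0,1)$, so the continued function has only mild power-times-logarithm (in particular bounded) growth as $\tau\to\infty$ along the ray; I would record this here, as it is precisely what legitimizes the subsequent Laplace/Watson step and pins down the "suitable sector" of Theorem $(\ref{fano-genus-zero-correspondence})$.

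The step I expect to be the main obstacle is establishing cleanly that $q=\infty$ ($\tau=0$) and $q=0$ ($\tau=\infty$) are regular singular points of the regularized operator, i.e. that the degree-balancing genuinely flattens the Newton polygon rather than merely being formal. While this is immediate from the hypergeometric normal form once the substitution is carried out, one must check that the extra factor $\prod_{c=0}^{r-1}(-rD_q-c)$ neither lowers the order of the operator nor introduces an apparent singularity obstructing continuation; verifying that the coefficient of the top derivative is $q^{d+r}\left(\prod_jw_j^{w_j}-q(-r)^rd^d\right)$, vanishing only at $q=0$ and $q=q_0$, settles this. A secondary technical point is the passage between the single finite singularity $q_0$ in the $q$-plane and its $r$ preimages in the $\tau$-plane, which is what determines exactly which rays qualify as "suitable" in the statement.
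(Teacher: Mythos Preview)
Your approach is essentially the same as the paper's: both arguments observe that the regularized Picard-Fuchs equation has only regular singular points (at $\tau=0$, $\tau=\infty$, and the $r$ values with $(-\tau/r)^r=d^d\prod_jw_j^{-w_j}$), and conclude that $I_{\text{FJRW}}^{reg}$ can be analytically continued to $\tau=\infty$ along any ray avoiding these. Your proof is more explicit---you pass to $q=\tau^{-r}$, identify the operator as a balanced generalized hypergeometric operator, and even record the indicial roots at $q=0$---whereas the paper simply asserts regularity of the singularities and moves on.
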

\begin{proof}
It is not hard to see that the regularized Picard-Fuchs equation has singularities at $\tau=0,\infty$ and for $\tau$ satisfying $(-\tfrac{\tau}{r})^r=d^{d}\prod_{j=1}^Nw_j^{-w_j}$, and one can check that all these singular points are regular. It follows that $I_{\text{FJRW}}^{reg}$ can be analytically continued to $\tau=\infty$ along any ray that avoids these singularities. 
\end{proof}


\subsubsection{Proof of Theorem \ref{fano-genus-zero-correspondence} (Genus zero LG/Fano Correspondence)}

To prove Theorem \ref{fano-genus-zero-correspondence} we need to construct a holomorphic function whose asymptotic expansion is given by the small FJRW $I$-function. Define this function to be a Laplace integral of the regularized FJRW $I$-function:
\begin{equation}
\mathbb{I}_{\text{FJRW}}(u):=u\mathcal{L}(I_{\text{FJRW}}^{reg})(u)=u\int_{0}^{\infty}e^{-u\tau}I_{\text{FJRW}}^{reg}(\tau)d\tau,
\end{equation}
where the ray of integration is any ray that avoids the singular points of Equation (\ref{regularized Picard-Fuchs equation}). It follows that this function is holomorphic for $|\arg(u)|<\pi/r$. As a consequence of Watson's lemma, we have the following relation 
\begin{equation}\label{new_I_asymptotic_expansion}
\mathbb{I}_{\text{FJRW}}(u)\sim \sum_{k\in \textbf{Nar}} \sum_{l=0}^{\infty} \frac{(-1)^{dl+k+1}}{u^{r(l+\frac{k+1}{d})}(dl+k)!}\prod_{j=1}^N \frac{(-1)^{\lfloor q_j(dl+k)\rfloor}\Gamma\left(q_j(dl+k+1)\right)}{\Gamma \left(q_j+\langle q_jk\rangle\right)}\phi_k,
\end{equation}
as $u\rightarrow\infty$ from the region $|\arg(u)|<\pi/r$.

\begin{lemma}\label{new_I_Picard}
The holomorphic function $\mathbb{I}_{\text{FJRW}}$ satisfies the following differential equation:
\[
\left[\prod_{j=1}^N\prod_{c=0}^{w_j-1}\left(\frac{w_j}{r}u\frac{d}{du}-c\right)-u^r\prod_{c=1}^d \left(\frac{d}{r}u\frac{d}{du}+c\right)\right] \mathbb{I}_{\text{FJRW}}(u)=0.
\]
\end{lemma}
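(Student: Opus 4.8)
The plan is to prove the lemma by a Borel--Laplace intertwining argument: I will show that the Laplace transform, together with the prefactor $u$, conjugates the regularized Picard--Fuchs operator of Equation (\ref{regularized Picard-Fuchs equation}) into the operator displayed in the statement. Write $\mathcal{Q}_\tau$ for the operator in (\ref{regularized Picard-Fuchs equation}) and $\mathcal{P}_u$ for the operator in the lemma, and recall $\mathbb{I}_{\text{FJRW}}(u)=u\int_0^\infty e^{-u\tau}I_{\text{FJRW}}^{reg}(\tau)\,d\tau$. The whole proof then reduces to the single operator identity $\mathcal{P}_u\,\mathbb{I}_{\text{FJRW}}(u)=u\,\mathcal{L}\!\left(\mathcal{Q}_\tau I_{\text{FJRW}}^{reg}\right)(u)$. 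Since $I_{\text{FJRW}}^{reg}$ solves $\mathcal{Q}_\tau I_{\text{FJRW}}^{reg}=0$ by the discussion following (\ref{regularized Picard-Fuchs equation}), the right-hand side vanishes and the lemma follows.

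To establish this identity I would isolate two elementary intertwining relations, writing $D_\tau:=\tau\frac{d}{d\tau}$ and $D_u:=u\frac{d}{du}$. First, a direct differentiation of the kernel gives the key relation $D_u\bigl(u e^{-u\tau}\bigr)=(1+D_\tau)\bigl(u e^{-u\tau}\bigr)$; applying $D_u$ under the integral sign, transferring it to the kernel by this relation, and integrating by parts in $\tau$ to move $D_\tau$ onto $I_{\text{FJRW}}^{reg}$ (the boundary terms vanish, see below), one obtains for any polynomial $P$ the rule $P(D_u)\,\mathbb{I}_{\text{FJRW}}(u)=u\,\mathcal{L}\!\left(P(-D_\tau)I_{\text{FJRW}}^{reg}\right)(u)$. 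Applied factor by factor to $\prod_{j}\prod_{c=0}^{w_j-1}(\tfrac{w_j}{r}D_u-c)$, this reproduces exactly the first product $\prod_{j}\prod_{c=0}^{w_j-1}(-\tfrac{w_j}{r}D_\tau-c)$ of $\mathcal{Q}_\tau$. Second, using the classical identity $\tau^{-r}\prod_{c=0}^{r-1}(D_\tau-c)=\frac{d^r}{d\tau^r}$ together with $u^r\mathcal{L}(h)=\mathcal{L}(\frac{d^r}{d\tau^r}h)$, the factor $u^r$ in $\mathcal{P}_u$ turns into precisely the factor $\tau^{-r}\prod_{c=0}^{r-1}(D_\tau-c)$ of $\mathcal{Q}_\tau$, while the remaining product $\prod_{c=1}^{d}(\tfrac{d}{r}D_u+c)$ becomes $\prod_{c=1}^{d}(-\tfrac{d}{r}D_\tau+c)$ by the first rule. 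Matching the two pieces (noting $u^r$ commutes with the outer prefactor $u$) yields the desired identity.

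The main obstacle is rigor rather than algebra, and it concentrates in the analytic justification of these two rules. Interchanging the differential operator $\mathcal{P}_u$ with the Laplace integral, and the vanishing of the boundary terms in the integrations by parts, rely on $I_{\text{FJRW}}^{reg}$ extending analytically to $\tau=\infty$ with at most polynomial growth along the ray of integration; this is where I would invoke Lemma \ref{continuation} and the regularity of all finite singular points of (\ref{regularized Picard-Fuchs equation}), which forces Fuchsian (hence at worst polynomial-times-log) growth and guarantees that the $e^{-u\tau}$ factor dominates for $u$ in the sector $|\arg(u)|<\pi/r$. At $\tau=0$ every exponent $r(l+\frac{k+1}{d})$ of $I_{\text{FJRW}}^{reg}$ is strictly positive, so the boundary contributions for the first rule vanish. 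The delicate point is the second rule when an exponent $r(l+\frac{k+1}{d})$ is smaller than $r-1$: there $\frac{d^r}{d\tau^r}$ produces a non-integrable power $\tau^{\mu-r}$ at the origin, so $\mathcal{L}(\frac{d^r}{d\tau^r}h)$ must be read as the analytic continuation (finite part) of the convergent object $u^r\mathcal{L}(h)$. I would reconcile this by verifying the identity termwise on the Borel-regularized series, where the coefficients $b_{k,l}$ of (\ref{new_I_asymptotic_expansion}) satisfy exactly the indicial recursion forced by $\mathcal{P}_u$ (the same computation underlying Theorem \ref{formal-solutions}), and then upgrading from the formal solution to the genuine one using the uniqueness of the Borel sum furnished by Watson's Lemma. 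This gives $\mathcal{P}_u\mathbb{I}_{\text{FJRW}}=0$ on the sector, hence wherever the function is defined.
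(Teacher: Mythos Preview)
Your approach is essentially the same as the paper's: both rewrite the regularized operator using $\tau^{-r}\prod_{c=0}^{r-1}(D_\tau-c)=\tfrac{d^r}{d\tau^r}$ and then use the two Laplace intertwining rules $u\mathcal{L}(\alpha D_\tau f+\beta f)=(-\alpha D_u+\beta)\bigl(u\mathcal{L}f\bigr)$ and $\mathcal{L}\bigl(\tfrac{d^r}{d\tau^r}h\bigr)=u^r\mathcal{L}(h)$ to transport $\mathcal{Q}_\tau$ into $\mathcal{P}_u$. The paper states the first rule as a lemma and simply says ``applying iteratively''; you spell out both rules and, in addition, flag the analytic points (growth at infinity via the Fuchsian structure from Lemma~\ref{continuation}, vanishing of boundary terms at $\tau=0$, and the finite-part interpretation when the leading exponent $r\tfrac{k+1}{d}$ is below $r-1$) that the paper leaves implicit, so your write-up is if anything more careful than the original.
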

\begin{proof}
Let $f(\tau)$ be holomorphic in some region of the complex plane containing a ray on which we can define the Laplace transform of $f$. Moreover, let $f(0)=0$ and define $F(u):=u\int_{0}^{\infty} e^{-u\tau}f(\tau)d\tau$. Using the properties of the Laplace transform is easily seen that 
\begin{equation}\label{Laplace_property}
\begin{split}
u\mathcal{L}\left(\alpha \tau\frac{d}{d\tau}f(\tau)+\beta f(\tau)\right)&=-\alpha u \frac{d}{du} \left(\mathcal{L}\left(\frac{d}{d\tau}f(\tau)\right)\right)+\beta u\mathcal{L}\left(f(\tau)\right)\\
&=-\alpha u \frac{d}{du} \left(u\mathcal{L}(f(\tau))-f(0)\right)+\beta u\mathcal{L}\left(f(\tau)\right)\\
&=\left(-\alpha u \frac{d}{du} +\beta\right)F(u),
\end{split}
\end{equation}
where $\alpha$ and $\beta$ are arbitrary complex numbers.

Now, Equation (\ref{regularized Picard-Fuchs equation}) can be rewritten as
\[
\left[\prod_{j=1}^N\prod_{c=0}^{w_j-1}\left(-\frac{w_j}{r}\tau\frac{d}{d\tau}-c\right)-\left(\frac{d}{d\tau}\right)^r\prod_{c=1}^d \left(-\frac{d}{r}\tau\frac{d}{d\tau}+c\right)\right] I_{\text{FJRW}}^{reg}(\tau)=0.
\]
Applying Equation (\ref{Laplace_property}) iteratively to this yields the desired result. 
\end{proof}
\begin{cor}\label{new_I_Picard_cor}
The holomorphic function $\mathbb{I}_{\text{FJRW}}(u=q^{1/r})$ satisfies the Picard-Fuchs equation of $\mathcal{X}_W$, i.e.
\[
\left[\prod_{j=1}^N\prod_{c=0}^{w_j-1}\left(w_jq\frac{d}{dq}-c\right)-q\prod_{c=1}^d \left(dq\frac{d}{dq}+c\right)\right] \mathbb{I}_{\text{FJRW}}(u=q^{1/r})=0.
\]
\end{cor}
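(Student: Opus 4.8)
The plan is to deduce the corollary directly from Lemma \ref{new_I_Picard} by means of the change of variables $q = u^r$ (equivalently $u = q^{1/r}$). Since Lemma \ref{new_I_Picard} already establishes that $\mathbb{I}_{\text{FJRW}}(u)$ is annihilated by the operator
\[
\prod_{j=1}^N\prod_{c=0}^{w_j-1}\left(\frac{w_j}{r}u\frac{d}{du}-c\right)-u^r\prod_{c=1}^d \left(\frac{d}{r}u\frac{d}{du}+c\right),
\]
it suffices to check that this operator becomes the Picard-Fuchs operator of $\mathcal{X}_W$ upon substituting $q = u^r$.

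The single computation to carry out is the transformation of the Euler operator $D_q := q\frac{d}{dq}$ under this substitution. Writing $\log q = r\log u$, the chain rule gives $\frac{d}{d\log q} = \frac{1}{r}\frac{d}{d\log u}$, that is,
\[
q\frac{d}{dq} = \frac{1}{r}u\frac{d}{du}.
\]
Consequently $w_j\, q\frac{d}{dq} = \frac{w_j}{r}u\frac{d}{du}$ and $d\,q\frac{d}{dq} = \frac{d}{r}u\frac{d}{du}$, while the monomial prefactor transforms as $q = u^r$.

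Substituting these identities into the Picard-Fuchs operator of $\mathcal{X}_W$ appearing in the corollary, each factor $\left(w_j\, q\frac{d}{dq}-c\right)$ becomes $\left(\frac{w_j}{r}u\frac{d}{du}-c\right)$, each factor $\left(d\,q\frac{d}{dq}+c\right)$ becomes $\left(\frac{d}{r}u\frac{d}{du}+c\right)$, and the prefactor $q$ becomes $u^r$. The resulting operator is precisely the one appearing in Lemma \ref{new_I_Picard}, which annihilates $\mathbb{I}_{\text{FJRW}}(u)$; since $\mathbb{I}_{\text{FJRW}}(u=q^{1/r})$ is by definition the same function re-expressed in the variable $q$, the corollary follows. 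There is no genuine obstacle here: the only point requiring care is the scaling $q\frac{d}{dq} = \frac{1}{r}u\frac{d}{du}$ of the logarithmic derivative, which is exactly what forces the factors of $1/r$ and the power $u^r$ in Lemma \ref{new_I_Picard} to collapse back into the unnormalized Picard-Fuchs operator.
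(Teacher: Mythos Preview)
Your proof is correct and follows exactly the same approach as the paper, which simply states that the result follows from Lemma \ref{new_I_Picard} after the change of variables $q=u^r$. You have merely spelled out the elementary chain-rule computation $q\frac{d}{dq}=\frac{1}{r}u\frac{d}{du}$ that the paper leaves implicit.
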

\begin{proof}
This easily follows from Lemma \ref{new_I_Picard} after the change of variables $q=u^r$.
\end{proof}
We now have all the ingredients we need for the proof of Theorem \ref{fano-genus-zero-correspondence}. Corollary \ref{new_I_Picard_cor} implies that  $\mathbb{I}_{\text{FJRW}}(u=q^{1/r})$ is  a holomorphic solution to the Picard-Fuchs equation of $\mathcal{X}_W$ at $q=0$. Since $I_{\text{GW}}^{small}(q,1)$ is a complete set of solutions of the irreducible component of the Picard-Fuchs equation at $q=0$, there must exist a unique linear operator $L_{\text{GW}}:H_{\text{CR}}^{amb}(\mathcal{X}_W;\C)\longrightarrow H_{\text{FJRW}}^{nar}\left(W\right)$ such that $L_{\text{GW}}\cdot I_{GW}^{small}(q,1)=\mathbb{I}_{\text{FJRW}}(u=q^{1/r})$. It now follows from Equation (\ref{new_I_asymptotic_expansion}) that
\[
\begin{split}
L_{\text{GW}}\cdot I_{GW}^{small}(q,1)&=\mathbb{I}_{\text{FJRW}}(u=q^{1/r})\\
&\sim \sum_{k\in \textbf{Nar}} \sum_{l=0}^{\infty} \frac{(-1)^{dl+k+1}}{q^{(l+\frac{k+1}{d})}(dl+k)!}\prod_{j=1}^N \frac{(-1)^{\lfloor q_j(dl+k)\rfloor}\Gamma\left(q_j(dl+k+1)\right)}{\Gamma \left(q_j+\langle q_jk\rangle\right)}\phi_k \\ 
&=I_{\text{FJRW}}^{small}(t=q^{-1/d},-1),
\end{split}
\]
as $q\rightarrow\infty$ from some suitable sector. This completes the proof of the theorem.
\begin{flushright}
$\square$
\end{flushright}
\begin{cor}[Corollary \ref{corollary1.4}]\label{invariants_corollary}
In the Fano case $(i.e.\text{ } \K<0)$, the genus zero FJRW invariants are completely determined by the genus zero Gromov-Witten invariants of $\mathcal{X}_W$.
\end{cor}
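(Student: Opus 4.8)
The genus zero narrow FJRW invariants are all encoded in the FJRW $J$-function $J_{\text{FJRW}}$, so it suffices to show that $J_{\text{FJRW}}$ can be reconstructed from the genus zero Gromov--Witten invariants of $\mathcal{X}_W$. The plan is to reverse the chain of constructions of Section \ref{quantum-theory}, realizing the corollary as the composite of determinations
\[
\{\text{genus zero GW invariants}\}\longrightarrow I_{\text{GW}}^{small}(q,1)\longrightarrow I_{\text{FJRW}}^{small}(t,-1)\longrightarrow I_{\text{FJRW}}^{small}(t,z)\longrightarrow I_{\text{FJRW}}(\textbf{t},z)\longrightarrow J_{\text{FJRW}},
\]
and to check that each arrow is forced by a result already proved above.

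Reading the chain from the ends inward: on the Gromov--Witten side, $I_{\text{GW}}^{small}(q,z)$ and the genus zero GW invariants determine one another through the Givental mirror theorem (which produces the GW $J$-function, and hence the invariants, from $I_{\text{GW}}^{small}$), and $I_{\text{GW}}^{small}(q,z)$ is in turn recovered from $I_{\text{GW}}^{small}(q,1)$ by the homogeneity argument of Section \ref{section_z=1}. On the FJRW side the three rightmost arrows are supplied directly: Equation (\ref{z=1}) recovers $I_{\text{FJRW}}^{small}(t,z)$ from $I_{\text{FJRW}}^{small}(t,1)$ via the grading operator $\textbf{Gr}$; Equation (\ref{big-small}) recovers the big $I$-function $I_{\text{FJRW}}(\textbf{t},z)$ from the small one by applying the operators $A$, $\Gamma$, $T$ and the attached hypergeometric differential operator; and Theorem \ref{FJRW-mirror} recovers $J_{\text{FJRW}}$ from $I_{\text{FJRW}}(\textbf{t},z)$ through relation (\ref{mirror-equation}). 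Since $J_{\text{FJRW}}$ encodes all narrow genus zero FJRW invariants, only the central arrow remains.

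For the central arrow I invoke Theorem \ref{fano-genus-zero-correspondence} together with the uniqueness of asymptotic expansions. That theorem furnishes a linear map $L_{\text{GW}}$ with the asymptotic relation
\[
L_{\text{GW}}\cdot I_{\text{GW}}^{small}(q,1)\sim I_{\text{FJRW}}^{small}(t=q^{-1/d},-1)\quad\text{as }q\to\infty.
\]
Because an asymptotic expansion relative to a fixed asymptotic sequence---here the power-series sequence $\{q^{-(l+(k+1)/d)}\}$---is unique (item (3) in the list following the definition of asymptotic expansion), the formal series $I_{\text{FJRW}}^{small}(t=q^{-1/d},-1)$ is uniquely read off from $L_{\text{GW}}\cdot I_{\text{GW}}^{small}(q,1)$, and undoing the substitution $t=q^{-1/d}$ yields $I_{\text{FJRW}}^{small}(t,-1)$.

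The step I expect to be the main obstacle is ruling out circularity: $L_{\text{GW}}$ is constructed in the proof of Theorem \ref{fano-genus-zero-correspondence} using the FJRW-built function $\mathbb{I}_{\text{FJRW}}$, so I must argue that $L_{\text{GW}}$ is in fact intrinsic to $\mathcal{X}_W$ and requires no independent FJRW input. The resolution is that, by Corollary \ref{new_I_Picard_cor} and Lemma \ref{continuation}, $\mathbb{I}_{\text{FJRW}}(u=q^{1/r})$ is the distinguished Borel--Laplace solution of the Picard--Fuchs equation of $\mathcal{X}_W$ singled out, in a fixed sector, by having a \emph{pure power-series} asymptotic expansion at the irregular point $q=\infty$ (no exponential ``massive-vacua'' terms); such recessive solutions are intrinsic invariants of the equation, so expressing them in the basis $I_{\text{GW}}^{small}(q,1)$ of solutions at $q=0$ is exactly a connection-matrix computation for the Picard--Fuchs equation of $\mathcal{X}_W$. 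Thus $L_{\text{GW}}$ depends only on $\mathcal{X}_W$, and $I_{\text{FJRW}}^{small}$ is manufactured entirely from Gromov--Witten data. Two further checks, both already available, close the argument: the full-rank clause of Theorem \ref{fano-genus-zero-correspondence} guarantees that every narrow sector is reached by $L_{\text{GW}}$, and the invertibility of the mirror map $\boldsymbol{\tau}(\textbf{t})$ in (\ref{mirror-equation}) guarantees that $J_{\text{FJRW}}$ genuinely recovers the invariants.
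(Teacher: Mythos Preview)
Your proof is correct and follows essentially the same chain of implications as the paper's own argument: GW invariants $\to I_{\text{GW}}^{small}(q,1)\to I_{\text{FJRW}}^{small}(t,-1)\to I_{\text{FJRW}}^{small}(t,z)\to I_{\text{FJRW}}(\textbf{t},z)\to J_{\text{FJRW}}\to$ FJRW invariants, invoking Theorem~\ref{fano-genus-zero-correspondence}, Section~\ref{section_z=1}, Equation~(\ref{big-small}), and Theorem~\ref{FJRW-mirror} in turn. Your added paragraph on the potential circularity of $L_{\text{GW}}$ is a thoughtful elaboration that the paper leaves implicit, but it is not needed for the argument since $L_{\text{GW}}$ (equivalently, $\mathbb{I}_{\text{FJRW}}$ and the regularized $I$-function) is built from explicit hypergeometric data attached to $(d,w_1,\dots,w_N)$ rather than from FJRW invariants.
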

\begin{proof}
The genus zero Gromov-Witten invariants of  $\mathcal{X}_W$ completely determine $I_{\text{GW}}^{small}(q,1)$. It follows from Theorem \ref{fano-genus-zero-correspondence} that $I_{\text{GW}}^{small}(q,1)$ completely determines $I_{\text{FJRW}}^{small}(t,-1)$. Using the procedure outlined in Section \ref{section_z=1} we can recover $I_{\text{FJRW}}^{small}(t,z)$. By using Equation (\ref{big-small}) we can obtain the big $I$-function $I_{\text{FJRW}}(\textbf{t},z)$ from the small FJRW $I$-function. The big $I$-function completely determines the FJRW $J$-function by means of the Mirror Theorem. The $J$-function completely determines the FJRW Lagrangian cone and, therefore, all genus zero FJRW invariants.
\end{proof}

\subsubsection{The regularized $GW$ $I$-function for the general type case} 
Define the \textit{regularized} GW $I$-function to be
\begin{equation}\label{regularized-GW-I-function}
I^{reg}_{\text{GW}}(\tau):=\sum_{f\in F}\mathbf{1}_f \sum_{n=0}^{\infty} \frac{\tau^{\K(n+\overline{f}+P)}}{\Gamma(1+\K(n+\overline{f}+P))}
\frac{\Gamma(1+d(n+\overline{f}+P))}{\Gamma(1+d(\overline{f}+P))}
\prod_{j=1}^N\frac{\Gamma(1+w_j(\overline{f}+P))}{\Gamma(1+w_j(n+\overline{f}+P))},
\end{equation}
where $\overline{f}:=\langle 1-f\rangle$, for $f\in F$.
This series converges absolutely for $|\tau|^{\K}< \K^{\K} d^{-d}\prod_{j=1}^Nw_j^{w_j}$, and it is a solution of the \textit{regularized Picard-Fuchs equation}
\begin{equation}\label{regularized Picard-Fuchs equation gt}
\left[\prod_{c=0}^{\K-1}\left(\tau\frac{d}{d\tau}-c\right) \prod_{j=1}^N\prod_{c=0}^{w_j-1}\left(\frac{w_j}{\K}\tau\frac{d}{d\tau}-c\right)-\tau^{\K}\prod_{c=1}^d \left(\frac{d}{\K}\tau\frac{d}{d\tau}+c\right)\right] I^{reg}_{\text{GW}}(\tau)=0.
\end{equation}
\begin{lemma}\label{GW_continuation}
The regularized GW $I$-function $I_{\text{GW}}^{reg}(\tau) $ can be analytically continued to $\tau=\infty$. 
\end{lemma}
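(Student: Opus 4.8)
The plan is to mirror the argument already used for Lemma \ref{continuation} in the Fano case. Since $I_{\text{GW}}^{reg}(\tau)$ is, by construction, a solution of the linear ordinary differential equation (\ref{regularized Picard-Fuchs equation gt}), the question of analytic continuation reduces entirely to locating and classifying the singular points of this equation: a solution holomorphic near an ordinary point continues analytically along any path avoiding the singularities, and at a regular singular point it grows at most polynomially-logarithmically. Thus it suffices to show that (\ref{regularized Picard-Fuchs equation gt}) is Fuchsian, i.e.\ that every one of its singular points is regular singular.

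First I would rewrite the operator in terms of $\theta := \tau\frac{d}{d\tau}$, giving it the shape $P(\theta) - \tau^{\K} Q(\theta)$ with $P(\theta) = \prod_{c=0}^{\K-1}(\theta - c)\prod_{j=1}^N\prod_{c=0}^{w_j-1}(\tfrac{w_j}{\K}\theta - c)$ and $Q(\theta) = \prod_{c=1}^d(\tfrac{d}{\K}\theta + c)$. Both $P$ and $Q$ have degree $d$ in $\theta$ (using $\K + \sum_j w_j = d$), so the equation has order $d$, and since $\theta^d = \tau^d(\frac{d}{d\tau})^d + (\text{lower order})$ the coefficient of $(\frac{d}{d\tau})^d$ is $\tau^d\big(\prod_j(\tfrac{w_j}{\K})^{w_j} - (\tfrac{d}{\K})^d\tau^{\K}\big)$. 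Reading off its zeros, the singular points are $\tau = 0$, $\tau = \infty$, and the $\K$ finite roots of $\tau^{\K} = \K^{\K}d^{-d}\prod_{j=1}^N w_j^{w_j}$ --- precisely the circle bounding the disk of convergence of (\ref{regularized-GW-I-function}).

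Next I would verify regularity at each point. Because the operator is a finite sum $\sum_m \tau^m p_m(\theta)$ with polynomial $p_m$ (here only $m=0$ and $m=\K$ occur), the point $\tau = 0$ is automatically regular singular, and after the substitution $w = 1/\tau$, $\theta \mapsto -\theta$ the operator again takes this same shape, so $\tau = \infty$ is regular singular as well. At each of the $\K$ finite nonzero roots the leading coefficient $\tau^d\big(\prod_j(\tfrac{w_j}{\K})^{w_j} - (\tfrac{d}{\K})^d\tau^{\K}\big)$ has a simple zero while all lower-order coefficients stay holomorphic there, whence each such point is regular singular. Hence (\ref{regularized Picard-Fuchs equation gt}) is Fuchsian.

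Finally, invoking the monodromy theorem for Fuchsian equations, the solution $I_{\text{GW}}^{reg}(\tau)$, holomorphic in the disk about $\tau = 0$, extends by analytic continuation along any ray from the origin to infinity that avoids the $\K$ finite singular points; this is exactly the continuation to $\tau = \infty$ asserted, and it furnishes the integrand required for the Laplace-transform construction in the general type case. The only genuinely case-specific ingredient is the classification of the $\K$ finite singularities, but as they are simple zeros of the leading coefficient this is routine; everything else runs parallel to the Fano computation in Lemma \ref{continuation}.
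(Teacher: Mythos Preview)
Your proposal is correct and follows essentially the same approach as the paper's own proof: locate the singular points of the regularized Picard--Fuchs equation (\ref{regularized Picard-Fuchs equation gt}) at $\tau=0$, $\tau=\infty$, and the finite roots of $(\tau/\K)^{\K}=d^{-d}\prod_j w_j^{w_j}$, verify they are all regular, and conclude that analytic continuation along a ray avoiding these points reaches $\tau=\infty$. You have simply filled in the details that the paper leaves to the reader.
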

\begin{proof}
It is not hard to see that the regularized Picard-Fuchs equation has singularities at $\tau=0,\infty$ and for $\tau$ satisfying $(\tfrac{\tau}{\K})^{\K}=d^{-d}\prod_{j=1}^Nw_j^{w_j}$, and one can check that all these singular points are regular. It follows that $I_{\text{GW}}^{reg}$ can be analytically continued to $\tau=\infty$ along any ray that avoids these singularities. 
\end{proof}

\subsubsection{Proof of Theorem \ref{general-type-genus-zero-correspondence} (Genus zero LG/General type Correspondence)}
The proof of Theorem \ref{general-type-genus-zero-correspondence} is similar to the proof of the LG/Fano correspondence. We need to construct a holomorphic function whose asymptotic expansion is given by the small GW $I$-function. Define this function to be a Laplace integral of the regularized GW $I$-function:
\begin{equation}
\mathbb{I}_{\text{GW}}(u):=u\mathcal{L}(I_{\text{GW}}^{reg})(u)=u\int_{0}^{\infty}e^{-u\tau}I_{\text{GW}}^{reg}(\tau)d\tau,
\end{equation}
where the ray of integration is any ray that avoids the singular points of Equation (\ref{regularized Picard-Fuchs equation gt}). It follows that this function is holomorphic for $|\arg(u)|<\text{min}(\pi/\K,\pi/2)$. As a consequence of Watson's lemma, we have the following relation 
\begin{equation}\label{new_I_asymptotic_expansion_GW}
\mathbb{I}_{\text{GW}}(u)\sim \sum_{f\in F}\mathbf{1}_f \sum_{n=0}^{\infty} \frac{1}{u^{\K(n+\overline{f}+P)}}
\frac{\Gamma(1+d(n+\overline{f}+P))}{\Gamma(1+d(\overline{f}+P))}
\prod_{j=1}^N\frac{\Gamma(1+w_j(\overline{f}+P))}{\Gamma(1+w_j(n+\overline{f}+P))},
\end{equation}
as $u\rightarrow\infty$ from the region $|\arg(u)|<\text{min}(\pi/\K,\pi/2)$.

\begin{lemma}\label{new_I_Picard_GW}
The holomorphic function $\mathbb{I}_{\text{GW}}$ satisfies the following differential equation:
\[
\left[u^{\K}\prod_{j=1}^N\prod_{c=0}^{w_j-1}\left(-\frac{w_j}{\K}u\frac{d}{du}-c\right)-\prod_{c=1}^d \left(-\frac{d}{\K}u\frac{d}{du}+c\right)\right] \mathbb{I}_{\text{GW}}(u)=0.
\]
\end{lemma}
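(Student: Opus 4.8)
The plan is to follow the strategy of Lemma~\ref{new_I_Picard} from the Fano case almost verbatim, the only genuinely new feature being the presence of the nilpotent hyperplane class $P$ in the exponents of $I^{reg}_{\text{GW}}$. I start from the regularized Picard--Fuchs equation (\ref{regularized Picard-Fuchs equation gt}) and first put it in a form from which $\mathbb{I}_{\text{GW}}=u\mathcal{L}(I^{reg}_{\text{GW}})$ can be read off through the transform rule (\ref{Laplace_property}).

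Step one: apply the classical Euler identity $\prod_{c=0}^{\K-1}\!\left(\tau\frac{d}{d\tau}-c\right)=\tau^{\K}\left(\frac{d}{d\tau}\right)^{\K}$ to the first summand of (\ref{regularized Picard-Fuchs equation gt}). Both summands then share a common left factor $\tau^{\K}$, which may be cancelled (if $\tau^{\K}X=\tau^{\K}Y$ as analytic functions then $X=Y$ for $\tau\neq0$, hence everywhere by continuity), leaving
\[
\left(\frac{d}{d\tau}\right)^{\K}\prod_{j=1}^N\prod_{c=0}^{w_j-1}\left(\frac{w_j}{\K}\tau\frac{d}{d\tau}-c\right)I^{reg}_{\text{GW}}=\prod_{c=1}^d\left(\frac{d}{\K}\tau\frac{d}{d\tau}+c\right)I^{reg}_{\text{GW}}.
\]

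Step two: apply $u\mathcal{L}(\cdot)$ to both sides. On the right, iterating (\ref{Laplace_property}) (which converts each $\tau\frac{d}{d\tau}$ into $-u\frac{d}{du}$) yields $\prod_{c=1}^d\left(-\frac{d}{\K}u\frac{d}{du}+c\right)\mathbb{I}_{\text{GW}}$. On the left, setting $h:=\prod_{j}\prod_{c}\left(\frac{w_j}{\K}\tau\frac{d}{d\tau}-c\right)I^{reg}_{\text{GW}}$, the same iteration gives $u\mathcal{L}(h)=\prod_{j}\prod_{c}\left(-\frac{w_j}{\K}u\frac{d}{du}-c\right)\mathbb{I}_{\text{GW}}$, and the elementary transform rule $\mathcal{L}(h^{(\K)})=u^{\K}\mathcal{L}(h)$ promotes this to $u^{\K}\prod_{j}\prod_{c}\left(-\frac{w_j}{\K}u\frac{d}{du}-c\right)\mathbb{I}_{\text{GW}}$. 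Equating the two sides is exactly the asserted equation.

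The main obstacle is justifying $\mathcal{L}(h^{(\K)})=u^{\K}\mathcal{L}(h)$, i.e. showing that $h$ vanishes to order $\K$ at $\tau=0$ so that no boundary polynomial in $u$ survives the integrations by parts; this is where the nilpotency of $P$ enters. The operator $\prod_j\prod_c\left(\frac{w_j}{\K}\tau\frac{d}{d\tau}-c\right)$ acts diagonally on the monomials $\tau^{\K(n+\overline f+P)}$ of (\ref{regularized-GW-I-function}), so the exponents are unchanged; the terms with $n\geq1$ already have leading real power $\geq\K$ and are harmless, and only the $n=0$ block can obstruct. On that block the operator produces, for each $f\in F$, the coefficient $\prod_{j=1}^N\prod_{c=0}^{w_j-1}\bigl(w_j(\overline f+P)-c\bigr)\mathbf{1}_f$. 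For every $j$ with $w_j\overline f\in\Z$ (equivalently $w_jf\in\Z$) the factor with $c=w_j\overline f$ collapses to $w_jP$, so this coefficient is a nonzero multiple of $P^{m}\mathbf{1}_f$ with $m=\#\{j:w_jf\in\Z\}=N_{\bar\lambda}$, the dimension of $\C^N_{\bar\lambda}$ for $\lambda=e^{2\pi if}$. Since this sector sits in $\mathbb{P}(\boldsymbol w_{\bar\lambda})$, which has dimension $N_{\bar\lambda}-1$, we have $P^{N_{\bar\lambda}}\mathbf{1}_f=0$; hence the entire $n=0$ block is annihilated, $h$ has only exponents of real part $\geq\K$, and the required order of vanishing holds. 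This is precisely the nilpotency responsible for the collapsing of the state spaces, and once it is in place the remaining manipulations are the routine transform bookkeeping already carried out in Lemma~\ref{new_I_Picard}.
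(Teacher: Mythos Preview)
Your proof is correct and follows the same Laplace-transform approach the paper intends; the paper's own proof reads in full ``The proof is identical to that of Lemma~\ref{new_I_Picard}.'' You go beyond the paper by explicitly justifying that the boundary terms in $\mathcal{L}(h^{(\K)})=u^{\K}\mathcal{L}(h)$ vanish, using that the operator $\prod_{j}\prod_{c}\bigl(\tfrac{w_j}{\K}\tau\tfrac{d}{d\tau}-c\bigr)$ kills the $n=0$ block of $I^{reg}_{\text{GW}}$ via $P^{N_{\bar\lambda}}\mathbf{1}_f=0$; this is a genuine detail that the paper suppresses in both the Fano and general-type lemmas.
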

\begin{proof}
The proof is identical to that of Lemma \ref{new_I_Picard}.
\end{proof}
\begin{cor}\label{new_I_Picard_cor_GW}
The holomorphic function $\mathbb{I}_{\text{GW}}(u=t^{d/{\K}})$ satisfies the following Picard-Fuchs equation:
\[
\left[t^{d}\prod_{j=1}^N\prod_{c=0}^{w_j-1}\left(-\frac{w_j}{d}t\frac{d}{dt}-c\right)-\prod_{c=1}^d \left(-t\frac{d}{dt}+c\right)\right] \mathbb{I}_{\text{GW}}(u=t^{d/\K})=0.
\]
\end{cor}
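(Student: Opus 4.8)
The plan is to deduce the corollary directly from Lemma~\ref{new_I_Picard_GW} by performing the change of variables $u=t^{d/\K}$, exactly in parallel with the Fano case treated in Corollary~\ref{new_I_Picard_cor}. The whole content of the argument is the observation that, since $u=t^{d/\K}$ is a pure power law, the substitution interacts cleanly with the Euler (logarithmic) derivative, and so each factor of the operator in Lemma~\ref{new_I_Picard_GW} transforms independently.

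First I would record how the Euler operator transforms. Writing $\log u=\tfrac{d}{\K}\log t$ and applying the chain rule gives
\[
u\frac{d}{du}=\frac{d}{d(\log u)}=\frac{\K}{d}\,\frac{d}{d(\log t)}=\frac{\K}{d}\,t\frac{d}{dt}.
\]
Because every factor in the operator of Lemma~\ref{new_I_Picard_GW} is a polynomial in the single operator $u\tfrac{d}{du}$, these factors commute with one another, so the substitution can be carried out factor by factor with no ambiguity about ordering. Next I would substitute into each piece. The leading monomial becomes $u^{\K}=(t^{d/\K})^{\K}=t^{d}$. In the first product each factor transforms as
\[
-\frac{w_j}{\K}\,u\frac{d}{du}-c=-\frac{w_j}{\K}\cdot\frac{\K}{d}\,t\frac{d}{dt}-c=-\frac{w_j}{d}\,t\frac{d}{dt}-c,
\]
while in the second product each factor transforms as
\[
-\frac{d}{\K}\,u\frac{d}{du}+c=-\frac{d}{\K}\cdot\frac{\K}{d}\,t\frac{d}{dt}+c=-t\frac{d}{dt}+c.
\]
Since the multiplication operator $u^{\K}$ sits to the left of the first product and becomes multiplication by $t^{d}$, the ordering of the monomial relative to the differential factors is preserved under the substitution. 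Assembling these, the operator annihilating $\mathbb{I}_{\text{GW}}$ becomes precisely
\[
t^{d}\prod_{j=1}^N\prod_{c=0}^{w_j-1}\left(-\frac{w_j}{d}\,t\frac{d}{dt}-c\right)-\prod_{c=1}^d\left(-t\frac{d}{dt}+c\right),
\]
which, recalling $q_j=w_j/d$, is exactly the general-type Picard-Fuchs operator of Equation~(\ref{picard-fuchs-general-type}) applied to $\mathbb{I}_{\text{GW}}(u=t^{d/\K})$. This simultaneously confirms the claim and identifies $\mathbb{I}_{\text{GW}}(u=t^{d/\K})$ as a holomorphic solution of the same equation solved formally by $I_{\text{FJRW}}^{small}$ in Theorem~\ref{solutions-general-type}, which is what will later enable the matching of $I$-functions.

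I do not anticipate a genuine obstacle: the deduction is a one-line change of variables. The only point that requires any care is verifying the compatibility of the substitution with the ordering of the left multiplier $u^{\K}$ and the differential factors, and this holds automatically because $u^{\K}$ becomes the left multiplier $t^{d}$ while $u\tfrac{d}{du}$ is merely rescaled to $\tfrac{\K}{d}\,t\tfrac{d}{dt}$. This completes the proof.
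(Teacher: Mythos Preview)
Your proof is correct and takes essentially the same approach as the paper, which simply records that the corollary follows from Lemma~\ref{new_I_Picard_GW} after the change of variables $t^{d}=u^{\K}$. You have just spelled out the substitution in the Euler operator and the monomial factor in more detail than the paper does.
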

\begin{proof}
This easily follows from Lemma \ref{new_I_Picard_GW} after the change of variables $t^d=u^{\K}$.
\end{proof}
We now have all the ingredients we need for the proof of Theorem \ref{general-type-genus-zero-correspondence}. Corollary \ref{new_I_Picard_cor_GW} implies that  $\mathbb{I}_{\text{GW}}(u=t^{d/\K})$ is  a holomorphic solution to the Picard-Fuchs Equation (\ref{picard-fuchs-general-type}) at $t=0$. Since $I^{small}_{\text{FJRW}}(t,-1)$ is a complete set of solutions of the irreducible component of this Picard-Fuchs equation at $t=0$, there must exist a unique linear operator $L_{\text{FJRW}}:H_{\text{FJRW}}^{nar}\left(W\right)\longrightarrow H_{\text{CR}}^{amb}(\mathcal{X}_W;\C)$ such that $L_{\text{FJRW}}\cdot I_{FJRW}^{small}(t,-1)=\mathbb{I}_{\text{GW}}(u=t^{d/\K})$. It now follows from Equation (\ref{new_I_asymptotic_expansion_GW}) that
\[
\begin{split}
L_{\text{FJRW}}\cdot I_{FJRW}^{small}(t,-1)&=\mathbb{I}_{\text{GW}}(u=t^{d/\K})\\
&\sim \sum_{f\in F}\mathbf{1}_f \sum_{n=0}^{\infty} \frac{1}{t^{d(n+\overline{f}+P)}}
\frac{\Gamma(1+d(n+\overline{f}+P))}{\Gamma(1+d(\overline{f}+P))}
\prod_{j=1}^N\frac{\Gamma(1+w_j(\overline{f}+P))}{\Gamma(1+w_j(n+\overline{f}+P))} \\ 
&=I_{\text{GW}}^{small}(q=t^{-d},1)
\end{split}
\]
as $t\rightarrow\infty$ from some suitable sector. This completes the proof of the theorem.
\begin{flushright}
$\square$
\end{flushright}
\begin{cor}[Corollary \ref{corollary1.6}]
If $\K>0$, the genus zero GW invariants of $\mathcal{X}_W$ are completely determined by the genus zero FJRW invariants of the pair $\left(W,\langle J_W\rangle\right)$.
\end{cor}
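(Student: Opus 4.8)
The plan is to run the chain of implications of Corollary \ref{invariants_corollary} in reverse, exchanging the roles of the two theories. First I would observe that, exactly as the genus zero Gromov-Witten invariants of $\mathcal{X}_W$ determine $I_{\text{GW}}^{small}(q,1)$ in the Fano case, the genus zero FJRW invariants of the pair $(W,\langle J_W\rangle)$ determine the FJRW $J$-function and hence the entire FJRW Lagrangian cone; since $I_{\text{FJRW}}^{small}(t,z)$ is a slice of the big $I$-function lying on that cone, the invariants in particular determine $I_{\text{FJRW}}^{small}(t,-1)$. The problem is thereby reduced to showing that $I_{\text{FJRW}}^{small}(t,-1)$ determines all (ambient) genus zero Gromov-Witten invariants of $\mathcal{X}_W$.

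Next I would invoke Theorem \ref{general-type-genus-zero-correspondence}. Its proof produces, via the Laplace transform of the regularized Gromov-Witten $I$-function, a concrete holomorphic function $\mathbb{I}_{\text{GW}}(u=t^{d/\K})$ together with a unique linear map $L_{\text{FJRW}}$ satisfying the \emph{exact} identity $L_{\text{FJRW}}\cdot I_{\text{FJRW}}^{small}(t,-1)=\mathbb{I}_{\text{GW}}(u=t^{d/\K})$. Since $I_{\text{FJRW}}^{small}(t,-1)$ is already in hand and $L_{\text{FJRW}}$ is canonically determined, this identity pins down $\mathbb{I}_{\text{GW}}$ as an honest analytic function built entirely from the FJRW data.

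The crucial and genuinely new step---the place where the argument diverges from the Fano corollary---is the recovery of $I_{\text{GW}}^{small}(q=t^{-d},1)$ itself. Unlike the Fano situation, here the Gromov-Witten side is the \emph{formal}, divergent one, so $I_{\text{GW}}^{small}$ cannot be evaluated as a convergent function; it exists only as a formal power series. The point is that this formal series is precisely the asymptotic expansion of the analytic function $\mathbb{I}_{\text{GW}}(u=t^{d/\K})$ as $t\rightarrow\infty$ in the relevant sector, as established in the proof of Theorem \ref{general-type-genus-zero-correspondence}. By the uniqueness of an asymptotic expansion with respect to a fixed asymptotic sequence, the coefficients of $I_{\text{GW}}^{small}$ are read off unambiguously from $\mathbb{I}_{\text{GW}}$, so $I_{\text{GW}}^{small}(q,1)$ is completely determined by the FJRW invariants.

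Finally I would close the loop on the Gromov-Witten side: the formal series $I_{\text{GW}}^{small}(q,1)$ determines the Gromov-Witten $J$-function through the Givental mirror theorem \cite{Givental}---applied formally, which is legitimate since every manipulation is coefficientwise in the degree parameter---and the $J$-function generates the Lagrangian cone and hence encodes all genus zero ambient Gromov-Witten invariants of $\mathcal{X}_W$. I expect the main obstacle to lie precisely in the third step: one must argue carefully that passing to a formal asymptotic expansion loses no information, i.e. that uniqueness of the expansion, combined with the formal mirror theorem, genuinely recovers the full genus zero Gromov-Witten data and not merely the small slice. The remaining steps are formal consequences of the Givental formalism already developed above.
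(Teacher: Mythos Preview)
Your proposal is correct and follows essentially the same approach as the paper, whose proof consists solely of the sentence ``The proof is similar to that of Corollary \ref{invariants_corollary}.'' Your worry about the third step is unwarranted: uniqueness of the asymptotic expansion with respect to a fixed sequence is exactly what the paper relies on (item (3) in its list of properties of asymptotic expansions), so recovering the formal series $I_{\text{GW}}^{small}(q,1)$ from $\mathbb{I}_{\text{GW}}$ is immediate.
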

\begin{proof}
The proof is similar to that of Corollary \ref{invariants_corollary}.
\end{proof}

%

\subsection{Massive vacuum solutions in the Fano case}

The formal solutions described in Theorem \ref{formal-solutions} only represent a subset of the set of solutions to the Picard-Fuchs equation at the Landau-Ginzburg point. This occurs because $I_{\text{FJRW}}^{small}$ has smaller rank than $I_{\text{GW}}^{small}$ as a cohomology-valued functions. In addition to the solutions represented by the small FJRW $I$-function,  the number of solutions needed to obtain a complete set is equal to the Fano index of $\mathcal{X}_W$, $r:=-\K=\sum_j w_j-d$. An effort to find the remaining solutions has led us to the following $I$-function:
\begin{equation}\label{mass-I-function}
I_{j,mass}(q):=q^{-\frac{N-2}{2r}} e^{\alpha_j q^{\frac{1}{r}} }\sum_{n=0}^{\infty}\frac{a_{j,n}}{q^{n/r}},
\end{equation}
where  $\alpha_j$ is one the the $r$ solutions of the equation $\left(\frac{\alpha}{r}\right)^r=d^d\prod_{j=1}^N w_j^{-w_j}$, and the coefficients $\{a_{j,n}\}$ can be obtained recursively from Equation (\ref{picard-fuchs-fano}). In the physics literature, the reduction in the dimension of the state space was due to the appearance of certain "massive vacuum" solutions. We believe $I_{j, mass}(q)$ plays the role of a quantum contribution due to these massive vacuum solutions. \begin{theo}\label{massive-solutions}
The functions $I_{j,mass}(q)$ defined in Equation  $(\ref{mass-I-function})$ are formal solutions of the Picard-Fuchs equation $(\ref{picard-fuchs-fano})$ at the Landau-Ginzburg point $q=\infty$. Together with $I_{\text{FJRW}}^{small}$ they form a complete set of solutions.
\end{theo}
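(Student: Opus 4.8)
The plan is to treat $q=\infty$ as a rank-one irregular singular point after the change of variables $u=q^{1/r}$, for which $D_q=\tfrac1r D_u$ with $D_u:=u\frac{d}{du}$. Writing the reduced Picard--Fuchs operator of Equation (\ref{picard-fuchs-fano}) as $\mathcal P=A(D_u)-u^{r}B(D_u)$, where $A(X):=\prod_{j}\prod_{c=0}^{w_j-1}(\tfrac{w_j}{r}X-c)$ has degree $s:=\sum_j w_j$ and $B(X):=\prod_{c=1}^{d}(\tfrac{d}{r}X+c)$ has degree $d$, I would substitute the WKB--type ansatz $I=e^{\alpha u}\,u^{-\beta}\sum_{n\ge0}a_n u^{-n}$ and use the conjugation identity $D_u(e^{\alpha u}v)=e^{\alpha u}(\alpha u+D_u)v$. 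This reduces the problem to solving $A(\alpha u+D_u)v-u^{r}B(\alpha u+D_u)v=0$ as a formal series in $u^{-1}$, which I would attack order by order in $u$.

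First I would read off the leading order. Both $A(\alpha u+D_u)$ and $u^{r}B(\alpha u+D_u)$ contribute at top degree $u^{s}$, with leading coefficients $\prod_j(\tfrac{w_j}{r})^{w_j}\alpha^{s}$ and $(\tfrac{d}{r})^{d}\alpha^{d}$; since $s-d=r$, cancelling these is exactly the equation $(\tfrac{\alpha}{r})^{r}=d^{d}\prod_j w_j^{-w_j}$, which has $r$ distinct nonzero roots $\alpha_1,\dots,\alpha_r$. Because the top symbol $P_0(\alpha):=\prod_j(\tfrac{w_j}{r})^{w_j}\alpha^{s}-(\tfrac{d}{r})^{d}\alpha^{d}$ then vanishes at each $\alpha_j$, the operator $\mathcal Q:=A(\alpha u+D_u)-u^{r}B(\alpha u+D_u)$ has effective top degree $u^{s-1}$ and acts to leading order as $u^{s-1}(\gamma_1 D_u+\gamma_0)$ for constants $\gamma_0,\gamma_1$ depending on $\alpha_j$. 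Applying this to $a_0u^{-\beta}$ and demanding that the $u^{s-1-\beta}$ coefficient vanish forces $\beta=\gamma_0/\gamma_1$; the delicate but routine computation of $\gamma_0,\gamma_1$ yields $\beta=\tfrac{N-2}{2}$, the stated exponent. At order $u^{s-1-\beta-n}$ the coefficient of $a_n$ equals $-\gamma_1 n$, which is nonzero for $n\ge1$ precisely because $\alpha_j$ is a \emph{simple} root (so $\gamma_1\ne0$); hence the recursion determines every $a_{j,n}$ uniquely from $a_{j,0},\dots,a_{j,n-1}$, producing one formal solution $I_{j,mass}$ for each $\alpha_j$. No convergence is claimed, which is consistent with the divergent, Borel--summable nature of these series used elsewhere.

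For completeness I would count linearly independent formal solutions. The $r$ functions $I_{j,mass}$ carry pairwise distinct exponential determining factors $e^{\alpha_j u}$, while the $|\textbf{Nar}|$ solutions of Theorem \ref{formal-solutions} are of pure power type (trivial determining factor); by the standard formal classification at an irregular singular point (Hukuhara--Turrittin--Levelt), solutions attached to distinct determining factors are linearly independent, so these $|\textbf{Nar}|+r$ formal solutions are independent. It then remains to match this count with the order of the irreducible component of $\mathcal P$. Here I would invoke that $I_{\text{GW}}^{small}$ is a complete set of solutions of that irreducible component, so its order equals $\dim H_{\text{CR}}^{amb}(\mathcal X_W;\C)$; the ambient/narrow part of the cohomological correspondence (Theorem \ref{cohomological-correspondence}, Step 2 of its proof, with $G=\langle J_W\rangle$) gives $\dim H_{\text{CR}}^{amb}(\mathcal X_W;\C)=\dim H_{\text{FJRW}}^{nar}(W)+r=|\textbf{Nar}|+r$. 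Thus the independent formal solutions already exhaust the order of the irreducible component, establishing completeness.

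The main obstacle is the second paragraph: carrying out the subleading expansion carefully enough to (i) confirm the exponent is exactly $\tfrac{N-2}{2}$ and (ii) verify $\gamma_1\neq0$, i.e.\ that the recursion never stalls. Both hinge on the simplicity of the roots $\alpha_j$ and on bookkeeping the noncommutation $[D_u,u]=u$ when expanding $(\alpha u+D_u)^{m}$. The completeness count, by contrast, is essentially formal once the dimension identity from Theorem \ref{cohomological-correspondence} is in hand.
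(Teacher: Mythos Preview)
Your approach is essentially the same as the paper's: the paper also performs the change of variables $q=u^r$, substitutes the ansatz $I(u)=e^{\alpha u}\sum_{n\ge0}a_{\alpha,n}u^{-\lambda-n}$, and solves order by order. At top order it obtains $(\alpha/r)^r=d^d\prod_j w_j^{-w_j}$, and at the next order it carries out in full the ``delicate but routine'' computation you defer, arriving at $\lambda=(N-2)/2$; beyond that it simply asserts that the recursion determines the $a_{\alpha,n}$ and that together with $I_{\text{FJRW}}^{small}$ one obtains a complete set.

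Two comments. First, the subleading computation you flag as the main obstacle is exactly what the paper works through explicitly, so your outline would need that step filled in to match. Second, your treatment of completeness is actually more careful than the paper's: you invoke the formal classification (distinct exponential factors give independent formal solutions) together with the dimension identity $\dim H_{\text{CR}}^{amb}(\mathcal{X}_W;\C)=|\textbf{Nar}|+r$ from Theorem~\ref{cohomological-correspondence} to match the order of the irreducible component, whereas the paper simply states completeness without argument. Your observation that $\gamma_1\neq0$ follows from the simplicity of the roots $\alpha_j$ (ensuring the recursion never stalls) is also a point the paper leaves implicit.
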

\begin{proof}
We begin by making the following change of variables: $q=u^r$. Then, Equation (\ref{picard-fuchs-fano}) becomes 
\begin{equation}\label{picard-fuchs-change}
\left[\prod_{j=1}^N\prod_{c=0}^{w_j-1}\left(\frac{w_j}{r}D_u-c\right)-u^r\prod_{c=1}^d\left(\frac{d}{r}D_u+c\right)\right]I(u)=0.
\end{equation} 
We are looking for solutions of the form $I(u)=e^{\alpha u}\sum_{n=0}^{\infty}a_{\alpha,n} u^{-\lambda-n}$, with $\alpha\neq 0$. After plugging this solution into Equation (\ref{picard-fuchs-change}), we obtain a relation for the highest power of $u$ given by
\begin{equation}\label{highest-power}
\left(\frac{\alpha}{r}\right)^{\sum_j w_j}\prod_{j=1}^Nw_j^{w_j}u^{\sum_j w_j}e^{\alpha u}\frac{a_{\alpha,0}}{u^{\lambda}}-u^r\left(\frac{\alpha}{r}\right)^{d}d^d u^d e^{\alpha u}\frac{a_{\alpha,0}}{u^{\lambda}}=0,
\end{equation}
which implies that
\begin{equation}\label{roots}
\left(\frac{\alpha}{r}\right)^r=d^d\prod_{j=1}^N w_j^{-w_j}.
\end{equation}
The relation for the second highest power of $u$ is given by
\[
\begin{split}
&\frac{1}{2}\left(\sum_{j=1}^Nw_j\right)\left(-1+\sum_{j=1}^Nw_j\right)\frac{\alpha^{-1+\sum_{j}w_j}}{r^{\sum_j w_j}}\prod_{j=1}^Nw_j^{w_j}a_{\alpha,0}-\left(\sum_{j=1}^N w_j\right)\frac{\alpha^{-1+\sum_j w_j}}{r^{\sum_j w_j}}\prod_{j=1}^Nw_j^{w_j}a_{\alpha,0}\lambda\\
&-\sum_{i=1}^N\left(\prod_{j\neq i}w_j^{w_j}\right) w_i^{w_i-1}\frac{(w_i-1)w_i}{2}\frac{\alpha^{-1+\sum_j w_j}}{r^{-1+\sum_j w_j}}a_{\alpha,0}-\frac{d(d-1)}{2}\alpha^{d-1}\left(\frac{d}{r}\right)^da_{\alpha,0}\\
&+d\alpha^{d-1}\left(\frac{d}{r}\right)^da_{\alpha,0}\lambda
-\frac{d(d+1)}{2}\alpha^{d-1}\left(\frac{d}{r}\right)^{d-1}a_{\alpha,0}
+\left(\left(\frac{\alpha}{r}\right)^{\sum_j w_j}\prod_{j=1}^Nw_j^{w_j} -\left(\frac{\alpha}{r}\right)^{d}d^d\right) a_{\alpha,1}=0
\end{split}
\]
The last two terms of this relation vanish because of Equation (\ref{highest-power}). Thus, we are left with
\[
\begin{split}
&\frac{1}{2}\left(\sum_{j=1}^Nw_j\right)\left(-1+\sum_{j=1}^Nw_j\right)\frac{\alpha^{-1+\sum_{j}w_j}}{r^{\sum_j w_j}}\prod_{j=1}^Nw_j^{w_j}a_{\alpha,0}-\left(\sum_{j=1}^N w_j\right)\frac{\alpha^{-1+\sum_j w_j}}{r^{\sum_j w_j}}\prod_{j=1}^Nw_j^{w_j}a_{\alpha,0}\lambda\\
&-\sum_{i=1}^N\left(\prod_{j\neq i}w_j^{w_j}\right) w_i^{w_i-1}\frac{(w_i-1)w_i}{2}\frac{\alpha^{-1+\sum_j w_j}}{r^{-1+\sum_j w_j}}a_{\alpha,0}-\frac{d(d-1)}{2}\alpha^{d-1}\left(\frac{d}{r}\right)^da_{\alpha,0}\\
&+d\alpha^{d-1}\left(\frac{d}{r}\right)^da_{\alpha,0}\lambda
-\frac{d(d+1)}{2}\alpha^{d-1}\left(\frac{d}{r}\right)^{d-1}a_{\alpha,0}=0.
\end{split}
\]

Multiplying by $\alpha$ and using Equation (\ref{roots}) yields
\[
\begin{split}
&\frac{1}{2}\left(\sum_{j=1}^Nw_j\right)\left(-1+\sum_{j=1}^Nw_j\right)a_{\alpha,0}-\left(\sum_{j=1}^N w_j\right)a_{\alpha,0}\lambda-r\sum_{i=1}^N\frac{w_i-1}{2}a_{\alpha,0}-\frac{d(d-1)}{2}a_{\alpha,0}\\
&+da_{\alpha,0}\lambda-r\frac{(d+1)}{2}a_{\alpha,0}=0,
\end{split}
\]
which implies
\[
\frac{r}{2}\left(d+\sum_{j=1}^Nw_j\right)-\frac{r}{2}
-\frac{r}{2}\left( -N+\sum_{i=1}^N w_i\right)-r\lambda-r\frac{(d+1)}{2}=0,
\]
from which it follows that 
\[
\lambda=\frac{N-2}{2}.
\]
Thus, Equation (\ref{picard-fuchs-change}) has solutions of the form 
\[
u^{\frac{N-2}{2}}e^{\alpha u}\sum_{n=0}^{\infty} \frac{a_{\alpha ,n}}{u^n},
\]
where a recursion for $\{a_{\alpha, n}\}$ can be obtained from the differential equation, and $\alpha$ satisfies Equation (\ref{roots}). The statement of the theorem follows after making the change of variables $u=q^{1/r}$.
\end{proof}
\begin{cor}
Using the basis described in Theorems $\ref{formal-solutions}$ and $\ref{massive-solutions}$, the formal monodromy matrix at the Landau-Ginzburg point $q=\infty$ is given by 

\begin{equation}
 \left(
    \begin{array}{r@{}c|c@{}l}
  &    \begin{smallmatrix}
  \mbox{\huge D}\rule[-1ex]{0pt}{2ex}

      \end{smallmatrix} & \mbox{\huge0} \\\hline
  &    \mbox{\huge0} &  
       \begin{smallmatrix}\rule{0pt}{2ex}
        0 & & & \cdots&\cdots & & 0 & \lambda_r\\
        \lambda_1 & 0 &  & &\cdots & \cdots& & 0\\
        0 & \lambda_2 & 0 \\
         & 0 & \lambda_3 & & & & &\\
        \vdots&  &  & &\ddots& & & \vdots\\
        & &\\
        & & &&& \lambda_{r-2}& 0\\
        0 & & &   \cdots & & 0 & \lambda_{r-1}& 0
      \end{smallmatrix}    
    \end{array} 
\right)
\end{equation}

where $D$ is diagonal and $\lambda_j\neq0$ for $j=1,\dots,r$. It follows that the monodromy matrix is diagonalizable. 
\end{cor}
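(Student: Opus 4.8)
The plan is to compute the formal monodromy operator---the linear map recording the effect of the substitution $q\mapsto e^{2\pi i}q$ on the space of formal solutions---directly in the basis furnished by Theorems \ref{formal-solutions} and \ref{massive-solutions}. The two families are distinguished by their behaviour at $q=\infty$: the components of $I_{\text{FJRW}}^{small}$ are pure fractional-power series in $q^{-1/d}$, whereas each $I_{j,mass}$ carries the nonzero exponential factor $e^{\alpha_j q^{1/r}}$. Since analytic continuation can neither create nor destroy such an exponential, it must preserve this dichotomy, so the monodromy matrix is block diagonal with respect to the splitting into the two families. I would therefore treat the two blocks in turn.

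For the first block, observe that the component of $I_{\text{FJRW}}^{small}(t=q^{-1/d},-1)$ lying in $\phi_k$ equals $q^{-(k+1)/d}$ times a power series in $q^{-1}$ with integer exponents. Under $q\mapsto e^{2\pi i}q$ the integer-power part is single-valued, so this scalar solution is an eigenvector of the monodromy with eigenvalue $e^{-2\pi i(k+1)/d}$. Hence the monodromy acts diagonally on the span of the narrow solutions, producing the diagonal block $D=\operatorname{diag}\bigl(e^{-2\pi i(k+1)/d}\bigr)_{k\in\textbf{Nar}}$.

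For the massive block, the crucial point is that $q^{1/r}\mapsto e^{2\pi i/r}q^{1/r}$, whence $\alpha_j\mapsto e^{2\pi i/r}\alpha_j$. Because the equation $(\alpha/r)^r=d^d\prod_{j=1}^N w_j^{-w_j}$ is invariant under multiplication by $r$-th roots of unity, $e^{2\pi i/r}\alpha_j$ is again one of its $r$ distinct roots. Ordering the roots so that $\alpha_{j+1}=e^{2\pi i/r}\alpha_j$ (indices mod $r$), the continued solution $I_{j,mass}(e^{2\pi i}q)$ has exponential type $e^{\alpha_{j+1}q^{1/r}}$ and the same leading power $q^{-(N-2)/(2r)}$ computed in Theorem \ref{massive-solutions}. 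By the uniqueness of the formal solution of the irreducible operator with prescribed exponential type and leading exponent, it must coincide with a scalar multiple $\lambda_j I_{j+1,mass}(q)$; comparing leading coefficients shows that $\lambda_j$ equals $e^{-\pi i(N-2)/r}$ times a ratio of nonzero leading coefficients, so $\lambda_j\neq 0$. (Alternatively, $\lambda_j\neq 0$ because monodromy around $q=\infty$ is invertible, so no column of the matrix can vanish.) This cyclic action $I_{j,mass}\mapsto\lambda_j I_{j+1,mass}$ is precisely the displayed block.

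Finally, $D$ is diagonal and the cyclic block $M$ satisfies $M^{r}=\bigl(\prod_{j=1}^r\lambda_j\bigr)\mathrm{Id}$ with $\prod_j\lambda_j\neq 0$; its minimal polynomial therefore divides $x^{r}-\prod_j\lambda_j$, which has $r$ distinct roots, so $M$---and with it the entire monodromy matrix---is diagonalizable. The main obstacle is the third step: one must justify rigorously that continuation sends each massive solution to a scalar multiple of a single other massive solution. This rests on the standard structure theory of formal solutions at an irregular singular point---that, for the irreducible component, each admissible exponential type $e^{\alpha_j q^{1/r}}$ supports a one-dimensional space of formal solutions---together with the bookkeeping needed to label the roots $\alpha_j$ so that the permutation they undergo is the single $r$-cycle appearing in the statement.
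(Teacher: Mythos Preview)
Your proposal is correct and follows the same underlying idea as the paper's proof: both rest on the explicit asymptotic form $q^{-(N-2)/(2r)} e^{\alpha_j q^{1/r}}$ of the massive solutions established in Theorem~\ref{massive-solutions}, from which the effect of $q\mapsto e^{2\pi i}q$ can be read off directly. The paper's argument is a single sentence pointing at this fact, while you have carefully filled in the details---the block-diagonal splitting, the eigenvalues on the narrow block, the cyclic permutation of the $\alpha_j$, and the diagonalizability of the cyclic block via its minimal polynomial---so your version is the fully realized form of the paper's sketch.
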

\begin{proof}
This follows from the fact that the asymptotic behavior of $I_{j,mass}(q)$ is given by $q^{-\frac{N-2}{2r}} e^{\alpha_j q^{\frac{1}{r}} }$ by Theorem \ref{massive-solutions}.
\end{proof}

\subsubsection{Relating the GW $I$-function to the massive vacuum solutions in the Fano case}
We will now study the asymptotics of the Gromov-Witten $I$-function of a degree $d$ hypersurface in projective space $\mathbb{P}^{N-1}$. We will compute the asymptotic behavior under an asymptotic sequence as $q\rightarrow\infty$ of the form $\left\{ \frac{e^{\alpha q^{\frac{1}{r}}}}{q^{\frac{\lambda}{r}}}, \frac{e^{\alpha q^{\frac{1}{r}}}} {q^{\frac{1+\lambda}{r}} },\frac{e^{\alpha q^{\frac{1}{r}}}} {q^{\frac{2+\lambda}{r}}} ,\dots \right\}$. The following result shows that the asymptotic expansion of the GW $I$-function will match one of the vacuum solutions to the Picard-Fuchs equation:

\begin{theo}[Theorem \ref{intro-massive-theorem}]\label{asymptotic-massive}
Let $I_{\text{GW}}(q,1)$ be the Gromov-Witten $I$-function of a degree $d$ hypersurface inside $\mathbb{P}^{N-1}$. Then,
\[
I_{\text{GW}}(q,1)\sim C' \frac{\Gamma(1+P)^N}{\Gamma(1+dP)} q^{-\frac{(N-2)}{2r}} e^{\alpha q^{\frac{1}{r}}} (1+\mathcal{O}\left(q^{-\frac{1}{r} } )\right)\quad\text{as $q\rightarrow +\infty$ along the real axis,}
\]
where $C'$ is a constant, $r=N-d$, and $\alpha>0$ satisfies $\left(\frac{\alpha}{r}\right)^r=d^d$.
\end{theo}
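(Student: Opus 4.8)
The plan is to compute the large-$q$ asymptotics of the Gromov--Witten $I$-function directly by the method of steepest descent, after putting it into a Gamma-function form suited to a saddle-point analysis. Setting all weights $w_j=1$ and $z=1$ in the formula for $I_{\text{GW}}^{small}$ and using $\prod_{b=1}^{m}(aP+b)=\Gamma(1+aP+m)/\Gamma(1+aP)$, I would first record the identity
\[
I_{\text{GW}}(q,1)=q^{P}\,\frac{\Gamma(1+P)^N}{\Gamma(1+dP)}\,S(q),\qquad S(q):=\sum_{n\geq 0}\frac{\Gamma\!\left(1+d(n+P)\right)}{\Gamma\!\left(1+(n+P)\right)^{N}}\,q^{n}.
\]
A ratio test, using $\Gamma(1+d(n+1+P))/\Gamma(1+d(n+P))\sim (dn)^{d}$ and $\Gamma(1+(n+1+P))/\Gamma(1+(n+P))\sim n$, shows the successive coefficient ratio behaves like $d^{d}n^{-r}\to 0$ with $r=N-d>0$; hence $S$ is entire in $q$, and the whole problem reduces to the asymptotics of $S(q)$ as $q\to+\infty$. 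Since the prefactor $\Gamma(1+P)^N/\Gamma(1+dP)$ is already in the desired shape, it suffices to prove $q^{P}S(q)\sim C'\,q^{-\frac{N-2}{2r}}e^{\alpha q^{1/r}}$.

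Next I would produce an integral representation of $S$ amenable to steepest descent --- a Mellin--Barnes-type contour integral whose integrand is a ratio of Gamma functions times $q^{s}$ and which recovers $S$ as a sum of residues at $s=n$, or equivalently the Euler representation obtained from $\Gamma(1+d(n+P))=\int_0^\infty e^{-x}x^{d(n+P)}dx$, which turns $S$ into a Laplace integral of a Wright-type function. On the contour picture the integrand is $\exp\!\big(F(s)\big)$ with $F(s)=\log\Gamma(1+d(s+P))-N\log\Gamma(1+(s+P))+s\log q$. The saddle-point equation $F'(s)=d\psi(1+d(s+P))-N\psi(1+(s+P))+\log q=0$, evaluated with the large-argument expansion $\psi(1+x)\sim\log x$, gives $(s+P)\sim d^{d/r}q^{1/r}$; there are $r$ such saddles, one for each $r$-th root, and along the positive real $q$-axis the dominant one is the real positive saddle $m_*\sim d^{d/r}q^{1/r}$. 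Substituting $\log q=r\log m_*-d\log d$ makes the leading $m\log m$ and $m\log d$ contributions cancel, leaving $F(m_*)\sim r\,m_*$; this reproduces exactly $\alpha q^{1/r}$ with $\alpha=r\,d^{d/r}$, i.e. $(\alpha/r)^{r}=d^{d}=d^{d}\prod_j w_j^{-w_j}$, matching Equation $(\ref{roots})$ of Theorem \ref{massive-solutions}.

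I would then deform the contour to pass through this saddle along the path of steepest descent, bound the remaining arcs, and apply the Gaussian (Laplace) approximation together with a Watson-type expansion to extract the full asymptotic series. The half-integer Stirling terms contribute the power $q^{\frac{1-N}{2r}}$, while the Gaussian fluctuation, governed by $F''(m_*)\sim -r/m_*$ and hence a width $\sqrt{2\pi m_*/r}\sim q^{\frac{1}{2r}}$, contributes $q^{\frac{1}{2r}}$; their product is precisely $q^{-\frac{N-2}{2r}}$. Writing $m=n+P$ shows the saddle carries a factor $q^{-P}$ that cancels the explicit $q^{P}$, so that all $P$-dependence of the leading term is absorbed into the prefactor $\Gamma(1+P)^N/\Gamma(1+dP)$, and $C'$ is the $P$-independent constant assembled from the Gaussian normalization and the Stirling constants. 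Treating $P$ as the nilpotent hyperplane class (expanding each step in powers of $P$) upgrades the scalar computation to the asserted cohomology-valued asymptotic, and identifies the leading behaviour as a nonzero multiple of the massive-vacuum solution $I_{j,mass}$ attached to the real positive root $\alpha_j=\alpha$.

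The main obstacle I expect is the rigorous control of the steepest-descent argument rather than the formal saddle computation: choosing an admissible contour and proving the off-saddle estimates uniformly in the sector, isolating the single dominant saddle among the $r$ competing exponentials $e^{\alpha_j q^{1/r}}$ so that only the $\alpha>0$ one survives along the real axis, and carrying the nilpotent parameter $P$ through the estimates uniformly so as to obtain the stated remainder $O(q^{-1/r})$ rather than merely the leading term. A convenient way to organise the error control is to note that, by Theorem \ref{massive-solutions}, the formal series multiplying $e^{\alpha q^{1/r}}$ is already pinned down as a formal solution of the Picard--Fuchs equation $(\ref{picard-fuchs-fano})$; the steepest-descent estimate then only has to certify that this particular formal solution is the genuine asymptotic expansion of $I_{\text{GW}}$ in the chosen sector, which is exactly the content of the Borel--Laplace/Watson argument of the type already used above.
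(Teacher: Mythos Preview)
Your saddle-point computation is correct and recovers the right exponent $\alpha=r\,d^{d/r}$ and the right power $q^{-(N-2)/(2r)}$, but your route differs from the paper's in an interesting way. The paper does not attack the sum $S(q)$ directly via Stirling's formula on the Gamma ratio. Instead it imports the known asymptotics of the $I$-function of the ambient $\mathbb{P}^{N-1}$ from \cite[Section~6]{Iritani}, namely $I_{\text{GW},\mathbb{P}}(\tau)\sim C\,\Gamma(1+H)^N\tau^{-(N-1)/(2N)}e^{N\tau^{1/N}}$, and then invokes the quantum Lefschetz principle to write the hypersurface $I$-function as a single Laplace-type integral of $i^{\ast}I_{\text{GW},\mathbb{P}}$ against an exponential kernel. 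The remaining work is a standard one-variable steepest-descent analysis of $\int t^{-d(N-1)/(2N)}e^{\lambda(Nt^{d/N}-t)}\,dt$, with the critical point $t_0=d^{N/(N-d)}$ of $f(t)=Nt^{d/N}-t$ read off by elementary calculus.

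What the paper's approach buys is that all of the analytic control you flag as the ``main obstacle'' --- contour choice, off-saddle estimates, isolating the dominant exponential --- is already packaged inside the cited $\mathbb{P}^{N-1}$ result, and the residual Laplace integral is of the textbook form for which the method of steepest descent is immediate. What your approach buys is self-containment: you do not need to quote the projective-space asymptotics or the Lefschetz integral identity, and your argument would in principle extend uniformly to the weighted case, whereas the paper's Laplace-integral trick is written only for $w_j=1$. The cost is exactly the rigorous contour work you identify; your appeal to Theorem~\ref{massive-solutions} to pin down the subleading terms once the leading behaviour is certified is a reasonable way to organise that, but you would still owe a genuine estimate showing the single real saddle dominates along the positive axis.
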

\begin{proof}
Let $i:\mathcal{X}_W\rightarrow \mathbb{P}^{N-1}$ be the embedding of the degree $d$ hypersurface $\mathcal{X}_{W}=\{x_1^d+\dots+x_N^d=0\}$ into projective space. The Gromov-Witten $I$-function for $\mathbb{P}^{N-1}$ is given by
\[
I_{\text{GW},\mathbb{P}}(\tau,1)=\sum_{n=0}^{\infty}\frac{\tau^{n+H}}{\prod_{m=1}^n(P+m)^N},
\]
where $H$ is the hyperplane class in $\mathbb{P}^{N-1}$. The asymptotic expansion of this function along the positive real axis was computed in \cite[Section 6]{Iritani}:
\[
I_{\text{GW},\mathbb{P}}(\tau)\sim C\cdot\Gamma(1+H)^N\tau^{-\frac{N-1}{2N}}e^{N\tau^{\frac{1}{N}}}(1+\mathcal{O}(\tau^{-1/N}))\quad\text{as $\tau\rightarrow\infty$ along the positive real axis,}
\]
where $C$ is a constant.
To compute the asymptotic expansion of the $I$-function of $\mathcal{X}_W$, we use the quantum Lefschetz principle to write the $I$-function as a Laplace-type integral:
\begin{equation}\label{asymptotics}
I_{\text{GW}}(q=u^d,1)=\sum_{n=0}^{\infty} \frac{\prod_{m=1}^{dn} (dP+m)}{\prod_{m=1}^n(P+m)^N}u^{dn+dP}=\frac{1}{\Gamma(1+dP)u}\int_{0}^{\infty} i^{\ast}I_{\mathbb{P}}(\tau^d)e^{\tau/u}d\tau,
\end{equation}
where $P:=i^{\ast}H$.
We can now apply the method of steepest descent to this integral. The leading behavior in the asymptotic expansion will be given by
\[
C\cdot\frac{\Gamma(1+P)^N}{\Gamma(1+dP)u}\int_{\epsilon}^{\infty} \tau^{-\frac{d(N-1)}{2N}}e^{N\tau^{\frac{d}{N}}}e^{\tau/u}d\tau,
\]
where $\epsilon$ is a sufficiently small positive number. After the change of variables $\tau=u^{N/{(N-d)}}t$, the integral becomes
\[
C\cdot\frac{\Gamma(1+P)^N}{\Gamma(1+dP)} u^{-\frac{d(N-3)}{2(N-d)}}\int_{\epsilon}^{\infty} t^{-\frac{d(N-1)}{2N}}e^{u^{d/(N-d)} (Nt^{d/N}-t)}dt.
\]
if we let $\lambda=u^{d/(N-d)}$, the integral now becomes
\[
C\cdot\frac{\Gamma(1+P)^N}{\Gamma(1+dP)} \lambda^{-\frac{(N-3)}{2}}\int_{\epsilon}^{\infty} t^{-\frac{d(N-1)}{2N}}e^{\lambda (Nt^{d/N}-t)}dt.
\]
To find the leading order in the expansion, we need to find the maxima of the function $f(t):=Nt^{d/N}-t$. This function has a critical point at $t_0=d^{N/{(N-d)}}$. Since $d<N$, we have that $f''(t_0)=d\left(\frac{d-N}{N}\right)t_0^{\frac{d}{N}-2}<0$, and therefore, $t_0$ is a true maximum of $f(t)$. This implies that the asymptotic behavior is given by (see, for example, \cite[Equation (3.17)]{Miller}):
\[
C\sqrt{\frac{2\pi Nd^{\frac{2N-d}{N-d}}}{d(N-d)}}d^{-\frac{d(N-1)}{2(N-d)}}\frac{\Gamma(1+P)^N}{\Gamma(1+dP)} \lambda^{-\frac{1}{2}}\lambda^{-\frac{(N-3)}{2}} e^{\lambda d^{d/(N-d)}(N-d)}(1+\mathcal{O}\left(\lambda^{-1})\right).
\]
Therefore, the leading asymptotic behavior of $I_{\text{GW}}(q,1)$ is given by
\[
I_{\text{GW}}(q,1)\sim C' \frac{\Gamma(1+P)^N}{\Gamma(1+dP)} q^{-\frac{(N-2)}{2r}} e^{\alpha q^{\frac{1}{r}}} (1+\mathcal{O}\left(q^{-\frac{1}{r} } )\right)\quad\text{as $q\rightarrow +\infty$ along the real axis,}
\]
where $r=N-d$, and $\alpha>0$ satisfies $\left(\frac{\alpha}{r}\right)^r=d^d$. This completes the proof of the theorem.
\end{proof}
\begin{remark}
After the change of variables $q=u^r$, Theorem \ref{asymptotic-massive} states that 
\[
I_{\text{GW}}(u^r,1)\sim C' \frac{\Gamma(1+P)^N}{\Gamma(1+dP)} u^{-\frac{(N-2)}{2}} e^{\alpha u} (1+\mathcal{O}\left(u^{-1} )\right)\quad\text{as $u\rightarrow\infty$,}
\]
from the region $|\arg(u)|<\pi/2$. This means that after the change of variables $u\rightarrow e^{-\frac{2\pi i k}{r}}u$, for $k=0,\dots,r-1$, we obtain the expansion 
\[
I_{\text{GW}}(u^r,1)\sim C'e^{\frac{2\pi ik(N-2)}{2r}} \frac{\Gamma(1+P)^N}{\Gamma(1+dP)} u^{-\frac{(N-2)}{2}} e^{\alpha e^{-\frac{2\pi i k}{r}}u} (1+\mathcal{O}\left(u^{-1} )\right)\quad\text{as $u\rightarrow\infty$,}
\]
in the region $2\pi k/r -\pi/2<\arg(u)<2\pi k/r +\pi/2$. This shows that all formal solutions described in Theorem \ref{massive-solutions} can be obtained as asymptotic expansions of $I_{\text{GW}}$ along different sectors of the complex plane.
\end{remark}
\begin{remark}
The main content of Theorem \ref{asymptotic-massive} can be found in \cite[Section 5.3]{Iritani}. There, the asymptotic expansion is related to the so called Gamma Conjectures of Galkin-Golyshev-Iritani-Dubrovin.
\end{remark}
\subsection{Example: Degree $3$ del Pezzo surface}
Let $W=x_1^3+x_2^3+x_3^3+x_4^3$, and define $\mathcal{X}_W:=\{W=0\}\subset \mathbb{P}^3$. Then,  $\mathcal{X}_{W}$ is a degree 3 del Pezzo surface with Fano index $r=-\K =1$. The irreducible component of the Picard-Fuchs equation corresponding to $\mathcal{X}_W$ is given by
\[
\left[\left(zq\frac{d}{dq}\right)^3-3q\left(3zq\frac{d}{dq}+2z\right)\left(3zq\frac{d}{dq}+z\right)\right]I(q,z)=0.
\]
A complete set of solutions to this equation around $q=0$ is given by the Gromov-Witten I-function: 
\[
I_{\text{GW}}(q,z):=zq^{P/z}\sum_{n=0}^{\infty}q^{n}\frac{\prod_{m=1}^{3n}(3P+mz)}{\prod_{m=1}^n(P+mz)^4},
\]
where $P^3=0$. The corresponding formal solutions at $q=\infty$ are given by
\[
I_{\text{FJRW}}(t=q^{-\frac{1}{3}},-z):=-z\sum_{l=0}^{\infty}\frac{(-z)^l}{q^{l+\frac{1}{3}}(3l)!}\frac{\Gamma\left(l+\frac{1}{3}\right)^4}{\Gamma\left(\frac{1}{3}\right)^4}\phi_0 +\sum_{l=0}^{\infty}\frac{(-z)^l}{q^{l+\frac{2}{3}}(3l+1)!}\frac{\Gamma\left(l+\frac{2}{3}\right)^4}{\Gamma\left(\frac{2}{3}\right)^4}\phi_1, \text{ and}
\]
\[
I_{mass}(q):=q^{-1}e^{3^3q} \sum_{n=0}^{\infty}\frac{a_n}{q^{n}},
\]
where the coefficients $\{a_n\}$ satisfy the recursion $a_1=\frac{7}{243}a_0$ and $3^6(n+2)a_{n+2}=(54n^2+162n+129)a_{n+1}-(n+1)^3a_n$, for $n\geq 0$.

By choosing an asymptotic sequence of the form $\left\{\frac{q^{\lambda}e^{bq}}{q^n}\right\}_{n=0}^{\infty}$ for the asymptotic expansion as $q\rightarrow\infty$, we obtain 
\[
I_{\text{GW}}(q,1)\sim C \frac{\Gamma(1+P)^4}{\Gamma(1+3P)}q^{-1}e^{3^3q}\left(1+\mathcal{O}(q^{-1})\right),\quad\text{as $q\rightarrow \infty$ along the positive real q-axis.}
\]
If we choose an asymptotic sequence of the form $\{q^{\lambda-n}\}_{n=0}^{\infty}$ for the asymptotic expansion as $q\rightarrow\infty$, we obtain
\[
L_{\text{GW}}\cdot  I_{\text{GW}}(q,1)\sim I_{\text{FJRW}}(t=q^{-\frac{1}{3}},-1),\quad\text{as $q\rightarrow\infty$.}
\]
Thus, by means of Equation (\ref{big-small}), the big $I$-function of FJRW theory is completely determined by the Gromov-Witten $I$-function. This implies that the genus zero FJRW invariants are completely determined by the genus zero Gromov-Witten invariants.
\begin{flushright}
$\square$
\end{flushright}



\singlespace
\normalsize

\end{document}